\documentclass[11pt,reqno]{amsart}
\usepackage[margin=1in]{geometry}
\usepackage{amsmath,amssymb,amsthm,graphicx,amsxtra, setspace}
\usepackage[utf8]{inputenc}
\usepackage{mathrsfs}
\usepackage{hyperref}
\usepackage{upgreek}
\usepackage{mathtools}
\usepackage{xcolor}
\usepackage{stmaryrd}

\usepackage[mathcal]{euscript}
\allowdisplaybreaks

\DeclareMathAlphabet{\mathpzc}{OT1}{pzc}{m}{it}

\usepackage[cyr]{aeguill}

\colorlet{darkblue}{blue!50!black}

\hypersetup{
	colorlinks,%
	citecolor=blue,%
	filecolor=red,%
	linkcolor=red,%
	urlcolor=blue,%
	pdfnewwindow=true,%
	pdfstartview={FitH}
}

\newtheorem{theorem}{Theorem}[section]
\newtheorem{lemma}[theorem]{Lemma}
\newtheorem{proposition}[theorem]{Proposition}

\newtheorem{definition}[theorem]{Definition}

\newtheorem{remark}[theorem]{Remark}

\newtheorem{hypothesis}[theorem]{Hypothesis}

\allowdisplaybreaks

\let\originalleft\left
\let\originalright\right
\renewcommand{\left}{\mathopen{}\mathclose\bgroup\originalleft}
\renewcommand{\right}{\aftergroup\egroup\originalright}

\renewcommand{\d}{\/\mathrm{d}\/}

\def\p{\Phi}

\def\w{\textbf{W}^{\varepsilon}_{{\theta}^{\varepsilon}}}

\def\x{\boldsymbol{x}}
\def\x{\boldsymbol{x}}

\def\p{\boldsymbol{p}}

\def\z{\boldsymbol{z}}
\def\v{\boldsymbol{v}}
\def\w{\boldsymbol{w}}
\def\A{\mathcal{A}}

\def\C{\mathrm{C}}
\def\B{\mathcal{B}}

\def\0{\boldsymbol{0}}
\def\y{\boldsymbol{y}}

\def\x{\boldsymbol{x}}
\def\bz{\boldsymbol{z}}

\def\w{\boldsymbol{w}}
\def\W{\boldsymbol{W}}

\def\H{\mathrm{\mathbb{H}}}

\def\f{\boldsymbol{f}}
\def\V{\mathbb{V}}
\def\U{\boldsymbol{U}}
\def\bu{\boldsymbol{u}}
\def\bv{\boldsymbol{v}}

\newcommand{\R}{\mathbb{R}}

\renewcommand{\d}{\/\mathrm{d}\/}

\makeatletter

\def\@setauthors{%
	\begingroup
	\trivlist
	\centering
	\footnotesize
	\@topsep30\p@
	\advance\@topsep by -\baselineskip
	\item\relax
	\author@andify\authors
	\def\\{\protect\linebreak}%
	{\normalfont\authors}%
	\endtrivlist
	\endgroup
}

\AtEndDocument{}

\makeatother

\begin{document}
	
\title[Optimal Control of 2D NSE with constraints]
{Optimal Control of 2D Navier--Stokes Equations with Constraints of Finite Codimension}

\author[M. T. Mohan]{%
{\normalsize	Manil T. Mohan}\textsuperscript{1*}\\[4pt]
	\textnormal{\small\textsuperscript{1}Department of Mathematics, Indian Institute of Technology Roorkee}\\
	\textnormal{\small Roorkee--247667, India}\\
	\textnormal{\small *Corresponding author}\\
	\textnormal{\small \textit{E-mail:} maniltmohan@ma.iitr.ac.in, maniltmohan@gmail.com}
}

\maketitle
	
%
%
%
%
%

	\begin{abstract}
	We study an optimal control problem for the two-dimensional Navier-Stokes equations projected onto the tangent space of the $\mathbb{H}$-unit sphere manifold $\mathcal{M} := \{\boldsymbol{u} \in \H : \|\boldsymbol{u}\|_{\H}=1\}$. This geometric projection yields a constrained evolution system of finite codimension, ensuring that the state remains on $\mathcal{M}$ for all times. Using a Faedo-Galerkin approximation technique, we establish the global existence and uniqueness of strong solutions to the constrained Navier-Stokes equations with external controls $\boldsymbol{U} \in \mathrm{L}^2(0,T;\H)$, together with uniform energy estimates. The associated optimal control problem, minimizing a quadratic cost functional subject to the manifold constraint, is shown to admit at least one optimal pair $(\boldsymbol{u}^*,\boldsymbol{U}^*)$. We derive the first-order necessary optimality conditions by formulating an
	optimal control system whose state-control pair $(\boldsymbol{u}^*,\boldsymbol{U}^*)$ evolves on the
	tangent bundle $T\mathcal{M}$ of the constraint manifold
	$\mathcal{M}$. In this setting, the control satisfies
	$\boldsymbol{U}^*(t)\in T_{\boldsymbol{u}^*(t)}\mathcal{M}$ for all $t\in[0,T]$, ensuring that the
	dynamics remain confined to the tangent bundle $T\mathcal{M}$. Furthermore, we establish the Fr\'echet differentiability of the control-to-state mapping $\boldsymbol{U}\mapsto \boldsymbol{u}_{\boldsymbol{U}}$ in $\mathrm{C}([0,T];\H)\cap\mathrm{L}^2(0,T;\V)$, with the corresponding linearized state characterized as the unique weak solution of a linearized constrained Navier-Stokes system preserving the tangency condition. These results provide a rigorous analytical foundation for the optimal control of fluid flows constrained to nonlinear manifolds.
	\end{abstract}
	
{\small	\textit{Key words:} Optimal control problem, first order necessary conditions of optimality, Navier-Stokes equations, constraints of finite codimension, geometric PDE. }
	
{\small		\emph{Mathematics Subject Classification (2020):} 		35Q30, 35Q35, 76D03, 49K20. }

	\section{Introduction}\label{sec1}\setcounter{equation}{0}
	
	Geometric partial differential equations (PDEs) form a rich interface between
	differential geometry and analysis, describing the evolution of functions,
	curves, or surfaces under laws determined by intrinsic geometric quantities.
	They provide a unified framework for understanding flows and fields defined on
	manifolds, where the dynamics reflect both analytic and geometric structures.
	A natural bridge arises between this theory and constrained Navier-Stokes
	equations (NSE), in which the fluid motion follows classical Navier-Stokes dynamics
	but is additionally restricted by geometric or physical constraints, for
	example, conservation of kinetic energy, evolution confined to a manifold, or
	the preservation of certain invariants. Such constraints endow the fluid system
	with an inherent geometric character, bringing it closer to the realm of
	geometric flows. This correspondence reveals a unifying perspective: both
	geometric PDEs and constrained fluid equations describe nonlinear evolutions
	governed not only by analytic and physical principles but also by the geometry
	of the underlying configuration space, thereby calling for advanced tools from
	geometric analysis and variational methods (\cite{DH-81}).

	\subsection{Literature survey} 
	As discussed in \cite{ZB+GD+MM-18}, the present work is motivated in part by
	two major research directions concerning evolution equations subject to
	constraints. The first line of inspiration originates from the study of
	Caglioti, et. al. \cite{EC+MP+FR-09}, who analyzed a modified form of
	the two-dimensional NSE in vorticity formulation.
	Their model incorporates two integral constraints, the conservation of kinetic
	energy $E(\omega)$ and the moment of inertia $I(\omega)$, leading to the
	evolution equation
	$$ \frac{\partial \omega}{\partial t} + \boldsymbol{u} \cdot \nabla \omega = \nu \, \mathrm{div}\left[ \omega \nabla\left( \log \omega - b \psi - a \frac{|x|^2}{2} \right) \right], $$
	where $\omega = \mathrm{curl}(\boldsymbol{u})$ denotes the vorticity,
	$\boldsymbol{u}$ is the velocity field, $\psi = (-\Delta)^{-1}\omega$ is the
	stream function, and $a(\omega)$, $b(\omega)$ are Lagrange multipliers enforcing
	the constraints. They established global existence and uniqueness of classical
	solutions for a suitable class of initial data and demonstrated that, as
	$t \to \infty$, the dynamics converge toward the unique stationary solution of
	a corresponding constrained variational problem.

	The second source of motivation stems from studies on heat flows subject to
	nonlinear constraints. In particular, Rybka \cite{PR-06} investigated the
	evolution of the heat equation on a nonlinear manifold characterized by a finite
	number of integral constraints on powers of the solution,
	\begin{align}\label{eqn-1}
		\mathcal{M}
		= \left\{
		u \in L^2(\Omega) \cap C(\Omega) :
		\int_{\Omega} u^k(x)\,dx = C_k, \ k = 1, \dots, N
		\right\},
	\end{align}
	where $\Omega \subset \mathbb{R}^2$ is a smooth bounded domain. He
	considered a projected version of the heat equation whose dynamics are confined
	to the tangent space of $\mathcal{M}$, proving the global existence and
	uniqueness of solutions together with their convergence to equilibrium as
	$t \to \infty$. In a related direction, Caffarelli and Lin \cite{LC+FL-09}
	developed a general framework for heat flows with geometric constraints. They
	first established global well-posedness and energy conservation for such
	constrained parabolic systems and subsequently extended their results to
	singularly perturbed nonlocal flows. Their principal finding showed that these
	perturbed solutions converge strongly to weak solutions of a limiting nonlocal
	heat equation taking values in a singular target manifold.
	
	Building on these developments, Ma and Cheng
	\cite{LM+LC-09, LM+LC-13} investigated nonlocal heat flows that preserve the
	$L^2$-norm on compact Riemannian manifolds. Their analysis encompassed the
	global existence, stability, and long-time behavior of positive solutions, as
	well as the derivation of sharp gradient estimates. Moreover, for porous
	medium-type equations, they established convergence of solutions toward
	eigenfunctions of the Laplace-Beltrami operator under suitable geometric
	conditions, employing analytical techniques such as Sobolev embeddings and
	Moser iteration.
	
	More recent progress has been achieved by Brze\'zniak and Hussain
	\cite{ZB+JH-24}, who analyzed a nonlinear heat equation with a polynomial
	nonlinearity constrained to the unit sphere in a Hilbert space. By projecting
	the dynamics onto the tangent space and employing semigroup theory together
	with fixed-point techniques, they proved the existence of global strong
	solutions that remain confined to the manifold. Subsequently, Hussain
	\cite{JH-23} extended this framework to general Hilbert manifolds, establishing
	well-posedness of constrained heat equations through Faedo-Galerkin
	approximations and compactness methods. Both studies required restrictions on
	the growth of the nonlinearity, dictated by Sobolev embedding conditions, a
	limitation that the recent work \cite{AB+ZB+MTM-2025} aims to overcome

Further important progress was made by Antonelli et al. \cite{PA+PC+BS-24}, who established the well-posedness of a norm-preserving nonlinear heat equation on both bounded and unbounded domains in dimensions up to four. In particular, they demonstrated convergence of positive solutions toward the ground state when the problem is posed on a ball. Subsequently, Shakarov \cite{BS-25} investigated related heat equations constrained to the $L^2$-sphere, proving global well-posedness in bounded domains as well as in the whole space by means of Schauder fixed-point and contraction mapping techniques, albeit under restrictions on the nonlinearity exponent. More recently, Bawalia et al. \cite{AB+ZB+MTM-2025} proved global existence and uniqueness of strong solutions taking values in $L^p \cap H_0^1$ for $2 \leq p < \infty$ to a nonlinear heat equation with constraints on bounded domains in any spatial dimension $d \geq 1$. They also established the invariance of the unit $L^2$-sphere manifold and demonstrated convergence of positive solutions to the ground state in the presence of damping nonlinearities. 

The paper \cite{BW+CY-2013} reviewed recent advances in the mathematical theory and numerical methods for Bose-Einstein condensation (BEC), formulated within the framework of the Gross-Pitaevskii equation (GPE).
Brze\'zniak and Cerrai \cite{ZB+SC-25} studied stochastic damped wave equations whose dynamics are restricted to the unit sphere of a Hilbert space, establishing both the well-posedness of the system and its asymptotic properties as the mass parameter tends to zero. More recently, Cerrai and Xie \cite{SC+MX-25} analyzed the Smoluchowski-Kramers approximation for such constrained stochastic wave equations, proving that in the vanishing-mass limit, the solutions converge to those of the corresponding constrained stochastic parabolic equation.

	Inspired by the seminal works \cite{LC+FL-09, EC+MP+FR-09, PR-06}, Brze\'zniak et al. \cite{ZB+GD+MM-18} investigated the NSE with constrained forcing on the domains $\mathbb{T}^2$ and $\mathbb{R}^2$. They established global existence and uniqueness of energy-preserving solutions and demonstrated convergence to solutions of the Euler equations in the vanishing viscosity limit. In a related development, \cite{PC+HF-25} analyzed the 2D NSE  on the torus $\mathbb{T}^2$ incorporating a bilinear control term that enforces evolution within prescribed hyperplanes through a nonlinear, nonlocal perturbation, proving the system’s well-posedness. The study of constrained systems has also been extended to the stochastic framework. In this context, Brze\'zniak and Hussain \cite{ZB+JH-20} initiated the analysis of stochastic nonlinear heat equations with Stratonovich noise under an $L^2$-norm constraint, establishing the existence and uniqueness of mild solutions in two dimensions. Subsequently, Brze\'zniak and Dhariwal \cite{ZB+GD-21} extended these ideas to the stochastic NSE with multiplicative Gaussian noise, proving the existence of martingale solutions and pathwise uniqueness via Yamada-Watanabe-type arguments.

		The regulation of fluid motion and turbulence within a prescribed domain, starting from given initial conditions and influenced through body forces, boundary controls, or temperature variations, has long been a fundamental problem in fluid mechanics (see, e.g., \cite{AF+TR-90,FAV-2000,MDG-2003,LJL-1971,SSS-1998,TF-2010}). With major advances in the theory of optimal control for infinite-dimensional nonlinear systems and partial differential equations, the study of optimal control problems governed by the NSE has expanded considerably over the past few decades. Numerous variants of these problems have been analyzed in the literature, including those presented in \cite{BT+TR+ZM-2000,CE+KK-2019,DJC+GR-2008,DM+IK-1994,FHO+SSS-1992,FHO+SSS-1994,GMD+MS-1999,HM+KK-2001,TF+WD-2006,WG-2002,WG+WL-2003} and the references therein.

		Barbu and Sritharan \cite{Barbu+Sritharan-2001} (see \cite{Gautam+Kinra+Mohan-2024}) investigated the flow-invariance problem for the controlled Navier-Stokes equations, developing feedback controllers designed to preserve prescribed physical constraint sets, including level sets associated with invariants of the corresponding inviscid flow. Their analysis is based on \(m\)-accretive operator theory, nonsmooth analysis, and nonlinear semigroup methods, and the resulting controlled dynamics are formulated as a Navier-Stokes inclusion involving a multivalued feedback term. In contrast, the present work addresses an optimal control problem for the two-dimensional Navier–Stokes equations constrained to the nonlinear \(L^2\)-sphere. We formulate the constrained dynamics through the orthogonal projection onto the tangent space of the sphere, establish global existence and uniqueness of strong solutions, prove the existence of an optimal control, and derive first-order necessary optimality conditions. We further establish the Fréchet differentiability of the control-to-state mapping and analyze the associated linearized constrained system. Thus, while both works involve constraint preservation in controlled Navier-Stokes dynamics, the objectives and analytical frameworks are fundamentally different.
		
		\subsection{Aims, difficulties, approaches and novelties} 
		The primary aim of this work is to develop a rigorous analytical and geometric framework for the optimal control of the two-dimensional NSE constrained to evolve on a finite-codimensional manifold in the underlying Hilbert space $\H$ (see Subsection \ref{sec2.1}). Specifically, the state variable $\bu(t),\ t\in[0,T]$ (velocity field) is constrained to lie on the manifold
		$$
		\mathcal{M} = \{\bu \in \H : \|\bu\|_{\H} = 1\},
		$$
		which represents the preservation of kinetic energy throughout the evolution. By projecting the classical Navier-Stokes dynamics onto the tangent space $T_{\bu}\mathcal{M}$, we obtain a constrained system (see \eqref{eqn-constrained-NSE-N}) that naturally evolves on the \emph{tangent bundle}  \(T\mathcal M=\bigsqcup_{\bu\in\mathcal M}T_{\bu}\mathcal M,\) which 
		is a fixed manifold (the disjoint union of the tangent spaces). The main objectives of this paper are to establish the existence and uniqueness of strong  solutions to the constrained Navier-Stokes system (Theorem \ref{thm-exis-unique}), to prove the existence of an optimal control (Theorem \ref{thm-exis-optimal}) minimizing a suitable quadratic cost functional (see \eqref{cost}), and to derive first-order necessary optimality conditions in terms of a coupled state-adjoint-control system that evolves consistently within the tangent bundle (Theorem \ref{main}).
		
		The study of such constrained systems introduces several analytical and geometric challenges. The nonlinear constraint $\|\bu(t)\|_{\H}=1$ defines a nonconvex manifold, and consequently, the feasible set of states is not a linear subspace, rendering standard convex-analytic techniques inapplicable. The geometry of the problem is further complicated by the fact that the projection operator $\pi_{\bu(t)}$ depends nonlinearly on the evolving state, coupling the dynamics and geometry in a highly nontrivial way. Ensuring that both the control and linearized perturbations remain tangent to $\mathcal{M}$ at all times (Theorem \ref{linear} and Proposition \ref{lem3.8}) requires delicate consistency arguments that go beyond classical approaches for unconstrained fluid control problems.
		
		To address these difficulties, we employ a combination of functional analytic, geometric, and variational techniques. The \emph{Faedo-Galerkin approximation}, together with compactness tools such as the \emph{Banach-Alaoglu} and \emph{Aubin-Lions-Simon} theorems, as well as the absolute continuity result provided by the \emph{Lions-Magenes} lemma, is employed to construct global strong solutions that remain on the constraint manifold $\mathcal{M}$ (Theorem \ref{thm-exis-unique}). This approach stands in contrast to the method used in \cite{ZB+GD+MM-18}, where the authors applied semigroup theory and fixed-point arguments to establish the corresponding results for the uncontrolled two-dimensional constrained NSE. The \emph{adjoint system} is derived using variational differentiation and projection arguments consistent with the manifold structure (Theorem \ref{thm3.4}).  The resulting optimal control is shown to be the \emph{projection of the adjoint variable} onto the tangent space,
		$$\U^*(t) = -\pi_{\bu^*(t)}(\p(t)) = -\p(t) + (\p(t),\bu^*(t))\bu^*(t),\ t\in[0,T],$$
		ensuring that the state-control pair $(\bu^*(t), \U^*(t))$ evolves entirely within the tangent bundle $T\mathcal{M}$ (Theorem \ref{main}). This provides a natural geometric interpretation of the constrained dynamics: the adjoint variable encodes sensitivity information in the ambient space, while the optimal control acts as its tangential projection that respects the manifold constraint. The novelty of this work lies in formulating and analyzing a Navier-Stokes optimal control problem on a nonlinear manifold, integrating geometric PDE methods with optimal control theory, and revealing how the intrinsic geometry of the constraint governs both the analytical structure and the optimality system of the problem.

	Although the first-order necessary optimality conditions were formally derived in \cite{SS-2023}, the author defined the admissible control set as
	$\mathscr{U}_{ad}:=\left\{\U\in T_{\bu}\mathcal{M}:\|\U\|_{\V}\ \text{ is bounded} \right\}.$
	However, the tangent space $T_{\bu}\mathcal{M}$ depends on the state $\bu(t)$ and therefore varies with time. As a result, $\mathscr{U}_{\mathrm{ad}}$ becomes \emph{state-dependent}, meaning it is not a fixed linear space. Consequently, the admissible control set changes as the state evolves, rendering it \emph{nonlinear} and \emph{nonconvex}. In contrast, standard optimal control theory for partial differential equations requires the admissible control set $\mathscr{U}_{\mathrm{ad}}$ to be a fixed, convex, and closed subset of the control space, independent of the state, to ensure the validity of existence and differentiability results. The choice $\U \in T_{\bu}\mathcal{M}$ therefore breaks these foundational assumptions: the cost functional $\mathscr{J}(\bu, \U)$ is no longer convex in $\U$, standard existence proofs for optimal controls fail, and the differentiability of the control-to-state mapping cannot be guaranteed. To overcome these issues, we instead define the control space as	$	\mathscr{U}_{\mathrm{ad}} = \mathrm{L}^2(0,T;\mathbb{H}),	$	which is fixed, convex, and independent of the state, and then project each $\U \in \mathscr{U}_{\mathrm{ad}}$ onto the tangent space $T_{\bu}\mathcal{M}$ within the dynamics. This approach preserves the energy constraint while maintaining the mathematical structure necessary for standard optimal control analysis, such as the existence of optimal controls, differentiability of the control-to-state mapping, and the derivation of adjoint equations. This formulation allows the application of the well-established analytical framework for existence, differentiability, and adjoint systems in optimal control theory. To the best of our knowledge, this is the first work to provide a systematic and mathematically consistent treatment of the optimal control problem for the constrained 2D NSE.

		\subsection{Organization of the paper} The remainder of the paper is organized as follows. In Section \ref{sec2}, we introduce the necessary functional framework that forms the basis for deriving the main results. In this section, we also establish the existence and uniqueness of strong solutions to the forced two-dimensional NSE subject to finite-codimensional constraints (Theorem \ref{thm-exis-unique}). Section \ref{sec3} is devoted to the existence of an optimal control that minimizes a suitable cost functional (Theorem \ref{thm-exis-optimal}). In the same section, we also prove the existence and uniqueness of weak solutions to the linearized and adjoint systems (Theorems \ref{linear} and \ref{thm3.4}). Section \ref{sec4} focuses on  the first-order necessary optimality conditions for the optimal control problem \ref{control problem} (Theorem \ref{main}). To establish Theorem \ref{main}, we further demonstrate the Fr\'echet differentiability of the control-to-state mapping in the same section (Lemma \ref{lem3.7} and Proposition \ref{lem3.8}).

		\section{Existence and Uniqueness}\label{sec2}\setcounter{equation}{0}
	The purpose of this section is to establish the existence and uniqueness of strong solutions to the forced two-dimensional NSE subject to finite-codimensional constraints. To this end, we first introduce the necessary functional framework that will serve as the foundation for deriving the main results of the paper.
	
	\subsection{Functional setting}\label{sec2.1}
Let $\mathbb{T}^2 $ denote the two-dimensional torus, which can be viewed as a bounded periodic domain. For $p \in [1, \infty] $  and $ k \in \mathbb{N} $, we define the vector-valued Lebesgue and Sobolev spaces on $ \mathbb{T}^2 $ as follows:
$
\mathbb{L}^p(\mathbb{T}^2) := \mathrm{L}^p(\mathbb{T}^2; \mathbb{R}^2), \ 
\mathbb{W}^{k,p}(\mathbb{T}^2) := \mathrm{W}^{k,p}(\mathbb{T}^2; \mathbb{R}^2).
$
The standard inner product on $\mathbb{L}^2(\mathbb{T}^2) $ is denoted by
$(\bu, \bv) = \int_{\mathbb{T}^2} \bu(x) \cdot \bv(x), dx, \  \bu, \bv \in \mathbb{L}^2(\mathbb{T}^2),$
and the associated norm is written as $ \|\bu\|_{\mathbb{L}^2} .$
We further set
$\mathbb{H}^k := \mathbb{W}^{k,2}(\mathbb{T}^2; \mathbb{R}^2),$
and denote its norm by $\|\cdot\|_{\mathbb{H}^k} $.
In particular, the scalar product on $ \mathbb{H}^1(\mathbb{T}^2) $ is given by
$(\bu, \bv)_{\mathbb{H}^1} = (\bu, \bv) + (\nabla \bu, \nabla \bv), \  \bu, \bv \in \mathbb{H}^1(\mathbb{T}^2),$
which induces the norm
$\|\bu\|_{\mathbb{H}^1} = \left(\|\bu\|_{\mathbb{L}^2}^2 + \|\nabla \bu|_{\mathbb{L}^2}^2 \right)^{1/2}, \  \bu \in \mathbb{H}^1(\mathbb{T}^2).$
We now introduce the following function spaces:

	\begin{align*}
		\mathbb{L}^2_0:=\left\{\bu\in\mathbb{L}^2(\mathbb{T}^2):\int_{\mathbb{T}^2}\bu(x)\d x=\boldsymbol{0}\right\},\ 
		\mathbb{H}:=\left\{\bu\in\mathbb{L}^2_0:\mathrm{div\ }\bu=0\right\},\ 
		\mathbb{V}:=\mathbb{H}^1\cap\mathbb{H}. 
	\end{align*}
	We equip the space $\mathbb{H}$  with the standard $ \mathbb{L}^2 $-inner product
$	(\bu, \bv) = \int_{\mathbb{T}^2} \bu(x) \cdot \bv(x), dx,$
	and the corresponding norm $\|\bu\|_{\mathbb{H}} = \|\bu\|_{\mathbb{L}^2}$, for all $ \bu, \bv \in \mathbb{H} $. On the space $\mathbb{V} $, we define the inner product
$	(\!(\bu, \bv)\!) := (\nabla \bu, \nabla \bv),$
	and the associated norm
$	\|\bu\|_{\mathbb{V}} := \|\nabla \bu\|_{\mathbb{H}}, \  \bu, \bv \in \mathbb{V}.$
	Furthermore, by the Poincar\'e-Wirtinger inequality on the two-dimensional torus,
$	\|\bu\|_{\mathbb{H}} \leq \frac{1}{\sqrt{\lambda_1}} \|\nabla \bu\|_{\mathbb{H}}, \ \text{ for all }\ \bu \in \mathbb{V},$
	where $\lambda_1 > 0 $ denotes the smallest (nonzero) eigenvalue of the Stokes operator (see Section \ref{sec2.2} below).
	Consequently, the norm $ \|\cdot\|_{\mathbb{V}} $ is equivalent  to the standard $\mathbb{H}^1$-norm on $\mathbb{V}$.

	\subsection{Linear operator}\label{sec2.2}
	Let $\mathrm{P}_{\H} : \mathbb{L}^2(\mathbb{T}^2) \to \H$ denote the \emph{Leray-Helmholtz projection operator} (see, e.g., \cite[Chapter 2]{RJC+RJL+SW-2016}), which projects square-integrable vector fields onto the subspace of divergence-free ones.
	We define the \emph{Stokes operator} $\A : \mathrm{D}(\A) \to \H$ by
	\begin{align*} 
		\A\bu=-\mathrm{P}_{\H}\Delta\bu, \ \bu\in \mathrm{D}(\A)=\H\cap\mathbb{H}^2(\mathbb{T}^2). 
		\end{align*}
	It is well known that $\A$ is a self-adjoint, nonnegative operator on $\H$, and that it commutes with the projection $\mathrm{P}_{\H}$.
	Moreover, the domain of its square root coincides with the space $\V$, and for all $\bu \in \V$,
\begin{align*}\mbox{ $\mathrm{D}(\A^{\frac{1}{2}})=\V$ \ and\ $\langle\A\bu,\bu\rangle=\|\nabla\bu\|_{\H}^2=\|\A^{\frac{1}{2}}\bu\|_{\H}^2,$ \ for \ $\bu\in\V$. } \end{align*}
	Since the torus $\mathbb{T}^2$ is compact, the inverse $\A^{-1}$ is a compact operator on $\H$. Consequently, the spectrum of $\A$ consists of a discrete sequence of eigenvalues of finite multiplicity accumulating only at $+\infty$.
	Denoting these eigenvalues by
	$0<\lambda_1\leq\lambda_2\leq \ldots,\ \lambda_n\to+\infty$,
	we may choose the corresponding eigenfunctions $\{\w_n\}_{n=1}^{\infty}$ to form an orthonormal basis of $\H$. Every $\bu \in \H$ then admits the Fourier-type expansion
	$\bu=\sum_{k=1}^{\infty}(\bu,\w_k)\w_k$
	and the action of $\A$ is diagonal in this basis:
	$\A\bu = \sum_{k=1}^{\infty}\lambda_k(\bu,\w_k)\w_k$ for all $\bu\in\mathrm{D}(\A)$.
	Hence,
\begin{align*} \|\nabla\bu\|_{\H}^2= \langle\A\bu,\bu\rangle =\sum_{k=1}^{\infty}\lambda_k|(\bu,\w_k)|^2\geq\lambda_1 \sum_{k=1}^{\infty}|(\bu,\w_k)|^2=\lambda_1\|\bu\|_{\H}^2, \end{align*}
	which yields the \emph{Poincar\'e-Wirtinger inequality}.

		\subsection{Bilinear operator}
	We define the \emph{trilinear form} $b(\cdot,\cdot,\cdot):\V\times\V\times\V\to\R$ as follows:  $$b(\bu,\bv,\w)=\int_{\mathbb{T}^2}(\bu(x)\cdot\nabla)\bv(x)\cdot\w(x)\d x=\sum_{i,j=1}^2\int_{\mathbb{T}^2}\bu_i(x)\frac{\partial \bv_j(x)}{\partial x_i}\w_j(x)\d x.$$ If $\bu, \bv \in \V$ are such that the mapping $b(\bu, \bv, \cdot)$ defines a continuous linear functional on $\V$, we denote the corresponding element of the dual space $\V^{\prime}$ by $\mathcal{B}(\bu, \bv)$.
	In particular, we set $ \mathcal{B}(\bu) = \mathcal{B}(\bu, \bu) := \mathcal{P}\big[(\bu \cdot \nabla)\bu\big],$ for $\bu\in D(\mathcal{A}).$
	By performing integration by parts, one obtains the following relation:
	\begin{equation*}
		\left\{
		\begin{aligned}
			b(\bu,\bv,\w) &=  -b(\bu,\w,\bv),\ &&\text{ for all }\ \bu,\bv,\w\in \V,\\
			b(\bu,\bv,\bv) &= 0,\ &&\text{ for all }\ \bu,\bv \in\V.
		\end{aligned}
		\right.\end{equation*}
	By  applying H\"older's, Ladyzhenskaya's (\cite[Lemma 1, Chapter 1]{OAL-1969}) and Poincar\'e's inequalities, we estimate 
	\begin{align*}
		|b(\bu,\bv,\w)|=|b(\bu,\w,\bv)|&\leq \|\bu\|_{\mathbb{L}^4}\|\bv\|_{\mathbb{L}^4}\|\w\|_{\V}\leq\sqrt{2}\|\bu\|_{\H}^{1/2}\|\bu\|_{\V}^{1/2}\|\bv\|_{\H}^{1/2}\|\bv\|_{\V}^{1/2}\|\w\|_{\V}\nonumber\\&\leq\sqrt{\frac{2}{\lambda_1}}\|\bu\|_{\V}\|\bv\|_{\V}\|\w\|_{\V},
	\end{align*}
	for all $\bu,\bv,\w\in\V$ and $\w\in\V$. Therefore, we deduce 
	\begin{align}\label{2p9}
		\|\mathcal{B}(\bu,\bv)\|_{\V'}\leq \sqrt{\frac{2}{\lambda_1}}\|\bu\|_{\V}\|\bv\|_{\V}.
	\end{align}
	If $\bu\in\V$, $\bv\in\mathrm{D}(\A)$ and $\w\in\H$, then applications of H\"older's and Ladyzhenskaya's inequalities yield 
	\begin{align}
		|b(\bu,\bv,\w)|\leq \|\bu\|_{\mathbb{L}^4}\|\nabla\bv\|_{\mathbb{L}^4}\|\w\|_{\H}\leq C\|\bu\|_{\H}^{1/2}\|\bu\|_{\V}^{1/2}\|\nabla\bv\|_{\H}^{1/2}\|\A\bv\|_{\H}^{1/2}\|\w\|_{\H}. 
	\end{align}
	Consequently, the trilinear form $b : \V \times \V \times \V \to \R$  admits a unique extension to a bounded trilinear mapping
$	b : \V \times \mathrm{D}(\A) \times \H \to \R,$
	which we continue to denote by the same symbol.
	Correspondingly, the operator $\B$ can be uniquely extended to a bounded linear operator
	$\B : \V \times \mathrm{D}(\A) \to \H.$
	Furthermore, on the torus $\mathbb{T}^2$, for every $\bu \in \mathrm{D}(\A)$, the following result holds (see \cite[Lemma 3.1]{TR-1995}):
	\begin{align}
		(\B(\bu,\bu),\A\bu)=0,
	\end{align}
	which plays a crucial role in our analysis.

	\subsection{Abstract formulation}
The two-dimensional Navier-Stokes equations defined on the torus $\mathbb{T}^2$
can be written as follows:
	\begin{equation}\label{eqn-2DNSE-normal}
		\left\{
		\begin{aligned}
			&\frac{\partial \bu(x,t)}{\partial t}-\nu\Delta \bu(x,t)+(\bu(x,t)\cdot\nabla)\bu(x,t)+\nabla p(x,t)=\f(x,t),\ (x,t)\in\mathbb{T}^2\times(0,T),\\
			&\nabla\cdot \bu(x,t)=0,\ (x,t)\in\mathbb{T}^2\times[0,T],\\
			&\bu(x,0)=\bu_0(x),\ x\in\mathbb{T}^2,
		\end{aligned}
		\right.
	\end{equation}
	where $\bu:\mathbb{T}^2\times(0,\infty)\to\R^2$ represents the velocity field, $p:\mathbb{T}^2\times(0,\infty)\to\R$ denotes the pressure, $\U:\mathbb{T}^2\times(0,\infty)\to\R^2$ is an external forcing and $\nu>0$  is the viscosity of the fluid. Moreover, $\bu(\cdot,\cdot)$, $p(\cdot,\cdot)$ and $\f(\cdot,\cdot)$ satisfy the following periodic conditions:
	\begin{align}\label{2}
		\bu(x+\mathrm{L}e_{i},\cdot) = \bu(x,\cdot), \ p(x+\mathrm{L}e_{i},\cdot) = p(x,\cdot) \ \text{ and } \ \f(x+\mathrm{L}e_{i},\cdot) = \f(x,\cdot),
	\end{align}
	for every $x\in\R^{d}$ and $i=1,\ldots,d,$ where $\{e_{1},\dots,e_{d}\}$ is the canonical basis of $\R^{d}.$  The Navier-Stokes equations \eqref{eqn-2DNSE-normal} projected on divergence free vector field is given by
		\begin{equation}\label{eqn-2DNSE}
		\left\{
		\begin{aligned}
			\frac{\d\bu(t)}{\d t}&=-\nu\A\bu(t)-\B(\bu(t),\bu(t))+\U(t),\ t>0, \\
			\bu(0)&=\bu_0,
		\end{aligned}
		\right. 
	\end{equation} 
where $\bu_0\in\H$ and $\U:=\mathrm{P}_{\H}\f\in\mathrm{L}^2(0,T;\H)$. We introduce the set
\begin{align}
\mathcal{M}:=\left\{\bu\in\H:\|\bu\|_{\H}=1\right\},
\end{align}	
that is, the collection of all elements in $\H$ having unit norm. The tangent space to $\mathcal{M}$ at a point $\bu\in\mathcal{M}$ is given by $$T_{\bu}\mathcal{M}:=\left\{\bv\in\mathbb{H}:(\bv,\bu)=0\right\}.$$ In other words, $T_{\bu}\mathcal{M}$ consists of all vectors in $\H$ that are orthogonal to $\bu$. We introduce the linear operator $\pi_{\bu} : \H \to T_{\bu}\mathcal{M}$ defined by
$$
\pi_{\bu}(\bv) = \bv - (\bv, \bu) \bu.
$$
This operator represents the orthogonal projection of $\H$ onto the tangent space $T_{\bu}\mathcal{M}$.

For an initial condition $\bu_0 \in \mathcal{M}$, we seek solutions that remain on the manifold $\mathcal{M}$ for all time. To ensure this constraint, we substitute the term $-\nu \A \bu - \B(\bu, \bu) + \U$ with its orthogonal projection onto the tangent space $T_{\bu}\mathcal{M}$, that is, 
\begin{align}
	\pi_{\bu}(-\nu\A\bu-\B(\bu,\bu)+\U)&=-\nu\A\bu-\B(\bu,\bu)+\U-(-\nu\A\bu-\B(\bu,\bu)+\U,\bu)\bu\nonumber\\&=-\nu\A\bu+\nu\|\nabla\bu\|_{\H}^2\bu-\B(\bu,\bu)+\U-(\U,\bu)\bu,
\end{align}
since $\langle\B(\bu,\bu),\bu\rangle =0$ and $\bu\in\mathcal{M}$, $\B(\bu,\bu)\in T_{\bu}\mathcal{M}$. By projecting the Navier-Stokes equations \eqref{eqn-2DNSE} onto the tangent space $T_{\bu}\mathcal{M}$, we derive the two-dimensional Navier-Stokes system subject to finite-codimensional constraints, expressed as follows:
	\begin{equation}\label{eqn-constrained-NSE-N}
		\left\{
		\begin{aligned}
			\frac{\d\bu(t)}{\d t}&=-\nu\A\bu(t)+\nu\|\nabla\bu(t)\|_{\H}^2\bu(t)-\B(\bu(t),\bu(t))+\U(t)-(\U(t),\bu(t))\bu(t),\ t\in(0,T), \\
			\bu(0)&=\bu_0,
		\end{aligned}
		\right. 
	\end{equation} 
	where $\bu_0\in\V\cap\mathcal{M}$ and $\U\in\mathrm{L}^2(0,T;\H)$ is the control acting on the system.  The dynamics of the state $\bu(t)$ in \eqref{eqn-constrained-NSE-N} are modified by the term $-(\U(t),\bu(t))\bu(t),$ which acts as a \emph{geometric Lagrange multiplier} enforcing the motion to remain tangent to $\mathcal{M}$. The main advantage of formulating the problem \eqref{eqn-constrained-NSE-N} on the torus is that, in this setting, $(\B(\bu,\bu),\A\bu)=0$ for all $\bu\in \mathrm{D}(\A)$. For the case $\U=\boldsymbol{0},$ the authors in \cite{ZB+GD+MM-18} established the existence and uniqueness of global strong solutions to problem \eqref{eqn-constrained-NSE-N} using a fixed-point argument. Since the inclusion of the control term $\U$ introduces an additional nonlinearity, we establish the well-posedness of \eqref{eqn-constrained-NSE-N} by employing a Faedo-Galerkin approximation method and using compactness arguments.

	\begin{remark}
	If one considers the system \eqref{eqn-2DNSE}, then	the feedback control $\U=\nu\|\nabla\bu\|_{\H}^2\bu$ exactly compensates the viscous dissipation and enforces the conservation of the $\H$-norm. In fact, if $\|\bu_0\|_{\H}=1$, then $\|\bu(t)\|_{\H}=1$ for all $t\geq 0$. Thus the controlled dynamics evolves on the unit sphere in $\H$.
	\end{remark}
	
	\begin{lemma}[{\cite[Lemmas 3.2, 3.3]{ZB+GD+MM-18}}]\label{lem-loc-lip}
		The mapping 
		\begin{align}\label{eqn-F(u)}
		\mathrm{D}(\A)\ni\bu\mapsto\mathcal{F}(\bu):=-\nu\mathcal{A}\bu+ \nu\|\nabla\bu\|_{\H}^2\bu-\B(\bu,\bu)+\U-(\U,\bu)\bu\in \H
		\end{align}
		 is locally Lipschitz, that is, there exists a constant $L_{R,\|\U\|_{\H}}>0$ such that 
		\begin{align}\label{eqn-loc-lip}
			\|\mathcal{F}(\bu)-\mathcal{F}(\bv)\|_{\H}\leq L_{R,\|\U\|_{\H}}\|\bu-\bv\|_{\mathrm{D}(\A)},
		\end{align}
		for all $\|\bu\|_{\mathrm{D}(\A)}, \|\bv\|_{\mathrm{D}(\A)}\leq R$. 
	\end{lemma}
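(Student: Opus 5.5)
We split $\mathcal{F}(\bu)-\mathcal{F}(\bv)$ into four differences and bound each in $\H$ by a constant times $\|\bu-\bv\|_{\mathrm{D}(\A)}$. Throughout, $\|\bu\|_{\mathrm{D}(\A)}=\|\A\bu\|_{\H}$; by Poincaré we have $\|\bu\|_{\H}\le \lambda_1^{-1/2}\|\bu\|_{\V}\le \lambda_1^{-1}\|\A\bu\|_{\H}$, and $\|\nabla\bu\|_{\H}^2=(\A\bu,\bu)$. The control term $\U$ cancels in the difference. Hence
\begin{align*}
\mathcal{F}(\bu)-\mathcal{F}(\bv)&=-\nu\A(\bu-\bv)+\nu\bigl(\|\nabla\bu\|_{\H}^2\bu-\|\nabla\bv\|_{\H}^2\bv\bigr)-\bigl(\B(\bu,\bu)-\B(\bv,\bv)\bigr)\\
&\quad-\bigl((\U,\bu)\bu-(\U,\bv)\bv\bigr).
\end{align*}

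\emph{Linear term.} It gives $\nu\|\bu-\bv\|_{\mathrm{D}(\A)}$ directly.

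\emph{Energy term.} We write
\[
\|\nabla\bu\|_{\H}^2\bu-\|\nabla\bv\|_{\H}^2\bv=\|\nabla\bu\|_{\H}^2(\bu-\bv)+\bigl(\|\nabla\bu\|_{\H}+\|\nabla\bv\|_{\H}\bigr)\bigl(\|\nabla\bu\|_{\H}-\|\nabla\bv\|_{\H}\bigr)\bv .
\]
We then use $\bigl|\|\nabla\bu\|_{\H}-\|\nabla\bv\|_{\H}\bigr|\le\|\nabla(\bu-\bv)\|_{\H}$ together with the Poincaré bounds. This yields a bound $C\nu\lambda_1^{-3/2}R^2\|\bu-\bv\|_{\mathrm{D}(\A)}$ (up to harmless constants).

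\emph{Control term.} We write
\[
(\U,\bu)\bu-(\U,\bv)\bv=(\U,\bu-\bv)\bu+(\U,\bv)(\bu-\bv).
\]
Its $\H$-norm is at most $\|\U\|_{\H}\bigl(\|\bu\|_{\H}+\|\bv\|_{\H}\bigr)\|\bu-\bv\|_{\H}\le 2\lambda_1^{-2}R\|\U\|_{\H}\|\bu-\bv\|_{\mathrm{D}(\A)}$. This is where the dependence of $L$ on $\|\U\|_{\H}$ enters. Here $\U$ is regarded as a fixed element of $\H$, i.e., at a.e.\ time.

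\emph{Bilinear term.} This is the only step needing real care, because we must land in $\H$ rather than $\V'$. We write
\[
\B(\bu,\bu)-\B(\bv,\bv)=\B(\bu-\bv,\bu)+\B(\bv,\bu-\bv).
\]
For each piece we use the extended estimate for $b:\V\times\mathrm{D}(\A)\times\H\to\R$ established above:
\[
\|\B(\w,\z)\|_{\H}\le C\|\w\|_{\H}^{1/2}\|\w\|_{\V}^{1/2}\|\nabla\z\|_{\H}^{1/2}\|\A\z\|_{\H}^{1/2}\le C'\|\w\|_{\mathrm{D}(\A)}\|\z\|_{\mathrm{D}(\A)}.
\]
Applied to $(\w,\z)=(\bu-\bv,\bu)$ and $(\bv,\bu-\bv)$, this gives $2C'R\|\bu-\bv\|_{\mathrm{D}(\A)}$. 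The same estimate shows that $\B$ indeed maps $\mathrm{D}(\A)\times\mathrm{D}(\A)$ into $\H$, so $\mathcal{F}$ is well defined into $\H$. This also explains why the lemma is stated in $\mathrm{D}(\A)$ and not in $\V$.

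\emph{Conclusion.} Summing the four bounds, we may take
\[
L_{R,\|\U\|_{\H}}=\nu+C\nu R^2+C R+CR\|\U\|_{\H},
\]
with constants depending only on $\lambda_1$. This constant depends only on $R$, $\|\U\|_{\H}$, $\nu$ and $\lambda_1$, as claimed.
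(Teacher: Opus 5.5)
Your proposal is correct and follows essentially the same route as the paper: the same four-way splitting of $\mathcal{F}(\bu)-\mathcal{F}(\bv)$, the same decompositions $\B(\bu-\bv,\bu)+\B(\bv,\bu-\bv)$ and $(\U,\bu-\bv)\bu+(\U,\bv)(\bu-\bv)$, and Poincar\'e to pass to the $\mathrm{D}(\A)$-norm. The only cosmetic difference is that you bound $\B(\bv,\bu-\bv)$ via $\mathbb{L}^4$--$\mathbb{L}^4$ and Ladyzhenskaya, where the paper uses $\|\bv\|_{\mathbb{L}^{\infty}}\|\nabla(\bu-\bv)\|_{\H}$ and Agmon; also, tracking the Poincar\'e factors in your energy-term bound gives $\lambda_1^{-2}R^2$ rather than $\lambda_1^{-3/2}R^2$, which is immaterial.
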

	\begin{proof}
	Let us fix $\bu,\bv\in \mathrm{D}(\A)$. 	Since $\mathcal{A}$ is a linear operator, we have 
		\begin{align}\label{eqn-11}
			\nu\|\A\bu-\A\bu\|_{\H}=	\nu\|\A(\bu-\bu)\|_{\H}=\nu\|\bu-\bv\|_{\mathrm{D}(\A)}. 
		\end{align}
		Let us now consider the nonlinear term 
		\begin{align}\label{eqn-12}
			\nu\left\|\|\nabla\bu\|_{\H}^2\bu-\|\nabla\bv\|_{\H}^2\bv\right\|_{\H}&\leq\nu \|\nabla\bu\|_{\H}^2\|\bu-\bv\|_{\H}+\left(\|\nabla\bu\|_{\H}^2-\|\nabla\bv\|_{\H}^2\right)\|\bv\|_{\H}\nonumber\\&\leq\frac{\nu}{\sqrt{\lambda_1}}\left(\|\nabla\bu\|_{\H}^2+\|\nabla\bu\|_{\H}\|\nabla\bv\|_{\H}+\|\nabla\bv\|_{\H}^2\right)\|\nabla(\bu-\bv)\|_{\H}. 
		\end{align}
		Using H\"older's, Agmon's  and Sobolev's inequalities, we estimate 
		\begin{align}\label{eqn-13}
			\|\B(\bu,\bu)-\B(\bv,\bv)\|_{\H}&\leq\|\B(\bu-\bv,\bu)\|_{\H}+\|\B(\bv,\bu-\bv)\|_{\H}\nonumber\\&\leq \|\bu-\bv\|_{\mathbb{L}^4}\|\nabla\bu\|_{\mathbb{L}^4}+\|\bv\|_{\mathbb{L}^{\infty}}\|\nabla(\bu-\bv)\|_{\H}\nonumber\\&\leq C\left(\|\nabla\bu\|_{\H}^{1/2}\|\A\bu\|_{\H}^{1/2}+\|\bv\|_{\H}^{1/2}\|\A\bv\|_{\H}^{1/2}\right)\|\nabla(\bu-\bv)\|_{\H}. 
		\end{align}
		Finally, we consider
		\begin{align}\label{eqn-14}
			\|\U-(\U,\bu)\bu-(\U-(\U,\bv)\bv)\|_{\H}&=\|(\U,\bu)\bu-(\U,\bv)\bv\|_{\H}
			\nonumber\\&\leq|(\U,\bu)|\|\bu-\bv\|_{\H}+|(\U,\bu-\bv)|\|\bv\|_{\H}
			\nonumber\\&\leq\|\U\|_{\H}\left(\|\bu\|_{\H}+\|\bv\|_{\H}\right)\|\bu-\bv\|_{\H}.
		\end{align}
		Combining \eqref{eqn-11}-\eqref{eqn-14} and using the fact that $\|\bu\|_{\mathrm{D}(\A)}, \|\bv\|_{\mathrm{D}(\A)}\leq R$, we obtain \eqref{eqn-loc-lip}. 
	\end{proof}
	
	\subsection{Existence and uniqueness}
	To establish the existence and uniqueness of strong solutions to problem \eqref{eqn-constrained-NSE-N}, we employ the Faedo-Galerkin approximation method together with the Banach-Alaoglu theorem, the Aubin-Lions-Simon compactness lemma, and the Lions-Magenes lemma. Before proceeding with the analysis, we first define what is meant by a strong solution of problem \eqref{eqn-constrained-NSE-N}.
	\begin{definition}\label{def-strong-soln}
		Let   $\bu_0 \in \V\cap\mathcal{M}$  be fixed.
		A function
		\begin{align*}
			\bu \in \C ([0,T];  \V\cap\mathcal{M})\cap \mathrm{L}^2(0,T; \mathrm{D}(\A)),
		\end{align*}
		is called a \emph{strong solution} of problem \eqref{eqn-constrained-NSE-N} on time interval $[0,T]$, if and only if
		the following three conditions are satisfied:
		\begin{itemize}
			\item[(i)] The time derivative of $\bu$, in the weak sense, $\frac{\d\bu}{\d t}$ is in $\mathrm{L}^2(0,T; \H)$ and $-\nu\A\bu+\nu\|\nabla\bu\|_{\H}^2\bu-\B(\bu,\bu)+\U-(\U,\bu)\bu \in \mathrm{L}^2(0,T; \H)$.
			\item[(ii)] The equation \eqref{eqn-constrained-NSE-N} is satisfied in $\mathrm{L}^2(0,T;\H)$, that is, for all $\boldsymbol{\varphi}\in\mathrm{L}^2(0,T;\H)$, $\bu$ satisfies 
		\begin{align}
				\int_0^T\left(\frac{\d\bu(t)}{\d t}-\mathcal{F}(\bu(t)),\boldsymbol{\varphi}(t)\right)\d t=0,
			\end{align}
			where $\mathcal{F}(\bu)$ is defined in \eqref{eqn-F(u)}.
			\item[(iii)] The initial condition is satisfied in the following sense: 
			\begin{align}
				\bu(0) = \bu_0 \ \text{ in }\  \H. 
			\end{align}
		\end{itemize}
	\end{definition}
We now proceed to establish the existence and uniqueness of strong solutions for problem \eqref{eqn-constrained-NSE-N}.
	\begin{theorem}\label{thm-exis-unique}
		For $\bu_0\in\V\cap\mathcal{M}$ and $\U\in\mathrm{L}^2(0,T;\H)$, there exists a \emph{unique strong solution}
		\begin{align*}
			\bu\in\C([0,T];\V)\cap\mathrm{L}^2(0,T;\mathrm{D}(\A))\ \text{ and }\ \frac{\d\bu}{\d t}\in\mathrm{L}^2(0,T;\H)
		\end{align*}
		to the forced constrained 2D NSE \eqref{eqn-constrained-NSE-N} 	such that 
		\begin{align*}
			\bu(t)\in\mathcal{M}\ \text{ for all }\ t\in[0,T].
		\end{align*}
		Moreover, the following energy estimate is satisfied: 
		\begin{align}\label{eqn-main-energy}
			&	\nu\sup_{t\in[0,T]}	\|\nabla\bu(t)\|_{\H}^2+\nu^2\int_0^T\|\A\bu(t)\|_{\H}^2\d t+	\int_0^T\left\|\frac{\d\bu(t)}{\d t}\right\|_{\H}^2\d t
			\nonumber\\&\leq C_{\nu,T}\left[1+\left(\nu\|\nabla\bu_0\|_{\H}^2+\int_0^T\|\U(t)\|_{\H}^2\d t\right)^2\right].
		\end{align}
	\end{theorem}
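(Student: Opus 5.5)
We will use a Faedo--Galerkin scheme built on the Stokes eigenfunctions $\{\w_k\}$. Let $\mathrm{P}_n$ denote the orthogonal projection of $\H$ onto $\mathrm{span}\{\w_1,\dots,\w_n\}$. We seek $\bu_n(t)\in\mathrm{P}_n\H$ solving
\[
\frac{\d\bu_n}{\d t}=-\nu\A\bu_n+\nu\frac{\|\nabla\bu_n\|_{\H}^2}{\|\bu_n\|_{\H}^2}\bu_n-\mathrm{P}_n\B(\bu_n,\bu_n)+\mathrm{P}_n\U-\frac{(\U,\bu_n)}{\|\bu_n\|_{\H}^2}\bu_n,\qquad \bu_n(0)=\frac{\mathrm{P}_n\bu_0}{\|\mathrm{P}_n\bu_0\|_{\H}}.
\]
We normalise $\bu_n$ by its own norm in the finite-dimensional equation; this is legitimate for $n$ large, since $\mathrm{P}_n\bu_0\to\bu_0\neq0$. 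The vector field is locally Lipschitz near the sphere, so Carath\'eodory theory gives a local solution.

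\textbf{The constraint is preserved exactly.} Testing the equation with $\bu_n$ and using $b(\bu_n,\bu_n,\bu_n)=0$ gives $\frac{\d}{\d t}\|\bu_n\|_{\H}^2=0$. Hence $\|\bu_n(t)\|_{\H}=1$, the normalising factors equal one, and the scheme is exactly the Galerkin truncation of \eqref{eqn-constrained-NSE-N}. This also rules out blow-up of $\bu_n$ in $\H$.

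\textbf{$\V$-estimate.} We test with $\A\bu_n$ and use $(\B(\bu_n,\bu_n),\A\bu_n)=0$ on $\mathbb{T}^2$. With $y_n:=\|\nabla\bu_n\|_{\H}^2$ this gives
\[
\frac12\frac{\d}{\d t}y_n+\nu\|\A\bu_n\|_{\H}^2=\nu y_n^2+(\U,\A\bu_n)-(\U,\bu_n)y_n .
\]
The term $\nu y_n^2$ looks dangerous, but the constraint controls it. Interpolation and $\|\bu_n\|_{\H}=1$ give $y_n=(\A\bu_n,\bu_n)\le\|\A\bu_n\|_{\H}$, so $\nu y_n^2\le\nu\|\A\bu_n\|_{\H}^2$; we cannot simply absorb it. The key observation is
\[
\|\A\bu_n\|_{\H}^2-y_n^2=\|\A\bu_n-y_n\bu_n\|_{\H}^2\ge0 .
\]
Writing $\A\bu_n-y_n\bu_n=\pi_{\bu_n}\A\bu_n$, the identity reads
\[
\frac12\frac{\d}{\d t}y_n+\nu\|\pi_{\bu_n}\A\bu_n\|_{\H}^2=(\U,\pi_{\bu_n}\A\bu_n).
\]
Young's inequality then gives $\frac{\d}{\d t}y_n+\nu\|\pi_{\bu_n}\A\bu_n\|_{\H}^2\le\nu^{-1}\|\U\|_{\H}^2$. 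Consequently
\[
\sup_{t}y_n(t)+\nu\int_0^T\|\pi_{\bu_n}\A\bu_n\|_{\H}^2\,\d t\le\|\nabla\bu_0\|_{\H}^2+\nu^{-1}\int_0^T\|\U\|_{\H}^2\,\d t .
\]
Since $\|\A\bu_n\|_{\H}^2=\|\pi_{\bu_n}\A\bu_n\|_{\H}^2+y_n^2$, we obtain $\int_0^T\|\A\bu_n\|_{\H}^2\,\d t\le \int_0^T\|\pi_{\bu_n}\A\bu_n\|_{\H}^2\,\d t+T\sup_t y_n^2$. The square in this bound explains the square on the right-hand side of \eqref{eqn-main-energy}.

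\textbf{Time-derivative bound.} We estimate $\frac{\d\bu_n}{\d t}$ in $\mathrm{L}^2(0,T;\H)$ term by term:
\begin{itemize}
\item $\nu\A\bu_n$ and $\nu y_n\bu_n$ are controlled by the $\mathrm{D}(\A)$ and $\V$ bounds above;
\item the convective term satisfies $\|\B(\bu_n,\bu_n)\|_{\H}\le C\|\bu_n\|_{\H}^{1/2}\|\nabla\bu_n\|_{\H}\|\A\bu_n\|_{\H}^{1/2}$ by Agmon's inequality;
\item the forcing terms satisfy $|(\U,\bu_n)|\le\|\U\|_{\H}$.
\end{itemize}
Together these give \eqref{eqn-main-energy} uniformly in $n$. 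This identification of the right way to absorb $\nu y_n^2$, via the tangential projection, is the main obstacle. Once it is found, everything else is standard.

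\textbf{Passage to the limit.} The Banach--Alaoglu theorem gives weak-$*$ limits: $\bu_n\rightharpoonup\bu$ in $\mathrm{L}^\infty(0,T;\V)$, weakly in $\mathrm{L}^2(0,T;\mathrm{D}(\A))$, and $\frac{\d\bu_n}{\d t}\rightharpoonup\frac{\d\bu}{\d t}$ in $\mathrm{L}^2(0,T;\H)$. The Aubin--Lions--Simon lemma gives strong convergence in $\mathrm{L}^2(0,T;\V)\cap\C([0,T];\H)$. This is enough to pass to the limit in the nonlinear terms:
\begin{itemize}
\item $y_n\bu_n\to\|\nabla\bu\|_{\H}^2\bu$, using the strong $\V$ convergence;
\item $\B(\bu_n,\bu_n)\rightharpoonup\B(\bu,\bu)$;
\item $(\U,\bu_n)\bu_n\to(\U,\bu)\bu$.
\end{itemize}
Strong convergence in $\C([0,T];\H)$ also gives $\|\bu(t)\|_{\H}=1$ for every $t$. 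The Lions--Magenes lemma, applied with $\bu\in\mathrm{L}^2(0,T;\mathrm{D}(\A))$ and $\frac{\d\bu}{\d t}\in\mathrm{L}^2(0,T;\H)$, yields $\bu\in\C([0,T];\V)$. Weak lower semicontinuity transfers the energy bound to $\bu$.

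\textbf{Uniqueness.} Let $\bu,\bv$ be two strong solutions and set $\w=\bu-\bv$. We estimate $\frac12\frac{\d}{\d t}\|\w\|_{\H}^2$, which is justified by the regularity already proved. The terms behave as follows:
\begin{itemize}
\item the viscous term gives $-\nu\|\nabla\w\|_{\H}^2$;
\item $b(\w,\bv,\w)$ is bounded by $C\|\w\|_{\H}\|\nabla\w\|_{\H}\|\nabla\bv\|_{\H}$;
\item the difference $\nu(\|\nabla\bu\|^2\bu-\|\nabla\bv\|^2\bv,\w)$ is bounded by $\nu\|\nabla\bu\|^2\|\w\|^2+\nu(\|\nabla\bu\|+\|\nabla\bv\|)\|\nabla\w\|\|\w\|$;
\item the control terms are bounded by $2\|\U\|_{\H}\|\w\|_{\H}^2$, using the estimate \eqref{eqn-14} of Lemma \ref{lem-loc-lip}.
\end{itemize}
Young's inequality absorbs all $\|\nabla\w\|$ contributions into the dissipation. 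We are left with $\frac{\d}{\d t}\|\w\|^2\le g(t)\|\w\|^2$, where $g\in\mathrm{L}^1(0,T)$ by the $\mathrm{L}^\infty(0,T;\V)$ bounds and $\U\in\mathrm{L}^2(0,T;\H)$. Gronwall's inequality then gives $\w\equiv0$.
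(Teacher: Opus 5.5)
Your proposal is correct and follows essentially the same route as the paper: a Faedo--Galerkin scheme with normalised initial data, invariance of the unit sphere at the Galerkin level, the $\V$-estimate from testing with $\A\bu_n$ and using $\|\A\bu_n\|_{\H}^2-\|\nabla\bu_n\|_{\H}^4=\|\pi_{\bu_n}\A\bu_n\|_{\H}^2$ (this is exactly what the paper encodes through the Riemannian gradient $\nabla_{\mathcal{M}}\mathscr{E}(\bu_m)$), Banach--Alaoglu and Aubin--Lions--Simon compactness, the Lions--Magenes lemma for continuity in $\V$, and an $\H$-energy Gr\"onwall argument for uniqueness. The differences are cosmetic: dividing by $\|\bu_n\|_{\H}^2$ in the Galerkin vector field makes norm conservation a one-line computation instead of the paper's variation-of-constants argument for $\|\bu_m\|_{\H}^2-1$, and you bound $\frac{\d\bu_n}{\d t}$ term by term in $\mathrm{L}^2(0,T;\H)$ rather than testing the equation with it.
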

	\begin{proof}	
		We divide the proof into the following steps:
		\vskip 0.1 cm 
		\noindent 
		\textbf{Step 1:} \emph{Existence:}
		Let $\{\w_1, \w_2, \ldots, \w_m, \ldots\}$ be a complete orthonormal basis of $\H$ contained in $\V$; for example, one may choose the eigenfunctions of the Stokes operator.
		Define the finite-dimensional subspace
		$\H_m = \operatorname{span}\{\w_1, \ldots, \w_m\} \subset \H.$
		Clearly, we have the inclusion $\H_m \subset \H_{m+1} \subset \V$.
		Let $\Pi_m$ denote the orthogonal projection from $\H$ onto $\H_m$, defined by
		$\Pi_m \x = \sum_{i=1}^m ( \x, \w_i ) \w_i.$
		For a given $T > 0$ and for each $m \in \mathbb{N}$, we look for an approximate solution of the form
		$\bu_m(t) = \sum_{k=1}^m g_{km}(t) \w_k,$
		which satisfies the initial value problem associated with the following finite-dimensional system of ordinary differential equations (ODEs) in $\H_m$:
		\begin{equation}\label{eqn-constrained-APPR}
			\left\{
			\begin{aligned}
				\frac{\d\bu_m(t)}{\d t}&=
				-\nu\A\bu_m(t)+\nu\|\nabla\bu_m(t)\|_{\H}^2\bu_m(t)-\Pi_m\B(\bu_m(t),\bu_m(t))+\U_m(t)\\&\quad-(\U_m(t),\bu_m(t))\bu_m(t),\ t\in(0,T), \\
				\bu_m(0)&=\frac{\bu_0^m}{\|\bu_0^m\|_{\H}},
			\end{aligned}
			\right. 
		\end{equation} 
		where $\U_m=\Pi_m\U$, $\bu_0^m=\Pi_m\bu_0$ and $g_{km}(0)=\frac{1}{\|\bu_0^m\|_{\H}}(\bu_0,\w_k)$. Since $\bu_0^m\to \bu_0$, we infer $\|\bu_0^m\|_{\H}\to \|\bu_0\|_{\H}=1$ as $m\to\infty$. Therefore, we have 
		\begin{align}\label{eqn-intial-strong}
			\|\bu_m(0)-\bu_0\|_{\H}&\leq\left\|\frac{\Pi_m\bu_0}{\|\bu_0^m\|_{\H}}-\Pi_m\bu_0\right\|_{\H}+\left\|\Pi_m\bu_0-\bu_0\right\|_{\H}\nonumber\\&\leq\|\bu_0\|_{\H}\left(\frac{1}{\|\bu_0^m\|_{\H}}-1\right)+\left\|\Pi_m\bu_0-\bu_0\right\|_{\H}\nonumber\\&\to 0\ \text{ as }\  m\to\infty. 
		\end{align}
		Note that $(\U_m,\bu_m)=(\Pi_m\U,\bu_m)=(\U,\bu_m)$, since $\Pi_m\bu_m=\bu_m$. Since $\U\in\mathrm{L}^2(0,T;\H)$ and  $\mathcal{F}(\bu)=:-\nu\A\bu+\nu\|\nabla\bu\|_{\H}^2\bu-\B(\bu)+\U-(\U,\bu)\bu$ satisfies a locally Lipschitz condition (see \eqref{eqn-loc-lip}), by Carath\'eodory's existence theorem, we infer the existence and uniqueness of a local solution $\bu_m\in\mathrm{C}([0,T^*_m];\V_m)$ to the problem \eqref{eqn-constrained-APPR} for some $0<T^*_m\leq T$. By showing uniform energy estimates, one can obtain that the time $T^*_m$ can be extended to $T$. 
		
		Taking the inner product with $\bu_m(\cdot)$  to the first equation in \eqref{eqn-constrained-APPR}, we find 
		\begin{align*}
			\frac{\d}{\d t}\left(\|\bu_m(t)\|_{\H}^2-1\right)&=2\left(\frac{\d\bu_m(t)}{\d t},\bu_m(t)\right)\nonumber\\&=2\nu\|\nabla\bu_m(t)\|_{\H}^2\left(\|\bu_m(t)\|_{\H}^2-1\right)-2(\U_m(t),\bu_m(t))\left(\|\bu_m(t)\|_{\H}^2-1\right),
		\end{align*}
		where we have used the fact that $(\B(\bu_m,\bu_m),\bu_m)=0$. The variation of constants formula yields 
		\begin{align}
			\left(\|\bu_m(t)\|_{\H}^2-1\right)=\left(\|\bu_m(0)\|_{\H}^2-1\right)\exp\left\{2\int_0^t\left[\nu\|\nabla\bu_m(s)\|_{\H}^2-(\U_m(s),\bu_m(s))\right]\d s\right\},
		\end{align} 
		for all $t\in[0,T]$. Since $ \|\bu_m(0)\|_{\H}^2=1$, we deduce that $\|\bu_m(t)\|_{\H}^2=1$ all $t\in[0,T]$, provided 
		\begin{align*}
		\int_0^t\left[\nu\|\nabla\bu_m(s)\|_{\H}^2-(\U_m(s),\bu_m(s))\right]\d s\leq \int_0^t\left[\nu\|\nabla\bu_m(s)\|_{\H}^2+\|\U_m(s)\|_{\H}\|\bu_m(s)\|_{\H}\right]\d s<\infty.
		\end{align*} Since $\bu_m\in\mathrm{C}([0,T_m^*];\V_m)$ and all norms in finite-dimensional spaces are equivalent, the finiteness can be shown for any $t\in[0,T_m^*]$. The following energy estimate demonstrates that the time $T_m^*$	can be extended up to $T$. Therefore, we have the following invariance property:
		\begin{align}\label{eqn-est-ener-9}
			\bu_m(t)\in\mathcal{M}\ \text{ for all }\ t\in[0,T]. 
		\end{align}

		Let us now define  the following \emph{energy functional} involving the geometric Lagrange multiplier $(\U_m,\bu_m)$
		\begin{align}
			\mathscr{E}(\bu_m)=\frac{\nu}{2}\int_{\mathbb{T}^2}|\nabla\bu_m(x)|^2\d x-\frac{1}{2}\int_{\mathbb{T}^2}(\U_m(x),\bu_m(x))\d x.
		\end{align}
		Then, we compute $\nabla_{\mathcal{M}}\mathscr{E}(\bu_m)$ as 
		\begin{align}
			\nabla_{\mathcal{M}}\mathscr{E}(\bu_m)&=\Pi_{\bu_m}(\nabla\mathscr{E})=\Pi_{\bu_m}\left(\nu\A\bu_m-\frac{1}{2}\U_m\right)
			\nonumber\\&=\nu\A\bu_m-\nu\|\nabla\bu_m\|_{\H}^2\bu_m-\frac{1}{2}\left(\U_m-(\U_m,\bu_m)\bu_m\right).
		\end{align}
		Moreover, for $\bu_m\in\mathrm{D}(\A)\cap\mathcal{M}$ (sine $\bu_m$ is finite dimensional), we compute 
		\begin{align}\label{eqn-grad-est-10}
			\|\nabla_{\mathcal{M}}\mathscr{E}(\bu_m)\|_{\H}^2&=\left\|\nu\A\bu_m-\nu\|\nabla\bu_m\|_{\H}^2\bu_m-\frac{1}{2}\left(\U_m-(\U_m,\bu_m)\bu_m\right)\right\|_{\H}^2\nonumber\\&=\nu^2\left\|\A\bu_m-\|\nabla\bu_m\|_{\H}^2\bu_m\right\|_{\H}^2+\frac{1}{4}\left\|\U_m-(\U_m,\bu_m)\bu_m\right\|_{\H}^2\nonumber\\&\quad-\nu\left(\A\bu_m-\|\nabla\bu_m\|_{\H}^2\bu_m,\U_m-(\U_m,\bu_m)\bu_m\right)\nonumber\\&=\nu^2\left[\|\A\bu_m\|_{\H}^2+\|\nabla\bu_m\|_{\H}^4\|\bu_m\|_{\H}^2-2\|\nabla\bu_m\|_{\H}^2(\A\bu_m,\bu_m)\right]\nonumber\\&\quad+\frac{1}{4}\left[\|\U_m\|_{\H}^2+|(\U_m,\bu_m)|^2\|\bu_m\|_{\H}^2-2|(\U_m,\bu_m)|^2\right]\nonumber\\&\quad-\nu\left[(\A\bu_m,\U_m)-(\U_m,\bu_m)(\A\bu_m,\bu_m)-\|\nabla\bu_m\|_{\H}^2(\bu_m,\U_m)\right.\nonumber\\&\left.\quad+\|\nabla\bu_m\|_{\H}^2(\U_m,\bu_m)\|\bu_m\|_{\H}^2\right]\nonumber\\&=\nu^2\left[\|\A\bu_m\|_{\H}^2-\|\nabla\bu_m\|_{\H}^4\right]+\frac{1}{4}\left[\|\U_m\|_{\H}^2-|(\U_m,\bu_m)|^2\right]\nonumber\\&\quad-\nu\left[(\A\bu_m,\U_m)-(\U_m,\bu_m)\|\nabla\bu_m\|_{\H}^2\right].
		\end{align}
		
		Taking the inner product with $\nu\A\bu_m(\cdot)$ to the first equation in \eqref{eqn-constrained-APPR}, using  $(\B(\bu_m),\A\bu_m)=0$ and $\bu_m(t)\in\mathcal{M}$ for all $t\in[0,T]$, we obtain 
		\begin{align}\label{eqn-ener-est-0}
			&\frac{\nu}{2}	\frac{\d}{\d t}\|\nabla\bu_m(t)\|_{\H}^2\nonumber\\&=\left(\nu\A\bu_m(t),\frac{\d\bu_m(t)}{\d t}\right)\nonumber\\&= (\nu\A\bu_m(t),-\nu\A\bu_m(t)+\nu\|\nabla\bu_m(t)\|_{\H}^2\bu_m(t)-\B(\bu_m(t),\bu_m(t)))\nonumber\\&\quad +(\nu\A\bu_m(t),\U_m(t)-(\U_m(t),\bu_m(t))\bu_m(t))\nonumber\\&=-\nu^2\left[\|\A\bu_m(t)\|_{\H}^2-\|\nabla\bu_m(t)\|_{\H}^4\right]+\nu\left[(\A\bu_m(t),\U_m(t))-(\U_m(t),\bu_m(t))\|\nabla\bu_m(t)\|_{\H}^2\right]\nonumber\\&= -	\|\nabla_{\mathcal{M}}\mathscr{E}(\bu_m(t))\|_{\H}^2+\frac{1}{4}\left[\|\U_m(t)\|_{\H}^2-|(\U_m(t),\bu_m(t))|^2\right]\nonumber\\&\leq -	\|\nabla_{\mathcal{M}}\mathscr{E}(\bu_m(t))\|_{\H}^2+\frac{1}{2}\|\U_m(t)\|_{\H}^2,
		\end{align} 
	for a.e. $t\in[0,T]$.	Integrating the above inequality from $0$ to $t$ and then taking supremum over time, we find
		\begin{align}\label{eqn-ener-est-1}
			\nu\sup_{t\in[0,T]}	\|\nabla\bu_m(t)\|_{\H}^2+2\int_0^T	\|\nabla_{\mathcal{M}}\mathscr{E}(\bu_m(t))\|_{\H}^2\d t\leq \nu\|\nabla\bu_0\|_{\H}^2+\int_0^T\|\U(t)\|_{\H}^2\d t,
		\end{align}
		for all $t\in[0,T]$. 
		
		From the relation \eqref{eqn-grad-est-10}, it is immediate that 
		\begin{align}
			\nu^2\|\A\bu_m\|_{\H}^2&=	\|\nabla_{\mathcal{M}}\mathscr{E}(\bu_m)\|_{\H}^2+\nu^2\|\nabla\bu_m\|_{\H}^4-\frac{1}{4}\left[\|\U_m\|_{\H}^2-|(\U_m,\bu_m)|^2\right]\nonumber\\&\quad+\nu\left[(\A\bu_m,\U_m)-(\U_m,\bu_m)\|\nabla\bu_m\|_{\H}^2\right]
			\nonumber\\&\leq \|\nabla_{\mathcal{M}}\mathscr{E}(\bu_m)\|_{\H}^2+\nu^2\|\nabla\bu_m\|_{\H}^4+\frac{1}{2}\|\U_m\|_{\H}^2+\nu\|\A\bu_m\|_{\H}\|\U_m\|_{\H}\nonumber\\&\quad+\nu\|\U_m\|_{\H}\|\nabla\bu_m\|_{\H}^2
			\nonumber\\&\leq \|\nabla_{\mathcal{M}}\mathscr{E}(\bu_m)\|_{\H}^2+2\nu^2\|\nabla\bu_m\|_{\H}^4+\frac{\nu^2}{2}\|\A\bu_m\|_{\H}^2+\frac{5}{4}\|\U_m\|_{\H}^2.
		\end{align}
		Therefore, using the estimate \eqref{eqn-ener-est-1}, we deduce 
		\begin{align}\label{eqn-ener-est-2}
			\nu^2\int_0^T\|\A\bu_m(t)\|_{\H}^2\d t&\leq 2 \int_0^T\|\nabla_{\mathcal{M}}\mathscr{E}(\bu_m(t))\|_{\H}^2\d t+4\nu^2\int_0^T\|\nabla\bu_m(t)\|_{\H}^4\d t+\frac{5}{2}\int_0^T\|\U_m(t)\|_{\H}^2\d t\nonumber\\&\leq \nu\|\nabla\bu_0\|_{\H}^2+\int_0^T\|\U_m(t)\|_{\H}^2\d t +4\nu T\left(\nu\|\nabla\bu_0\|_{\H}^2+\int_0^T\|\U_m(t)\|_{\H}^2\d t\right)^2\nonumber\\&\quad+\frac{5}{2}\int_0^T\|\U_m(t)\|_{\H}^2\d t\nonumber\\&\leq C_{\nu,T}\left[1+\left(\nu\|\nabla\bu_0\|_{\H}^2+\int_0^T\|\U(t)\|_{\H}^2\d t\right)^2\right].
		\end{align}

		Using the  fact \eqref{eqn-est-ener-9}, we immediately have $\frac{\d}{\d t}\|\bu_m(t)\|_{\H}^2=2\left(\frac{\d\bu_m(t)}{\d t},\bu_m(t)\right)=0$ for a.e. $t\in[0,T]$, so that 
		\begin{align}\label{eqn-ener-est-3}
			\frac{\nu}{2}	\frac{\d}{\d t}\|\nabla\bu_m(t)\|_{\H}^2&=\left(\nu\A\bu_m(t),\frac{\d\bu_m(t)}{\d t}\right)\nonumber\\&= \left(-\frac{\d\bu_m(t)}{\d t}+\nu\|\nabla\bu_m(t)\|_{\H}^2\bu_m(t)-\B(\bu_m(t),\bu_m(t)),\frac{\d\bu_m(t)}{\d t}\right)\nonumber\\&\quad+\left(\U_m(t)-(\U_m(t),\bu_m(t))\bu_m(t),\frac{\d\bu_m(t)}{\d t}\right)\nonumber\\&=-\left\|\frac{\d\bu_m(t)}{\d t}\right\|_{\H}^2-\left(\B(\bu_m(t),\bu_m(t)),\frac{\d\bu_m(t)}{\d t}\right)+\left(\U_m(t),\frac{\d\bu_m(t)}{\d t}\right)\nonumber\\&\leq-\frac{1}{2}\left\|\frac{\d\bu_m(t)}{\d t}\right\|_{\H}^2+\|\B(\bu_m(t),\bu_m(t))\|_{\H}^2+\|\U_m(t)\|_{\H}^2. 
		\end{align}
		An application of H\"older's, Agmon's and Young's inequalities yield 
		\begin{align}\label{eqn-ener-est-4}
			\|\B(\bu_m,\bu_m)\|_{\H}^2&\leq\|\bu_m\|_{\mathbb{L}^{\infty}}^2\|\nabla\bu_m\|_{\H}^2\leq C\|\bu_m\|_{\H}\|\A\bu_m\|_{\H}\|\nabla\bu_m\|_{\H}^2\nonumber\\&\leq\frac{1}{2}\|\A\bu_m\|_{\H}^2+C\|\nabla\bu_m\|_{\H}^4,
		\end{align}
		where we have used the fact that $\bu_m\in\mathcal{M}$. Therefore, using \eqref{eqn-ener-est-1}, \eqref{eqn-ener-est-2} and \eqref{eqn-ener-est-4} in \eqref{eqn-ener-est-3}, we infer 
		\begin{align}\label{eqn-ener-est-5}
			&	\nu\sup_{t\in[0,T]}\|\nabla\bu_m(t)\|_{\H}^2+\int_0^T\left\|\frac{\d\bu_m(t)}{\d t}\right\|_{\H}^2\d t\nonumber\\&\leq \nu\|\nabla\bu_m(0)\|_{\H}^2+\int_0^T\|\A\bu_m(t)\|_{\H}^2\d t+C\int_0^T\|\nabla\bu_m(t)\|_{\H}^4\d t+2\int_0^T\|\U_m(t)\|_{\H}^2\d t\nonumber\\&\leq C_{\nu,T}\left[1+\left(\nu\|\nabla\bu_0\|_{\H}^2+\int_0^T\|\U(t)\|_{\H}^2\d t\right)^2\right].
		\end{align}
		
		From the energy estimates \eqref{eqn-ener-est-1}, \eqref{eqn-ener-est-2}, and \eqref{eqn-ener-est-5}, together with the Banach-Alaoglu theorem, we can extract a subsequence (again denoted by the same symbol) such that
		\begin{equation}\label{eqn-weak-conv}
			\left\{
			\begin{aligned}
				\bu_m&\xrightarrow{w^*}\bu \ \text{ in }\ \mathrm{L}^{\infty}(0,T;\V),\\
				\bu_m&\xrightarrow{w}\bu \ \text{ in }\ \mathrm{L}^{2}(0,T;\mathrm{D}(\A)),\\
				\frac{\d\bu_m}{\d t}&\xrightarrow{w}\frac{\d\bu}{\d t} \ \text{ in }\ \mathrm{L}^{2}(0,T;\H).
			\end{aligned}
			\right. 
		\end{equation}
		Using the weakly lowersemicontinuity of the norms, we deduce 
			\begin{align}\label{eqn-ener-est-11}
			\|\bu(t)\|_{\H}^2\leq\liminf_{m\to\infty}\|\bu_m(t)\|_{\H}^2=1\ \text{ for all }\ t\in[0,T],
		\end{align}
		The above convergences, combined with the Aubin-Lions-Simon compactness lemma (\cite[Theorem 5, Section 8]{SJ-1987}), also lead to the following additional convergences:
		\begin{equation}\label{eqn-strong-conv}
			\left\{
			\begin{aligned}
				\bu_m&\to\bu \ \text{ in }\ \mathrm{L}^{2}(0,T;\V),\\
				\bu_m&\to\bu \ \text{ in }\ \mathrm{C}([0,T];\H). 
			\end{aligned}
			\right. 
		\end{equation}
		The convergences of the linear terms in \eqref{eqn-constrained-APPR} are clear. We need to justify the convergences of nonlinear terms only. Applying  H\"older's and Agmon's inequalities, using  the energy estimates \eqref{eqn-ener-est-1}, \eqref{eqn-ener-est-2}, and \eqref{eqn-ener-est-5}, and the convergences given in \eqref{eqn-strong-conv}, we obtain 
		\begin{align}\label{eqn-bilinear-conv}
			&\int_0^T\|\B(\bu_m(t),\bu_m(t))-\B(\bu(t),\bu(t))\|_{\H}^2\d t\nonumber\\&\leq 2\int_0^T\|\B(\bu_m(t)-\bu(t),\bu_m(t))\|_{\H}^2\d t+2\int_0^T\|\B(\bu(t),\bu_m(t)-\bu(t))\|_{\H}^2\d t\nonumber\\&\leq 2\int_0^T\|\bu_m(t)-\bu(t)\|_{\mathbb{L}^{\infty}}^2\|\bu_m(t)\|_{\V}^2\d t+2\int_0^T\|\bu(t)\|_{\mathbb{L}^{\infty}}^2\|\nabla(\bu_m(t)-\bu(t))\|_{\mathbb{L}^2}^2\d t\nonumber\\&\leq CT^{1/2}\sup_{t\in[0,T]}\|\bu_m(t)-\bu(t)\|_{\H}\sup_{t\in[0,T]}\|\bu_m(t)\|_{\V}^2\left(\int_0^T\|\A\bu_m(t)\|_{\H}^2\d t\right)^{1/2}\nonumber\\&\quad+C\sup_{t\in[0,T]}\|\bu(t)\|_{\H}\sup_{t\in[0,T]}\left(\|\bu_m(t)\|_{\V}+\|\bu(t)\|_{\V}\right)\left(\int_0^T\|\A\bu(t)\|_{\H}^2\d t\right)^{1/2}\nonumber\\&\qquad\times\left(\int_0^T\|\bu_m(t)-\bu(t)\|_{\V}^2\d t\right)^{1/2}\nonumber\\&\to 0\ \text{ as }\ m\to\infty. 
		\end{align}
		Using  the fact that $\bu_m(t)\in\mathcal{M}$ for all $t\in[0,T]$, the energy estimates \eqref{eqn-ener-est-1} and  and the convergences given in \eqref{eqn-strong-conv}, we find 
		\begin{align}\label{eqn-nonlinear-conv}
			&	\int_0^T\|\|\nabla\bu_m(t)\|_{\H}^2\bu_m(t)-\|\nabla\bu(t)\|_{\H}^2\bu(t)\|_{\H}^2\d t\nonumber\\&\leq 2\int_0^T\left(\|\nabla\bu_m(t)\|_{\H}^2-\|\nabla\bu(t)\|_{\H}^2\right)^2\|\bu_m(t)\|_{\H}^2\d t+2\int_0^T\|\nabla\bu(t)\|_{\H}^4\|\bu_m(t)-\bu(t)\|_{\H}^2\d t\nonumber\\&\leq 4\sup_{t\in[0,T]}\left(\|\bu_m(t)\|_{\V}^2+\|\bu(t)\|_{\V}^2\right)\int_0^T\|\bu_m(t)-\bu(t)\|_{\V}^2\d t\nonumber\\&\quad+2T\sup_{t\in[0,T]}\|\bu(t)\|_{\V}^4\sup_{t\in[0,T]}\|\bu_m(t)-\bu(t)\|_{\H}^2\nonumber\\&\to 0\ \text{ as }\ m\to\infty. 
		\end{align}
		Finally, for any $\boldsymbol{\varphi}\in\mathrm{C}([0,T];\V_n)$, for $n<m$, we consider 
		\begin{align}\label{eqn-conv}
			&	\left|\int_0^T((\U_m(t)-(\U_m(t),\bu_m(t))\bu_m(t))-(\U_m(t)-(\U_m(t),\bu(t))\bu(t)),\boldsymbol{\varphi}(t))\d t\right|\nonumber\\&\leq\left|\int_0^T(\U_m(t)-\U_m(t),\boldsymbol{\varphi}(t))\d t\right|\nonumber\\&\quad+ \left|\int_0^T(\U_m(t),\bu_m(t))(\bu_m(t)-\bu(t),\boldsymbol{\varphi}(t))\d t\right|+\left|\int_0^T(\U_m(t),\bu_m(t)-\bu(t))(\bu(t),\boldsymbol{\varphi}(t))\d t\right|
			\nonumber\\&\quad+\left|\int_0^T(\U_m(t)-\U_m(t),\bu(t))(\bu(t),\boldsymbol{\varphi}(t))\d t\right|\nonumber\\&\leq \left|\int_0^T(\U_m(t)-\U_m(t),\boldsymbol{\varphi}(t))\d t\right|+\sup_{t\in[0,T]}\|\boldsymbol{\varphi}(t)\|_{\H}\int_0^T|(\U_m(t)-\U_m(t),\bu(t))|\d t\nonumber\\&\quad+2\sup_{t\in[0,T]}\|\bu_m(t)-\bu(t)\|_{\H}\left(\int_0^T\|\U_m(t)\|_{\H}^2\d t\right)^{1/2}\left(\int_0^T\|\boldsymbol{\varphi}(t)\|_{\H}^2\d t\right)^{1/2}\nonumber\\&=:I_1^m+I_2^m+I_3^m,
		\end{align}
		where we have used the fact that $\bu_m(t)\in\mathcal{M}$ for all $t\in[0,T]$ and the estimate \eqref{eqn-ener-est-11}. Note that $I_1^m,I_2^m\to 0$ as $m\to\infty$, since $\U_m,\U$ in $\mathrm{L}^2(0,T;\H)$. By using the convergence given in \eqref{eqn-strong-conv} and the fact that $\U\in\mathrm{L}^2(0,T;\H)$, we immediately  have $I_3^m\to 0$ as $m\to\infty$. Note that the above convergence holds true for all $n\in\mathbb{N}$. Since $\mathrm{C}([0,T];\V_n)$ is dense in $\mathrm{L}^2(0,T;\H)$, the  convergence \eqref{eqn-conv} holds true for all $\boldsymbol{\varphi}\in\mathrm{L}^2(0,T;\H)$. 
		
		Therefore, we can pass the limit in the equation \eqref{eqn-constrained-APPR} and obtain the existence of a strong solution $\bu$ to the problem \eqref{eqn-constrained-NSE-N}. Using the weakly lowersemicontinuity of the norms, we infer 
		\begin{align}\label{eqn-ener-est-12}
			&	\nu\sup_{t\in[0,T]}	\|\nabla\bu(t)\|_{\H}^2+\nu^2\int_0^T\|\A\bu(t)\|_{\H}^2\d t+	\int_0^T\left\|\frac{\d\bu(t)}{\d t}\right\|_{\H}^2\d t\nonumber\\&\leq 
			\liminf_{m\to\infty}	\nu\sup_{t\in[0,T]}	\|\nabla\bu_m(t)\|_{\H}^2+\nu^2	\liminf_{m\to\infty}\int_0^T\|\A\bu_m(t)\|_{\H}^2\d t+	\liminf_{m\to\infty}\int_0^T\left\|\frac{\d\bu_m(t)}{\d t}\right\|_{\H}^2\d t
			\nonumber\\&\leq \liminf_{m\to\infty}\left[	\nu\sup_{t\in[0,T]}	\|\nabla\bu_m(t)\|_{\H}^2+\nu^2	\int_0^T\|\A\bu_m(t)\|_{\H}^2\d t+\int_0^T\left\|\frac{\d\bu_m(t)}{\d t}\right\|_{\H}^2\d t\right]
			\nonumber\\&\leq C_{\nu,T}\left[1+\left(\nu\|\nabla\bu_0\|_{\H}^2+\int_0^T\|\U(t)\|_{\H}^2\d t\right)^2\right],
		\end{align}
		which completes the proof of \eqref{eqn-main-energy}. 	We immediately have the initial data convergence $\bu(t)\to\bu_0$ as $t\to 0$ in $\H$, since  for any $t>0$
		\begin{align}
			\|\bu(t)-\bu_0\|_{\H}\leq 	\|\bu(t)-\bu_m(t)\|_{\H}+\|\bu_m(t)-\bu_m(0)\|_{\H}+\|\bu_m(0)-\bu_0\|_{\H}. 
		\end{align} 
		For fixed large $m$, continuity of $\bu_m$ gives the middle term tends to zero  as $t\to 0$, the  first term goes to zero as $m\to\infty$, since  $\bu_m(t)\to\bu(t)$ in $\H$ for all $t\in[0,T]$ (see \eqref{eqn-strong-conv}), 	and the final convergence holds true for sufficiently large $m$ (see \eqref{eqn-intial-strong}). Since $\bu\in\mathrm{L}^{\infty}(0,T;\V)\cap\mathrm{L}^2(0,T;\mathrm{D}(\A))$ and $\frac{\d\bu}{\d t}\in\mathrm{L}^2(0,T;\H)$, an application of the Lions-Magenes lemma (\cite[Lemma 1.2, Chapter 3]{TR-1984}) yields the absolute continuity of the norm $[0,T]\ni t\mapsto\|\nabla\bu(t)\|_{\H}^2$ and the following energy equality for every $t\in[0,T]$: 
		\begin{align}
			&\|\nabla\bu(t)\|_{\H}^2+2\nu\int_0^t\|\A\bu(s)\|_{\H}^2\d s\nonumber\\&=\|\nabla\bu_0\|_{\H}^2+\nu\int_0^t\|\nabla\bu(s)\|_{\H}^4\d s+\int_0^t(\U(s),\A\bu(s))\d s-\int_0^t(\U(s),\bu(s))\|\nabla\bu(s)\|_{\H}^2\d s.
		\end{align}
		Therefore, Radon-Riesz property yields $\bu\in\C([0,T];\V)$ also.

		\vskip 0.1 cm 
		\noindent 
		\textbf{Step 2:} 
			\emph{Invariance of the manifold:} 	Taking the inner product with $\bu(\cdot)$  to the first equation in \eqref{eqn-constrained-NSE-N}, we find 
		\begin{align}
			\frac{\d}{\d t}\left(\|\bu(t)\|_{\H}^2-1\right)&=2\left(\frac{\d\bu(t)}{\d t},\bu(t)\right)\nonumber\\&=2\nu\|\nabla\bu(t)\|_{\H}^2\left(\|\bu(t)\|_{\H}^2-1\right)-2(\U(t),\bu(t))\left(\|\bu(t)\|_{\H}^2-1\right),
		\end{align}
		where we have used the fact that $(\B(\bu,\bu),\bu)=0$. The variation of constants formula yields 
		\begin{align}
			\left(\|\bu(t)\|_{\H}^2-1\right)=\left(\|\bu_0\|_{\H}^2-1\right)\exp\left\{2\int_0^t\left[\nu\|\nabla\bu(s)\|_{\H}^2-(\U(s),\bu(s))\right]\d s\right\},
		\end{align} 
		for all $t\in[0,T]$. Since $\bu_0\in\mathcal{M}$, we deduce that $\|\bu(t)\|_{\H}^2=1$ all $t\in[0,T]$, since 
		\begin{align*}
			\int_0^t\left[\nu\|\nabla\bu(s)\|_{\H}^2-(\U(s),\bu(s))\right]\d s\leq \nu T\sup_{t\in[0,T]}\|\nabla\bu(t)\|_{\H}^2+T^{1/2}\left(\int_0^T\|\U(t)\|_{\H}^2\d t\right)^{1/2},
		\end{align*}
		where we have used \eqref{eqn-ener-est-11} also. This completes the proof of invariance of the manifold.

		\vskip 0.1 cm 
		\noindent 
		\textbf{Step 3:} 
		\emph{Uniqueness:} Let us now prove the uniqueness of strong solutions for the problem \eqref{eqn-constrained-NSE-N}. Let $\bu_1,\bu_2\in \C([0,T];\V\cap\mathcal{M})\cap\mathrm{L}^2(0,T;\mathrm{D}(\A))$ be two strong solutions of \eqref{eqn-constrained-NSE-N} with the same initial data $\bu_0$ and forcing $\U$. Then $\z=\bu_1-\bu_2$ satisfies the following: 
		\begin{equation}\label{eqn-constrained-NSE-D}
			\left\{
			\begin{aligned}
				\frac{\d\bz(t)}{\d t}&=-\nu\A\bz(t)+\nu\|\nabla\bu_1(t)\|_{\H}^2\bz(t)+\left(\|\nabla\bu_1(t)\|_{\H}^2-\|\nabla\bu_2(t)\|_{\H}^2\right)\bu_2(t)\\&\quad-\B(\bu_1(t),\z(t))-\B(\z(t),\bu_2(t))-(\U(t),\bu_1(t))\z(t)-(\U(t),\z(t))\bu_2(t),\ t>0, \\
				\bz(0)&=\boldsymbol{0}.
			\end{aligned}
			\right. 
		\end{equation} 
		Taking the inner product with $\z(\cdot)$ in \eqref{eqn-constrained-NSE-D} and using the fact that $(\B(\bu_1,\z),\z)=0$, we deduce 
		\begin{align}\label{eqn-difference-12}
			&\frac{1}{2}\frac{\d}{\d t}\|\z(t)\|_{\H}^2+\nu\|\nabla\z(t)\|_{\H}^2\nonumber\\&=\nu\|\nabla\bu_1(t)\|_{\H}^2\|\z(t)\|_{\H}^2+\nu\left(\|\nabla\bu_1(t)\|_{\H}^2-\|\nabla\bu_2(t)\|_{\H}^2\right)(\bu_2(t),\z(t))-(\B(\z(t),\bu_2(t)),\z(t))\nonumber\\&\quad-(\U(t),\bu_1(t))\|\z(t)\|_{\H}^2-(\U(t),\z(t))(\bu_2(t),\z(t)),
		\end{align}
	for a.e. $t\in[0,T]$. Using the Cauchy-Schwarz, triangle,  and Young inequalities, we estimate $\nu\left(\|\nabla\bu_1\|_{\H}^2-\|\nabla\bu_2\|_{\H}^2\right)(\bu_2,\z)$ as 
		\begin{align}\label{eqn-difference-13}
			\nu	\left(\|\nabla\bu_1\|_{\H}^2-\|\nabla\bu_2\|_{\H}^2\right)(\bu_2,\z)&\leq 	\nu\left(\|\nabla\bu_1\|_{\H}+\|\nabla\bu_2\|_{\H}\right)\|\nabla(\bu_1-\bu_2)\|_{\H}\|\z\|_{\H}\nonumber\\&\leq\frac{\nu}{4}\|\nabla\z\|_{\H}^2+\nu\left(\|\nabla\bu_1\|_{\H}+\|\nabla\bu_2\|_{\H}\right)^2\|\z\|_{\H}^2.
		\end{align}
		Using H\"older's, Ladyzhenskaya's and Young's inequalities, we estimate $-(\B(\z,\bu_2),\z)$ as 
		\begin{align}\label{eqn-difference-14}
			-(\B(\z,\bu_2),\z)&\leq\|\nabla\bu_2\|_{\H}\|\z\|_{\mathbb{L}^4}^2\leq\sqrt{2}\|\nabla\bu_2\|_{\H}\|\nabla\z\|_{\H}\|\z\|_{\H}\leq\frac{\nu}{4}\|\nabla\z\|_{\H}^2+\frac{1}{\nu}\|\nabla\bu_2\|_{\H}^2\|\z\|_{\H}^2. 
		\end{align}
		Finally, using the fact that $\bu_1(t),\bu_2(t)\in\mathcal{M}$ for all $t\in[0,T]$, and the Cauchy-Schwarz inequality, we estimate the term $-(\U,\bu_1)\|\z\|_{\H}^2-(\U,\z)(\bu_2,\z)$   as
		\begin{align}\label{eqn-difference-15}
			&-(\U,\bu_1)\|\z\|_{\H}^2-(\U,\z)(\bu_2,\z)\leq 2\|\U\|_{\H}\|\z\|_{\H}^2. 
		\end{align}
		Using \eqref{eqn-difference-13}-\eqref{eqn-difference-15} in \eqref{eqn-difference-12}, we find 
		\begin{align}\label{eqn-difference-16}
			&	\|\z(t)\|_{\H}^2+\nu\int_0^t\|\nabla\z(s)\|_{\H}^2\d s\nonumber\\&\leq\|\z(0)\|_{\H}^2+2\int_0^t\left[3\nu\|\nabla\bu_1(s)\|_{\H}^2+\left(3\nu+\frac{1}{\nu}\right)\|\nabla\bu_2(s)\|_{\H}^2+2\|\U(s)\|_{\H}\right]\|\z(s)\|_{\H}^2\d s,
		\end{align}
		for all $t\in[0,T]$. An application of Gr\"onwall's inequality in \eqref{eqn-difference-16} yields 
		\begin{align}
			&	\|\z(t)\|_{\H}^2+\nu\int_0^t\|\nabla\z(s)\|_{\H}^2\d s\nonumber\\&\leq \|\z(0)\|_{\H}^2\exp\left\{2\int_0^T\left[3\nu\|\nabla\bu_1(t)\|_{\H}^2+\left(3\nu+\frac{1}{\nu}\right)\|\nabla\bu_2(t)\|_{\H}^2+2\|\U(t)\|_{\H}\right]\d t\right\}.
		\end{align}
		Since $\z(0)=\boldsymbol{0}$, the uniqueness follows. 
	\end{proof}
	
	\begin{remark}
		Once can show the invariance of the manifold in the following way also.	Since $\bu_m(t)\in\mathcal{M}$ for all $t\in[0,T]$, using the strong convergence $\mathrm{C}([0,T];\H)$ (see \eqref{eqn-strong-conv}), we deduce 
		\begin{align}
			\|\bu(t)\|_{\H}=\lim_{m\to\infty}\|\bu_m(t)\|_{\H}=1, \ \text{ for all }\ t\in[0,T].
		\end{align}
	\end{remark}
	
	\section{Optimal Control Problem}\label{sec3}\setcounter{equation}{0}
	
	The main objective of this section is to prove the existence of an optimal control that minimizes the following  cost functional:
	\begin{equation}\label{cost}
		\begin{aligned}
			\mathscr{J}(\bu,\U) &:= \frac{1}{2} \int_0^T\|\bu(t)-\bu_d\|_{\H}^2 \d t+\frac{1}{2}\int_0^T\|\nabla\times\bu(t)\|_{\H}^2\d t+ \int_0^T\|\U(t)\|_{\H}^2\d t\nonumber\\&\quad+\frac{1}{2}\|\bu(T)-\bu^f\|_{\H}^2,
		\end{aligned}
	\end{equation}
	subject to the constraint  \eqref{eqn-constrained-NSE-N}. 
	In \eqref{cost}, the function $\bu_d \in \mathrm{L}^2(0,T;\H)$ denotes the desired (or reference) velocity field, while $\bu^f \in \H$ represents the target velocity at the final time $T$. From a physical standpoint, our objective is to determine the control $\U$ such that the resulting velocity field $\bu(\cdot)$ remains as close as possible, in the sense made precise by \eqref{control problem}, to the desired velocity $\bu_d$, while simultaneously minimizing turbulence (i.e., minimizing enstrophy).
	Since $\bu(\cdot)$ is divergence-free and satisfies the Dirichlet boundary condition, we have the identity
$\|\mathrm{curl \ }\bu\|_{\H}=\|\nabla\times\bu\|_{\H}=\|\nabla\bu\|_{\H}=\|\A^{1/2}\bu\|_{\H}$.
	Hence, in the remainder of the paper, we replace $\|\nabla\times\bu\|_{\H}$ with $\|\nabla\bu\|_{\H}$ in \eqref{cost}. We define $$\mathrm{W}^{1,2}(0,T;\mathbb{X},\mathbb{Y}):=\left\{\bu\in\mathrm{L}^2(0,T;\mathbb{X}):\frac{\d\bu}{\d t}\in\mathrm{L}^2(0,T;\mathbb{Y})\right\}.$$
	We observe by the Lions-Magenes lemma that if $\mathbb{X}=\V$ and $\mathbb{Y}=\V^{\prime}$, then $ \mathrm{W}^{1,2}(0,T;\V,\V^{\prime})\hookrightarrow\C([0,T];\H)$ and if $\mathbb{X}=\mathrm{D}(\A)$ and $\mathbb{Y}=\H$, then   $ \mathrm{W}^{1,2}(0,T;\mathrm{D}(\A),\H)\hookrightarrow\C([0,T];\V)$.

	\begin{definition}[Admissible class]\label{definition 1}
		The \emph{admissible class} $\mathscr{A}_{\mathrm{ad}}$ of pairs $$(\bu,\U)\in\mathrm{W}^{1,2}(0,T;\mathrm{D}(\A),\H)\cap\mathcal{M}\times\mathrm{L}^{2}(0,T;\mathbb{H})=:\mathcal{X} $$ is defined as the set of states $\bu(\cdot)$ solving the system \eqref{eqn-constrained-NSE-N} with the control $\U \in \mathscr{U}_{\mathrm{ad}}:=\mathrm{L}^{2}(0,T;\mathbb{H})$. That is,
		\begin{align*}
			\mathscr{A}_{\mathrm{ad}}&:=\big\{(\bu,\U) :\bu\in\mathrm{W}^{1,2}(0,T;\mathrm{D}(\A),\H)\cap\mathcal{M} \text{ is \text{the unique strong  solution} of }\eqref{eqn-constrained-NSE-N} \nonumber\\&\qquad \text{ with the control }\U\in\mathrm{L}^{2}(0,T;\mathbb{H})\big\}.
		\end{align*}
	\end{definition}
We work in the product space
$\mathcal{X},$
endowed with the natural weak topology. It is clear that $\mathscr{A}_{\mathrm{ad}}$ is nonempty, since for every $\U \in \mathscr{U}_{\mathrm{ad}}$, there exists a \emph{unique strong solution} to the system \eqref{eqn-constrained-NSE-N}.	Note that the admissible set $\mathscr{A}_{\mathrm{ad}}$ is \emph{not convex,}  but is \emph{sequentially weakly closed}.
	Let
	$(\bu^n,\U^n)\subset\mathscr A_{\mathrm{ad}}$ be a sequence of admissible
	pairs and suppose
	\[
	(\bu^n,\U^n)\xrightarrow{w} (\bu^*,\U^*) \ \text{weakly in }\ \mathcal{X}.
	\]
Then one can show that  $(\bu^*,\U^*)\in\mathscr A_{\mathrm{ad}}$, that is, \(\bu^*\) is the
	(unique) strong solution of the constrained state equation driven by \(\U^*\)
	and \(\|\bu^*(t)\|_{\H}=1\) for all \(t\) (see Proof of Theorem \ref{thm-exis-optimal} below).
	Accordingly, the optimal control problem under consideration can be formulated as
	\begin{align}\label{control problem}
		\min_{(\bu,\U) \in \mathscr{A}_{\mathrm{ad}}} \mathscr{J}(\bu,\U).
	\end{align}
	A solution to \eqref{control problem} is referred to as an \emph{optimal solution}. The corresponding pair $(\bu^*, \U^*)$ is called the \emph{optimal pair}, and the control $\U^*$ is termed the \emph{optimal control}.
	
	Let us now show that the problem \eqref{control problem} has at least one solution. 
	\begin{theorem}\label{thm-exis-optimal}
		The problem \eqref{control problem} has at least one solution $(\bu^*,\U^*)\in\mathscr{A}_{\mathrm{ad}}$. 
	\end{theorem}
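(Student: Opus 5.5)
We follow the direct method of the calculus of variations. Since $\mathscr{J}\geq 0$ and $\mathscr{A}_{\mathrm{ad}}\neq\emptyset$ by Theorem \ref{thm-exis-unique}, the infimum $\mathscr{J}^*:=\inf_{(\bu,\U)\in\mathscr{A}_{\mathrm{ad}}}\mathscr{J}(\bu,\U)$ is finite. Choose a minimizing sequence $(\bu^n,\U^n)\in\mathscr{A}_{\mathrm{ad}}$ with $\mathscr{J}(\bu^n,\U^n)\to\mathscr{J}^*$.

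The term $\int_0^T\|\U^n(t)\|_{\H}^2\d t$ in \eqref{cost} makes $\{\U^n\}$ bounded in $\mathrm{L}^2(0,T;\H)$. Inserting this into the energy estimate \eqref{eqn-main-energy} with the fixed initial datum $\bu_0\in\V\cap\mathcal{M}$ gives uniform bounds for $\bu^n$ in $\mathrm{L}^\infty(0,T;\V)\cap\mathrm{L}^2(0,T;\mathrm{D}(\A))$ and for $\frac{\d\bu^n}{\d t}$ in $\mathrm{L}^2(0,T;\H)$. By Banach--Alaoglu and the Aubin--Lions--Simon lemma, exactly as in \eqref{eqn-weak-conv}--\eqref{eqn-strong-conv}, we extract a subsequence such that
\begin{align*}
\U^n\xrightarrow{w}\U^*\ \text{ in }\ \mathrm{L}^2(0,T;\H),\qquad
\bu^n\xrightarrow{w}\bu^*\ \text{ in }\ \mathrm{W}^{1,2}(0,T;\mathrm{D}(\A),\H),
\end{align*}
together with $\bu^n\to\bu^*$ strongly in $\mathrm{L}^2(0,T;\V)\cap\C([0,T];\H)$. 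The strong $\C([0,T];\H)$ convergence immediately gives $\|\bu^*(t)\|_{\H}=1$ for all $t$ and $\bu^*(0)=\bu_0$.

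The main step, and the only delicate one, is to show that $\bu^*$ solves \eqref{eqn-constrained-NSE-N} with control $\U^*$. The linear terms pass to the limit weakly. The terms $\B(\bu^n,\bu^n)$ and $\|\nabla\bu^n\|_{\H}^2\bu^n$ converge strongly in $\mathrm{L}^2(0,T;\H)$ by the arguments of \eqref{eqn-bilinear-conv} and \eqref{eqn-nonlinear-conv}, which used only the uniform bounds and the strong convergences.

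The genuinely new difficulty is the control-dependent term $\U^n-(\U^n,\bu^n)\bu^n$, because now $\U^n$ converges only weakly. Here the plan is to test against $\boldsymbol{\varphi}\in\mathrm{L}^2(0,T;\H)$ and split
\begin{align*}
(\U^n,\bu^n)\bu^n-(\U^*,\bu^*)\bu^*=(\U^n,\bu^n-\bu^*)\bu^n+(\U^n,\bu^*)(\bu^n-\bu^*)+(\U^n-\U^*,\bu^*)\bu^*.
\end{align*}
The first two pieces tend to zero after pairing with $\boldsymbol{\varphi}$: this follows from $\|\bu^n\|_{\H}=1$, the boundedness of $\U^n$ in $\mathrm{L}^2(0,T;\H)$, and $\sup_t\|\bu^n-\bu^*\|_{\H}\to0$. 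For the third piece we write
\begin{align*}
\int_0^T(\U^n-\U^*,\bu^*)(\bu^*,\boldsymbol{\varphi})\d t=\int_0^T\big(\U^n-\U^*,(\bu^*,\boldsymbol{\varphi})\bu^*\big)\d t .
\end{align*}
Since $(\bu^*,\boldsymbol{\varphi})\bu^*\in\mathrm{L}^2(0,T;\H)$, this tends to zero by weak convergence. Hence the product of a weakly and a strongly convergent sequence converges weakly, and $\frac{\d\bu^*}{\d t}=\mathcal{F}(\bu^*)$ holds in $\mathrm{L}^2(0,T;\H)$ with control $\U^*$. By the uniqueness part of Theorem \ref{thm-exis-unique}, $\bu^*$ is the unique strong solution for $\U^*$, so $(\bu^*,\U^*)\in\mathscr{A}_{\mathrm{ad}}$; this is also the sequential weak closedness claimed earlier.

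Finally we show $\mathscr{J}(\bu^*,\U^*)\leq\liminf_n\mathscr{J}(\bu^n,\U^n)=\mathscr{J}^*$. The tracking term and the terminal term converge, by the strong convergence in $\C([0,T];\H)$, which includes convergence of $\bu^n(T)$ in $\H$. The enstrophy term converges by the strong convergence in $\mathrm{L}^2(0,T;\V)$. The control term is weakly lower semicontinuous, being a norm in $\mathrm{L}^2(0,T;\H)$. Hence $(\bu^*,\U^*)$ attains the infimum and is an optimal pair.
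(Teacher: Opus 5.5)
Your proposal is correct and follows the paper's proof: a minimizing sequence, uniform bounds from the energy estimate \eqref{eqn-main-energy}, Banach--Alaoglu and Aubin--Lions--Simon compactness, and weak passage to the limit in $(\U^n,\bu^n)\bu^n$ using the strong $\C([0,T];\H)$ convergence of the states. Then, as in the paper, uniqueness places $(\bu^*,\U^*)$ in $\mathscr{A}_{\mathrm{ad}}$ and lower semicontinuity of $\mathscr{J}$ finishes the argument. Your termwise treatment of that last step is slightly cleaner than the paper's appeal to convexity and continuity of $\mathscr{J}$ on $\mathrm{L}^2(0,T;\V)\times\mathscr{U}_{\mathrm{ad}}$, which does not explicitly handle the terminal term $\frac{1}{2}\|\bu(T)-\bu^f\|_{\H}^2$; you use strong convergence for the tracking, enstrophy and terminal terms and weak lower semicontinuity only for the control norm.
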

	\begin{proof}
		Let $(\bu^n,\U^n)\in\mathscr{A}_{\mathrm{ad}}$ be a minimizing sequence of the problem \eqref{control problem}, that is, 
		\begin{align}
			\inf(\text{P})\leq\mathscr{J}(\bu^n,\U^n)\leq \inf(\text{P})+\frac{1}{n},
		\end{align}
		where 
	\begin{equation}\label{eqn-constrained-NSE}
		\left\{
		\begin{aligned}
			\frac{\d\bu^n(t)}{\d t}&=-\nu\A\bu^n(t)+\nu\|\nabla\bu^n(t)\|_{\H}^2\bu^n(t)-\B(\bu^n(t),\bu^n(t))\\&\quad+\U^n(t)-(\U^n(t),\bu^n(t))\bu^n(t),\ t\in(0,T), \\
			\bu^n(0)&=\bu_0,
		\end{aligned}
		\right. 
	\end{equation} 
	where $\bu_0\in\V\cap\mathcal{M}$ and $\U^n\in\mathrm{L}^2(0,T;\H)$ is the control acting on the system. 

	Using similar calculations as the energy estimates \eqref{eqn-ener-est-1}, \eqref{eqn-ener-est-2}, and \eqref{eqn-ener-est-5}, together with the Banach-Alaoglu theorem, we can extract a subsequence (again denoted by the same symbol) such that
	\begin{equation}\label{eqn-weak-conv-1}
		\left\{
		\begin{aligned}
			\bu^n&\xrightarrow{w^*}\bu^* \ \text{ in }\ \mathrm{L}^{\infty}(0,T;\V),\\
			\bu^n&\xrightarrow{w}\bu^* \ \text{ in }\ \mathrm{L}^{2}(0,T;\mathrm{D}(\A)),\\
			\frac{\d\bu^n}{\d t}&\xrightarrow{w}\frac{\d\bu^*}{\d t} \ \text{ in }\ \mathrm{L}^{2}(0,T;\H),\\
			\U^n&\xrightarrow{w}\U^*\ \text{ in }\ \mathrm{L}^2(0,T;\H). 
		\end{aligned}
		\right. 
	\end{equation}
	The above convergences, combined with the Aubin-Lions lemma, also lead to the following additional convergences:
	\begin{equation}\label{eqn-strong-conv-1}
		\left\{
		\begin{aligned}
			\bu^n&\to\bu^* \ \text{ in }\ \mathrm{L}^{2}(0,T;\V),\\
			\bu^n&\to\bu^* \ \text{ in }\ \mathrm{C}([0,T];\H). 
		\end{aligned}
		\right. 
	\end{equation}
	Since $\bu^n(t)\in\mathcal{M}$ for all $t\in[0,T]$, using the strong convergence in $\mathrm{C}([0,T];\H)$, we infer
	\begin{align}\label{eqn-est-man-1}
		\|\bu^*(t)\|_{\H}=\lim_{n\to\infty}\|\bu^n(t)\|_{\H}=1, \ \text{ for all }\ t\in[0,T].
	\end{align}
	The convergences of the linear, bilinear and nonlinear terms are clear from \eqref{eqn-weak-conv-1}, \eqref{eqn-bilinear-conv}, \eqref{eqn-nonlinear-conv}, respectively. 
	We need to justify the convergences of the terms involving control terms only. 
	The convergence of the geometric coupling scalar \((\U^n,\bu^n)\) can be shown in the following way.  Note that
	\(\bu^n\to\bu^*\) strongly in \(\mathrm{C}([0,T];\H)\) and \(\U^n\xrightarrow{w}\U^*\)
	in \(\mathrm{L}^2(0,T;\H)\). Hence for any test function \(\psi\in \mathrm{C}_c^\infty((0,T))\),
	\[
	\int_0^T (\U^n(t),\bu^n(t))\,\psi(t)\d t
	= \int_0^T (\U^n(t),\bu^*(t))\psi(t)\d t
	+ \int_0^T (\U^n(t),\bu^n(t)-\bu^*(t))\psi(t)\d t.
	\]
	The first integral tends to \(\int_0^T (\U^*(t),\bu^*(t))\psi(t)\d t\) by
	weak convergence of \(\U_n\) and fixed \(\bu^*\); the second term tends to
	zero because \(\U_n\) is uniformly bounded in \(\mathrm{L}^2(0,T;\H)\) and
	\(\bu_n\to\bu^*\) strongly in \(\C([0,T];\H)\). Thus
	\((\U^n,\bu^n)\xrightarrow{w} (\U^*,\bu^*)\) in the sense of distributions. Since $\mathrm{C}_c^\infty((0,T))$ is dense in $\mathrm{L}^2(0,T)$, the above convergence holds true for all $\psi\in\mathrm{L}^2(0,T)$. In particular $(\bu^*,\boldsymbol{\phi})\in\mathrm{L}^2(0,T)$. 
	Hence for all $\boldsymbol{\phi}\in\mathrm{L}^2(0,T;\H)$, we find 
	\begin{align*}
		&	\int_0^T\left((\U^n(t),\bu^n(t))\bu^n(t),\boldsymbol{\phi}(t)\right)\d t\nonumber\\&= 	\int_0^T(\U^n(t),\bu^n(t))\left(\bu^n(t)-\bu^*(t),\boldsymbol{\phi}(t)\right)\d t\nonumber\\&\quad+ \int_0^T\left[(\U^n(t),\bu^n(t))-(\U^*(t),\bu^*(t))\right]\left(\bu^*(t),\boldsymbol{\phi}(t)\right)\d t\nonumber\\&\leq\sup_{t\in[0,T]}\|\bu^n(t)-\bu^*(t)\|_{\H}\left(\int_0^T\|\U^n(t)\|_{\H}^2\d t\right)^{1/2}\left(\int_0^T\|\boldsymbol{\phi}(t)\|_{\H}^2\d t\right)^{1/2} \nonumber\\&\quad+ \int_0^T\left[(\U^n(t),\bu^n(t))-(\U^*(t),\bu^*(t))\right]\left(\bu^*(t),\boldsymbol{\phi}(t)\right)\d t,
	\end{align*}
	where we have used the fact that $\bu^n,\bu^*\in\mathcal{M}$. 	Therefore, we finally have 
	\[(\U^n,\bu^n)\bu^n \xrightarrow{w} (\U^*,\bu^*)\bu^*\ \text{ in }\ 
	\mathrm{L}^2(0,T;\H).\]

By passing to the limit in equation \eqref{eqn-constrained-NSE}, we establish the existence of a strong solution $\bu^*$ to problem \eqref{eqn-constrained-NSE}. Since $\bu^*$ is the unique strong solution of \eqref{eqn-constrained-NSE-N} corresponding to the control $\U^* \in \mathrm{L}^2(0,T; \mathbb{H})$, it follows that the entire sequence $\{\bu_n\}$ converges to $\bu^*$. Consequently, we have $(\bu^*, \U^*) \in \mathscr{A}_{\mathrm{ad}}$.

We now demonstrate that $(\bu^*, \U^*)$ is indeed a minimizer, that is,
$
\mathscr{J} = \mathscr{J}(\bu^*, \U^*).
$
Since the cost functional $\mathscr{J}(\cdot, \cdot)$ is continuous and convex on $\mathrm{L}^2(0,T; \V) \times \mathscr{U}_{\mathrm{ad}}$ (see Propositions III.1.6 and III.1.10 in \cite{EI+TT-83}), it is weakly lower semicontinuous (cf. Proposition II.4.5 in \cite{EI+TT-83}).
Therefore, for any sequence
$
(\bu_n, \U_n) \xrightarrow{w} (\bu^*, \U^*) \  \text{ in} \ \mathrm{L}^2(0,T; \V) \times \mathrm{L}^2(0,T; \mathbb{U}),
$
we have
$
\mathscr{J}(\bu^*, \U^*) \le \liminf_{n \to \infty} \mathscr{J}(\bu_n, \U_n).
$
Consequently,
$$
\mathscr{J} \le \mathscr{J}(\bu^*, \U^*) \le \liminf_{n \to \infty} \mathscr{J}(\bu_n, \U_n)
= \lim_{n \to \infty} \mathscr{J}(\bu_n, \U_n) = \mathscr{J},
$$
which shows that $(\bu^*, \U^*) \in \mathscr{A}_{\mathrm{ad}}$ is indeed a minimizer of the control problem \eqref{control problem}.
	\end{proof}

	\subsection{The linearized system} 
In this subsection, we analyze the linearized system and demonstrate its global well-posedness under the assumption that $\bu(\cdot)$ is a \emph{strong solution} of system \eqref{eqn-constrained-NSE-N} corresponding to the control $\U(\cdot)$, and that $\W \in \mathrm{L}^2(0,T;\H)$.
Subsequently, we establish that when $\W$ is chosen as $\U-(\U,\bu_{\U^*})\bu_{\U^*}$, the corresponding  function $\boldsymbol{\eta}$ lies within the tangent cone of admissible perturbations (see Proposition \ref{lem3.8} below). We consider the following linearized system: 
	\begin{equation}\label{4}
		\left\{
		\begin{aligned}
			\frac{\d\boldsymbol{\eta}(t)}{\d t}&=-\nu\A\boldsymbol{\eta}(t)+\nu\|\nabla\bu(t)\|_{\H}^2\boldsymbol{\eta}(t)+2\nu(\nabla\bu(t),\nabla\boldsymbol{\eta}(t))\bu(t)-\B'(\bu(t))\boldsymbol{\eta}(t)\\&\quad+\W(t)-[(\U(t),\bu(t))\boldsymbol{\eta}(t)+(\U(t),\boldsymbol{\eta}(t))\bu(t)],\ \text{ in }\ \V',\\
			\boldsymbol{\eta}(0)&=\boldsymbol{\eta}_0,
		\end{aligned}
		\right.
	\end{equation}
	for a.e. $t\in[0,T]$,  where  $\B'(\bu)\boldsymbol{\eta}=\B(\bu,\boldsymbol{\eta})+\B(\boldsymbol{\eta},\bu)$ and  $\boldsymbol{\eta}_0\in\H$.  We begin by deriving a formal \emph{a priori} energy estimate satisfied by system \eqref{4}. A rigorous justification of this estimate can be obtained using the standard Faedo-Galerkin approximation method. Taking the inner product of the first equation in \eqref{4} with $\boldsymbol{\eta}(\cdot)$, using  the fact that $\langle\B(\bu,\boldsymbol{\eta}),\boldsymbol{\eta}\rangle=0$, $\bu(t)\in\mathcal{M}$ for all $t\in[0,T]$, applying H\"older's, Ladyzhenskaya's and Young's inequalities, we obtain 
		\begin{align}
		&\frac{1}{2}\frac{\d}{\d t}\|\boldsymbol{\eta}(t)\|_{\H}^2+\nu\|\nabla\boldsymbol{\eta}(t)\|_{\H}^2
		\nonumber\\&=\nu\|\nabla\bu(t)\|_{\H}^2\|\boldsymbol{\eta}(t)\|_{\H}^2+2\nu (\nabla\bu(t),\nabla\boldsymbol{\eta}(t))(\bu(t),\boldsymbol{\eta}(t))-\langle\B(\boldsymbol{\eta}(t),\bu(t)),\boldsymbol{\eta}(t)\rangle\nonumber\\&\quad+(\W(t),\boldsymbol{\eta}(t))-(\U(t),\bu(t))\|\boldsymbol{\eta}(t)\|_{\H}^2-(\U(t),\boldsymbol{\eta}(t))(\bu(t),\boldsymbol{\eta}(t))\nonumber\\&\leq \frac{\nu}{4} \|\nabla\boldsymbol{\eta}(t)\|_{\H}^2+5\nu\|\nabla\bu(t)\|_{\H}^2\|\boldsymbol{\eta}(t)\|_{\H}^2+\|\nabla\bu(t)\|_{\H}\|\boldsymbol{\eta}(t)\|_{\mathbb{L}^4}^2+\|\W(t)\|_{\H}\|\boldsymbol{\eta}(t)\|_{\H}\nonumber\\&\quad+2\|\U(t)\|_{\H}\|\boldsymbol{\eta}(t)\|_{\H}^2\nonumber\\&\leq \frac{\nu}{2} \|\nabla\boldsymbol{\eta}(t)\|_{\H}^2+\left(5\nu+\frac{\sqrt{2}}{\nu}\right)\|\nabla\bu(t)\|_{\H}^2\|\boldsymbol{\eta}(t)\|_{\H}^2+\frac{1}{2}\|\W(t)\|_{\H}^2+\frac{1}{2}\|\boldsymbol{\eta}(t)\|_{\H}^2\nonumber\\&\quad+2\|\U(t)\|_{\H}\|\boldsymbol{\eta}(t)\|_{\H}^2,
	\end{align}
	for a.e. $t\in[0,T]$. Integrating the above inequality from $0$ to $t$ and then an applying Gr\"onwall's inequality, we find 
	\begin{align}
		&	\|\boldsymbol{\eta}(t)\|_{\H}^2+\nu\int_0^t\|\nabla\boldsymbol{\eta}(s)\|_{\H}^2\d s
		\nonumber\\&\leq \left\{\|\boldsymbol{\eta}_0\|_{\H}^2+\int_0^T\|\W(t)\|_{\H}^2\d t\right\}\exp\left\{T+2\left(5\nu+\frac{\sqrt{2}}{\nu}\right)\int_0^T\|\nabla\bu(t)\|_{\H}^2\d t+4\int_0^T\|\U(t)\|_{\H}\d t\right\},
	\end{align}
	for all $t\in[0,T]$. Therefore, we have $\boldsymbol{\eta}\in\mathrm{L}^{\infty}(0,T;\H)\cap\mathrm{L}^2(0,T;\V)$. Moreover, for all $\boldsymbol{\varphi}\in\mathrm{L}^2(0,T;\V)$, we have 
{\small{	\begin{align*}
		&\left|\int_0^T\left\langle\frac{\d\boldsymbol{\eta}(t)}{\d t},\boldsymbol{\varphi}(t)\right\rangle\d t\right|\nonumber\\&\leq\nu\left|\int_0^T(\nabla\boldsymbol{\eta}(t),\boldsymbol{\varphi}(t))\d t\right|+\nu\left|\int_0^T\|\nabla\bu(t)\|_{\H}^2(\boldsymbol{\eta}(t),\boldsymbol{\varphi}(t))\d t\right|\nonumber\\&\quad+2\nu\left|\int_0^T(\nabla\bu(t),\nabla\boldsymbol{\eta}(t))(\bu(t),\boldsymbol{\varphi}(t))\d t\right|+\left|\int_0^T\langle\B'(\bu(t))\boldsymbol{\eta}(t),\boldsymbol{\varphi}(t)\rangle\d t\right|+\left|\int_0^T(\W(t),\boldsymbol{\varphi}(t))\d t\right|\nonumber\\&\quad+\left|\int_0^T(\U(t),\bu(t))(\boldsymbol{\eta}(t),\boldsymbol{\varphi}(t))\d t\right|+\left|\int_0^T(\U(t),\boldsymbol{\eta}(t))(\bu(t),\boldsymbol{\varphi}(t))\d t\right|\nonumber\\&\leq \Bigg\{\nu\left(\int_0^T\|\nabla\boldsymbol{\eta}(t)\|_{\H}^2\d t\right)^{1/2}+\frac{\nu T^{1/2}}{\lambda_1^{1/2}}\sup_{t\in[0,T]}\|\nabla\bu(t)\|_{\H}^2\sup_{t\in[0,T]}\|\boldsymbol{\eta}(t)\|_{\H}\nonumber\\&\quad+\frac{2\nu}{\lambda_1^{1/2}}\sup_{t\in[0,T]}\|\nabla\bu(t)\|_{\H}\left(\int_0^T\|\nabla\boldsymbol{\eta}(t)\|_{\H}^2\d t\right)^{1/2}+2\left(\int_0^T\|\bu(t)\|_{\mathbb{L}^4}^4\d t\right)^{1/4}\left(\int_0^T\|\boldsymbol{\eta}(t)\|_{\mathbb{L}^4}^4\d t\right)^{1/4}\nonumber\\&\quad+\frac{1}{\lambda_1^{1/2}}\left(\int_0^T\|\W(t)\|_{\H}^2\d t\right)^{1/2}+\frac{2}{\lambda_1^{1/2}}\sup_{t\in[0,T]}\|\boldsymbol{\eta}(t)\|_{\H}\left(\int_0^T\|\U(t)\|_{\H}^2\d t\right)^{1/2}\Bigg\}\left(\int_0^T\|\nabla\boldsymbol{\varphi}(t)\|_{\H}^2\d t\right)^{1/2},
	\end{align*}}}
	where $\lambda_1$ is the first eigenvalue of the Stokes operator. Therefore, we deduce $\frac{\d\boldsymbol{\eta}}{\d t}\in\mathrm{L}^2(0,T;\V^{\prime})$ and an application of the Lions-Magenes lemma yields $\boldsymbol{\eta}\in\mathrm{C}([0,T];\H)$.

Based on the above discussion, we now state the existence and uniqueness result for weak solutions to problem \eqref{4}.
	\begin{theorem}\label{linear}
		Let  $\boldsymbol{\eta}_0\in\mathbb{H}$ and $\W,\U\in\mathrm{L}^2(0,T;\H)$ be given.  Then,
		there exists \emph{a unique weak solution} to problem \eqref{4} satisfying
		$$\boldsymbol{\eta}\in\mathrm{L}^{\infty}(0,T;\H)\cap\mathrm{L}^2(0,T;\V), $$ with $\frac{\d\boldsymbol{\eta}}{\d t}\in\mathrm{L}^{2}(0,T;\mathbb{V}'),$ (so that $\boldsymbol{\eta}\in\C([0,T];\H)$),  the energy estimate 
		\begin{align*}
			\sup_{t\in[0,T]}	\|\boldsymbol{\eta}(t)\|_{\H}^2+\nu\int_0^T\|\boldsymbol{\eta}(t)\|_{\V}^2\d t
			\leq C_{\nu,T}\left\{\|\boldsymbol{\eta}_0\|_{\H}^2+\int_0^T\|\W(t)\|_{\H}^2\d t\right\},
			\end{align*}
		and the energy equality
		\begin{align*}
			&	\|\boldsymbol{\eta}(t)\|_{\H}^2+2\nu\int_0^t\|\boldsymbol{\eta}(s)\|_{\V}^2\d s\nonumber\\&=\|\boldsymbol{\eta}_0\|_{\H}^2+2\nu\int_0^t\|\nabla\bu(s)\|_{\H}^2\|\boldsymbol{\eta}(s)\|_{\H}^2\d s+4\nu\int_0^t(\nabla\bu(s),\nabla\boldsymbol{\eta}(s))(\bu(s),\boldsymbol{\eta}(s))\d s\nonumber\\&\quad-2\int_0^t\langle\B(\boldsymbol{\eta}(s),\bu(s)),\boldsymbol{\eta}(s)\rangle\d s+2\int_0^t(\W(s),\boldsymbol{\eta}(s))\d s\nonumber\\&\quad-2\int_0^t([(\U(s),\bu(s))\boldsymbol{\eta}(s)+(\U(s),\boldsymbol{\eta}(s))\bu(s)],\boldsymbol{\eta}(s))\d s,
		\end{align*}
		for all $t\in[0,T]$.
	\end{theorem}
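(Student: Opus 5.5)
The proof follows the standard route for linear parabolic problems with time-dependent coefficients. Existence comes from a Faedo--Galerkin scheme, uniform bounds from the formal a priori estimate just derived, and passage to the limit by weak compactness. Uniqueness follows from linearity and the same estimate. The energy equality is obtained from the Lions--Magenes lemma.

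Throughout, $\bu$ is the strong solution of \eqref{eqn-constrained-NSE-N} given by Theorem \ref{thm-exis-unique}. Hence $\bu\in\C([0,T];\V)\cap\mathrm{L}^2(0,T;\mathrm{D}(\A))$ and $\|\bu(t)\|_{\H}=1$. The coefficients $\|\nabla\bu\|_{\H}^2$ and $\|\nabla\bu\|_{\H}$ are therefore bounded on $[0,T]$, and $\U\in\mathrm{L}^1(0,T;\H)$. This makes the constant $C_{\nu,T}$ in the statement finite; it also depends on $\bu$ and $\U$.

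\textbf{Galerkin approximation.} With the Stokes eigenfunctions $\{\w_k\}$ and the projection $\Pi_m$ onto $\H_m$, I seek $\boldsymbol{\eta}_m(t)\in\H_m$ solving the projected version of \eqref{4}:
\[
\boldsymbol{\eta}_m' = -\nu\A\boldsymbol{\eta}_m+\nu\|\nabla\bu\|_{\H}^2\boldsymbol{\eta}_m+2\nu(\nabla\bu,\nabla\boldsymbol{\eta}_m)\Pi_m\bu-\Pi_m\B'(\bu)\boldsymbol{\eta}_m+\Pi_m\W-(\U,\bu)\boldsymbol{\eta}_m-(\U,\boldsymbol{\eta}_m)\Pi_m\bu,
\]
with $\boldsymbol{\eta}_m(0)=\Pi_m\boldsymbol{\eta}_0$.
\begin{itemize}
\item This is a linear ODE system with $\mathrm{L}^1(0,T)$ coefficients. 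Carath\'eodory theory gives a unique absolutely continuous solution on all of $[0,T]$; no blow-up is possible for a linear system.
\item Testing with $\boldsymbol{\eta}_m$ reproduces exactly the computation preceding the theorem. Note that $(\Pi_m\bu,\boldsymbol{\eta}_m)=(\bu,\boldsymbol{\eta}_m)$ and $b(\bu,\boldsymbol{\eta}_m,\boldsymbol{\eta}_m)=0$. Gr\"onwall's inequality then gives uniform bounds on $\boldsymbol{\eta}_m$ in $\mathrm{L}^\infty(0,T;\H)\cap\mathrm{L}^2(0,T;\V)$, with the stated constant.
\item The duality estimate following the a priori bound bounds $\boldsymbol{\eta}_m'$ uniformly in $\mathrm{L}^2(0,T;\V')$, since $\Pi_m$ is a contraction on $\V$.
\end{itemize}

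\textbf{Passage to the limit.} By Banach--Alaoglu and Aubin--Lions, along a subsequence:
\begin{itemize}
\item $\boldsymbol{\eta}_m\xrightarrow{w^*}\boldsymbol{\eta}$ in $\mathrm{L}^\infty(0,T;\H)$;
\item $\boldsymbol{\eta}_m\xrightarrow{w}\boldsymbol{\eta}$ in $\mathrm{L}^2(0,T;\V)$, with $\boldsymbol{\eta}_m'\xrightarrow{w}\boldsymbol{\eta}'$ in $\mathrm{L}^2(0,T;\V')$;
\item $\boldsymbol{\eta}_m\to\boldsymbol{\eta}$ strongly in $\mathrm{L}^2(0,T;\H)$.
\end{itemize}
The equation is linear in $\boldsymbol{\eta}$, so each term converges weakly once it is tested against $\boldsymbol{\varphi}=\psi(t)\w_k$.
\begin{itemize}
\item For the nonlocal terms, write $\int_0^T(\nabla\bu,\nabla\boldsymbol{\eta}_m)(\bu,\boldsymbol{\varphi})\,\d t$. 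The map $\boldsymbol{\eta}\mapsto(\nabla\bu,\nabla\boldsymbol{\eta})(\bu,\boldsymbol{\varphi})$ is a bounded linear functional on $\mathrm{L}^2(0,T;\V)$, because $\bu\in\mathrm{L}^\infty(0,T;\V)$. The same reasoning handles $(\U,\boldsymbol{\eta}_m)(\bu,\boldsymbol{\varphi})$, using $\U\in\mathrm{L}^2(0,T;\H)$ and $\|\bu\|_{\H}=1$.
\item The bilinear terms $b(\bu,\boldsymbol{\eta}_m,\boldsymbol{\varphi})$ and $b(\boldsymbol{\eta}_m,\bu,\boldsymbol{\varphi})$ are bounded linear functionals on $\mathrm{L}^2(0,T;\V)$, by \eqref{2p9} and $\bu\in\mathrm{L}^\infty(0,T;\V)$.
\end{itemize}
Density of $\bigcup_n\C([0,T];\H_n)$ in $\mathrm{L}^2(0,T;\V)$ shows that \eqref{4} holds in $\mathrm{L}^2(0,T;\V')$. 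The initial condition follows from $\boldsymbol{\eta}\in\C([0,T];\H)$ together with $\Pi_m\boldsymbol{\eta}_0\to\boldsymbol{\eta}_0$. Finally, weak lower semicontinuity of norms passes the energy estimate to the limit.

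\textbf{Energy equality and uniqueness.} The Lions--Magenes lemma (\cite[Lemma 1.2, Chapter 3]{TR-1984}) applies because $\boldsymbol{\eta}\in\mathrm{L}^2(0,T;\V)$ and $\boldsymbol{\eta}'\in\mathrm{L}^2(0,T;\V')$. It gives $\boldsymbol{\eta}\in\C([0,T];\H)$ and
\[
\frac{\d}{\d t}\|\boldsymbol{\eta}\|_{\H}^2=2\langle\boldsymbol{\eta}',\boldsymbol{\eta}\rangle \quad\text{for a.e. } t .
\]
Pairing \eqref{4} with $\boldsymbol{\eta}$ is legitimate, since every term lies in $\mathrm{L}^2(0,T;\V')$ and $\boldsymbol{\eta}\in\mathrm{L}^2(0,T;\V)$. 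Using $b(\bu,\boldsymbol{\eta},\boldsymbol{\eta})=0$ and integrating over $(0,t)$ yields exactly the stated energy equality.

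For uniqueness, the difference of two weak solutions solves \eqref{4} with $\boldsymbol{\eta}_0=\boldsymbol{0}$ and $\W=\boldsymbol{0}$. Its energy equality, estimated as before, gives a Gr\"onwall inequality with zero data, so the difference vanishes.

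\textbf{Main obstacle.} The only delicate point is the nonlocal term $2\nu(\nabla\bu,\nabla\boldsymbol{\eta})(\bu,\boldsymbol{\eta})$. It carries a full derivative of $\boldsymbol{\eta}$, so it must be absorbed into the dissipation via
\[
2\nu|(\nabla\bu,\nabla\boldsymbol{\eta})(\bu,\boldsymbol{\eta})|\le\frac{\nu}{4}\|\nabla\boldsymbol{\eta}\|_{\H}^2+4\nu\|\nabla\bu\|_{\H}^2\|\boldsymbol{\eta}\|_{\H}^2 .
\]
This absorption works only because $\sup_t\|\nabla\bu(t)\|_{\H}<\infty$, which is why the strong-solution regularity from Theorem \ref{thm-exis-unique} is needed. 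The same bound makes this term a legitimate element of $\mathrm{L}^2(0,T;\V')$, which is what the limit passage and the Lions--Magenes argument require.
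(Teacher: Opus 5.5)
Your proposal is correct and follows essentially the paper's route: the energy estimate from testing \eqref{4} with $\boldsymbol{\eta}$ (made rigorous by Faedo--Galerkin), Gr\"onwall's inequality, the duality bound giving $\frac{\d\boldsymbol{\eta}}{\d t}\in\mathrm{L}^2(0,T;\V')$, and the Lions--Magenes lemma. You also fill in the limit passage, the energy equality, and uniqueness by linearity, which the paper leaves implicit.

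One small overstatement: the absorption-plus-Gr\"onwall step only needs $\|\nabla\bu\|_{\H}^2\in\mathrm{L}^1(0,T)$, so it does not require $\sup_t\|\nabla\bu(t)\|_{\H}<\infty$. What that $\sup$-bound actually buys you is that $(\nabla\bu,\nabla\boldsymbol{\eta})\bu$ and $\B(\boldsymbol{\eta},\bu)$ lie in $\mathrm{L}^2(0,T;\V')$.
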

	
		\begin{remark}
		In the linearized problem under our consideration (see \eqref{4a} below), we take $\W$ as $\U-(\U,\bu_{\U^*})\bu_{\U^*}$ for some control $\U\in\mathrm{L}^2(0,T;\H)$. Since $\bu_{\U^*}(t)\in\mathcal{M}$ for all $t\in[0,T]$, it is immediate that 
		\begin{align*}
			\int_0^T\|\W(t)\|_{\H}^2\d t&=\int_0^T\left\|\U(t)-(\U(t),\bu_{\U^*}(t))\bu_{\U^*}(t)\right\|_{\H}^2\d t\nonumber\\&=\int_0^T\|\U(t)\|_{\H}^2\d t-\int_0^T|(\U(t),\bu_{\U^*}(t))|^2\d t\leq 2\int_0^T\|\U(t)\|_{\H}^2\d t<\infty,
		\end{align*}
		which ensures the existence of a weak solution for the problem \eqref{4a}. 
	\end{remark}

	\subsection{The adjoint system} To obtain the first-order necessary optimality conditions, we derive the adjoint system corresponding to \eqref{eqn-constrained-NSE-N}.	Recall that the optimal control is expressed in terms of the adjoint variable. Let the adjoint variable $\p(\cdot)$ be defined as the solution to the following system:
	\begin{eqnarray}\label{adj}
		\left\{
		\begin{aligned}
			-\frac{\d\p(t)}{\d t}&=-\nu\A\p(t)+\nu\|\nabla\bu(t)\|_{\H}^2\p(t)+2\nu(\bu(t),\p(t))\A\bu(t)-(\B'(\bu(t)))^*\p (t)\\&\quad-\left[(\U(t),\bu(t))\p(t)+(\bu(t),\p(t))\U(t)\right]+(\bu(t)-\bu_d(t))+\A\bu(t),\ \text{ in }\ \V', \\ \p(T)&=\bu(T)-\bu_f,
		\end{aligned}
		\right.
	\end{eqnarray}
	for a.e. $t\in[0,T]$,  where $\langle(\B'(\bu))^*\p,\boldsymbol{q}\rangle=\langle\p,\B'(\bu)\boldsymbol{q}\rangle=\langle\B(\bu,\boldsymbol{q}),\p\rangle+\langle\B(\boldsymbol{q},\bu),\p\rangle$, for all $\bu,\p,\boldsymbol{q}\in\V$. Let us now obtain a-priori energy estimates satisfied by $\p$. Taking the inner product with $\p(\cdot)$ to the first equation in \eqref{adj}, using the fact that $\langle\B(\bu,\p),\p\rangle=0$ and  $\bu(t)\in\mathcal{M}$ for all $t\in[0,T]$, then applying H\"older's, Ladyzhenskaya's and Young's inequalities,  we find 
	\begin{align}\label{37}
		-&\frac{1}{2}\frac{\d}{\d t}\|\p(t)\|_{\H}^2+\nu\|\nabla\p(t)\|_{\H}^2\nonumber\\&=\nu\|\nabla\bu(t)\|_{\H}^2\|\p(t)\|_{\H}^2+2\nu(\bu(t),\p(t))(\nabla\bu(t),\nabla\p(t))-\langle\B(\p(t),\bu(t)),\p(t)\rangle\nonumber\\&\quad-\left[(\U(t),\bu(t))\|\p(t)\|_{\H}^2+(\bu(t),\p(t))(\U(t),\p(t))\right]+(\bu(t)-\bu_d(t),\p(t))+(\nabla\bu(t),\nabla\p(t))\nonumber\\&\leq \frac{3\nu}{4}\|\nabla\p(t)\|_{\H}^2+\left(5\nu+\frac{\sqrt{2}}{\nu}\right)\|\nabla\bu(t)\|_{\H}^2\|\p(t)\|_{\H}^2\nonumber\\&\quad+2\|\U(t)\|_{\H}\|\p(t)\|_{\H}^2+\frac{1}{2}\|\bu(t)-\bu_d\|_{\H}^2+\frac{1}{2}\|\p(t)\|_{\H}^2+\frac{1}{\nu}\|\nabla\bu(t)\|_{\H}^2,
	\end{align}
	for a.e. $t\in[0,T]$. Integrating the above inequality from $t$ to $T$, we obtain for all $t\in[0,T]$ 
	\begin{align}\label{39}
		&\|\p(t)\|_{\H}^2+\frac{\nu}{2}\int_t^T\|\nabla\p(s)\|_{\H}^2\d s\nonumber\\&\leq\|\p(T)\|_{\H}^2+\int_t^T\|\bu(s)-\bu_d\|_{\H}^2\d s+\frac{2}{\nu}\int_t^T\|\nabla\bu(s)\|_{\H}^2\d s \nonumber\\&\quad+2\left(5\nu+\frac{\sqrt{2}}{\nu}\right)\int_t^T\|\nabla\bu(s)\|_{\H}^2\|\p(s)\|_{\H}^2\d s+\int_t^T\left(1+4\|\U(s)\|_{\H}\right)\|\p(s)\|_{\H}^2\d s . 
	\end{align}
	An application of Gr\"onwall's inequality in \eqref{39} yields
	\begin{align}\label{40}
		&\sup_{t\in[0,T]}\|\p(t)\|_{\H}^2+\frac{\nu}{2}\int_0^T\|\nabla\p(t)\|_{\H}^2\d t\nonumber\\&\leq \left(\|\bu(T)-\bu_f\|_{\H}^2+\int_0^T\|\bu(t)-\bu_d\|_{\H}^2\d t+\frac{2}{\nu}\int_0^T\|\nabla\bu(t)\|_{\H}^2\d t\right)\nonumber\\&\quad\times\exp\left\{T+4\int_0^T\|\U(t)\|_{\H}\d t+2\left(5\nu+\frac{\sqrt{2}}{\nu}\right)\int_0^T\|\nabla\bu(t)\|_{\H}^2\d t\right\},
	\end{align}
	for all $t\in[0,T]$. Hence $\p\in\mathrm{L}^{\infty}(0,T;\H)\cap\mathrm{L}^2(0,T;\V)$. 
	
	For all $\boldsymbol{\varphi}\in\mathrm{L}^2(0,T;\V)$, we compute 
	\begin{align}
		&	\left|\int_0^T\langle\partial_t\p(t),\boldsymbol{\varphi}(t)\rangle\d t\right|\nonumber\\&\leq
		\nu\left|\int_0^T(\nabla\p(t),\nabla\boldsymbol{\varphi}(t))\d t\right|+\nu\left|\int_0^T\|\nabla\bu(t)\|_{\H}^2(\p(t),\boldsymbol{\varphi}(t))\d t\right|\nonumber\\&\quad+2\nu\left|\int_0^T(\bu(t),\p(t))(\nabla\bu(t),\nabla\boldsymbol{\varphi}(t))\d t\right|+\left|\int_0^T\langle(\B'(\bu(t)))^*\p (t),\boldsymbol{\varphi}(t)\rangle\d t\right|\nonumber\\&\quad+\left|\int_0^T(\left[(\U(t),\bu(t))\p(t)+(\bu(t),\p(t))\U(t)\right],\boldsymbol{\varphi}(t))\d t\right|\nonumber\\&\quad+\left|\int_0^T(\bu(t)-\bu_d(t),\boldsymbol{\varphi}(t))\d t\right|+\left|\int_0^T(\nabla\bu(t),\nabla\boldsymbol{\varphi}(t))\d t\right|\nonumber\\&\leq\Bigg\{\nu\left(\int_0^T\|\nabla\p(t)\|_{\H}^2\d t\right)^{1/2}+\nu\lambda_1\sup_{t\in[0,T]}\|\p(t)\|_{\H}\int_0^T\|\nabla\bu(t)\|_{\H}^2\d t\nonumber\\&\quad+2\nu\sup_{t\in[0,T]}\|\p(t)\|_{\H}\left(\int_0^T\|\nabla\bu(t)\|_{\H}^2\d t\right)^{1/2}+\left(\int_0^T\|\bu(t)\|_{\mathbb{L}^4}^4\d t\right)^{1/4}\left(\int_0^T\|\p(t)\|_{\mathbb{L}^4}^4\d t\right)^{1/4}\nonumber\\&\quad+\left(\frac{2}{\lambda_1}\right)^{1/4}\sup_{t\in[0,T]}\|\nabla\bu(t)\|_{\H}\left(\int_0^T\|\p(t)\|_{\mathbb{L}^4}^2\d t\right)^{1/2}+\frac{2}{\lambda_1^{1/2}}\sup_{t\in[0,T]}\|\p(t)\|_{\H}\left(\int_0^T\|\U(t)\|_{\H}^2\d t\right)^{1/2}
		\nonumber\\&\quad+\frac{T^{1/2}}{\lambda_1^{1/2}}\sup_{t\in[0,T]}\|\bu(t)-\bu_d(t)\|_{\H}+\left(\int_0^T\|\nabla\bu(t)\|_{\H}^2\d t\right)^{1/2}\Bigg\}\left(\int_0^T\|\nabla\boldsymbol{\varphi}(t)\|_{\H}^2\d t\right)^{1/2}.
	\end{align}
	Therefore, $\frac{\d\p}{\d t}\in\mathrm{L}^2(0,T;\V^{\prime})$, so that an application of the Lions-Magenes lemma provides $\p\in\mathrm{C}([0,T];\H)$.

	The above discussion yields the following existence and uniqueness result for weak solutions of the adjoint problem \eqref{adj}.
	\begin{theorem}\label{thm3.4}
		For $\U\in\mathrm{L}^2(0,T;\H)$,
		there exists a \emph{unique weak solution} to problem \eqref{adj} satisfying
		$$\p\in\mathrm{L}^{\infty}(0,T;\H)\cap\mathrm{L}^2(0,T;\V), $$
		with $\frac{\d\p}{\d t}\in\mathrm{L}^{2}(0,T;\mathbb{V}')$ (so that $\p\in\C([0,T];\H)$), the energy estimate 
		\begin{align}\label{adjoint-energy-est}
				&\sup_{t\in[0,T]}\|\p(t)\|_{\H}^2+\frac{\nu}{2}\int_0^T\|\nabla\p(t)\|_{\H}^2\d t\nonumber\\&\leq C_{\nu,T}\left(\|\bu(T)-\bu_f\|_{\H}^2+\int_0^T\|\bu(t)-\bu_d\|_{\H}^2\d t+\int_0^T\|\nabla\bu(t)\|_{\H}^2\d t\right),
		\end{align}
			and the energy equality 
		\begin{align*}
			&	\|\p(t)\|_{\H}^2+2\nu\int_t^T\|\p(s)\|_{\V}^2\d s\nonumber\\&=\|\p_T\|_{\H}^2+2\nu\int_t^T\|\nabla\bu(s)\|_{\H}^2\|\p(s)\|_{\H}^2\d s+4\nu\int_t^T(\bu(s),\p(s))(\nabla\bu(s),\nabla\p(s))\d s\nonumber\\&\quad-2\int_t^T\langle\B(\p(s),\bu(s)),\p(s)\rangle\d s-2\int_t^T(\left[(\U(s),\bu(s))\p(s)+(\bu(s),\p(s))\U(s)\right],\p(s))\d s
			\nonumber\\&\quad+2\int_t^T(\bu(s)-\bu_d(s),\p(s))\d s+2\int_t^T(\nabla\bu(s),\nabla\p(s))\d s,
		\end{align*}
		for all $t\in[0,T]$, where $\p_T=\bu(T)-\bu_f$. 
	\end{theorem}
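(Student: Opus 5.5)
The plan is to reduce the backward adjoint problem \eqref{adj} to a forward linear problem and then treat it as Theorem \ref{linear} was treated. Set $\tau=T-t$ and $\boldsymbol{\psi}(\tau)=\p(T-\tau)$. Then $\boldsymbol{\psi}$ solves a forward linear parabolic equation with initial datum $\boldsymbol{\psi}(0)=\bu(T)-\bu_f\in\H$. Its coefficients are built from the strong solution $\bu\in\C([0,T];\V)\cap\mathrm{L}^2(0,T;\mathrm{D}(\A))$ of Theorem \ref{thm-exis-unique} and from $\U\in\mathrm{L}^2(0,T;\H)$. The forcing is $(\bu-\bu_d)+\A\bu$, which lies in $\mathrm{L}^2(0,T;\V')$ (indeed in $\mathrm{L}^2(0,T;\H)$).

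\emph{Existence.} I would use a Faedo--Galerkin scheme in $\H_m=\mathrm{span}\{\w_1,\dots,\w_m\}$, writing $\boldsymbol{\psi}_m(\tau)=\sum_k h_{km}(\tau)\w_k$ and projecting every term by $\Pi_m$. The result is a linear ODE system whose coefficients are integrable in time, because $\|\nabla\bu\|_{\H}^2$, $\|\A\bu\|_{\H}$ and $\|\U\|_{\H}$ are in $\mathrm{L}^1(0,T)$. Carathéodory's theorem then gives a unique global solution on $[0,T]$. Testing with $\boldsymbol{\psi}_m$ reproduces the computation \eqref{37}--\eqref{40} uniformly in $m$. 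The ingredients are $\langle\B(\bu,\boldsymbol{\psi}_m),\boldsymbol{\psi}_m\rangle=0$, $\|\bu\|_{\H}=1$, and the bound $|\langle\B(\boldsymbol{\psi}_m,\bu),\boldsymbol{\psi}_m\rangle|\le\sqrt2\|\nabla\bu\|_{\H}\|\boldsymbol{\psi}_m\|_{\H}\|\nabla\boldsymbol{\psi}_m\|_{\H}$. The term $2\nu(\bu,\boldsymbol{\psi}_m)(\A\bu,\boldsymbol{\psi}_m)=2\nu(\bu,\boldsymbol{\psi}_m)(\nabla\bu,\nabla\boldsymbol{\psi}_m)$ is absorbed by Young's inequality, and the forcing $(\nabla\bu,\nabla\boldsymbol{\psi}_m)$ is handled the same way. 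This yields uniform bounds in $\mathrm{L}^\infty(0,T;\H)\cap\mathrm{L}^2(0,T;\V)$, giving \eqref{adjoint-energy-est}. The dual estimate after \eqref{40} then bounds $\frac{\d\boldsymbol{\psi}_m}{\d\tau}$ uniformly in $\mathrm{L}^2(0,T;\V')$. The key inputs there are $\|\B(\bu,\boldsymbol{\varphi})\|_{\V'}$-type bounds via \eqref{2p9} and Ladyzhenskaya, with $\bu\in\mathrm{L}^4(0,T;\mathbb{L}^4)$ and $\boldsymbol{\psi}_m$ bounded in $\mathrm{L}^4(0,T;\mathbb{L}^4)$. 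Note that $(\B'(\bu))^*$ applied to $\boldsymbol{\psi}$ is paired as $\langle\B(\bu,\boldsymbol{\varphi}),\boldsymbol{\psi}\rangle+\langle\B(\boldsymbol{\varphi},\bu),\boldsymbol{\psi}\rangle$.

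\emph{Passing to the limit and uniqueness.} Banach--Alaoglu gives weak-$*$ limits, and Aubin--Lions gives strong convergence in $\mathrm{L}^2(0,T;\H)$. Since the equation is linear in $\boldsymbol{\psi}$ with fixed coefficients, each term converges weakly against test functions $\boldsymbol{\varphi}\in\C^1([0,T];\H_n)$. The adjoint convective term needs the most care: $\langle\B(\bu,\boldsymbol{\varphi}),\boldsymbol{\psi}_m\rangle$ is linear in $\boldsymbol{\psi}_m$ with $\B(\bu,\boldsymbol{\varphi})\in\mathrm{L}^2(0,T;\H)$, and $\langle\B(\boldsymbol{\varphi},\bu),\boldsymbol{\psi}_m\rangle$ is handled similarly. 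A density argument in $\mathrm{L}^2(0,T;\V)$ then concludes. The terminal condition is recovered in $\H$ as in Step 1 of Theorem \ref{thm-exis-unique}. Lions--Magenes gives $\boldsymbol{\psi}\in\C([0,T];\H)$ together with $\frac{\d}{\d\tau}\|\boldsymbol{\psi}\|_{\H}^2=2\langle\frac{\d\boldsymbol{\psi}}{\d\tau},\boldsymbol{\psi}\rangle$. Integrating this identity and reverting to $t$ yields the stated energy equality on $[t,T]$. For uniqueness, the difference of two weak solutions solves the homogeneous problem with zero terminal datum. Applying the same energy estimate, now legitimate by Lions--Magenes, and then Grönwall forces the difference to vanish.

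\emph{Main obstacle.} The delicate point is justifying that every term is a well-defined element of $\mathrm{L}^2(0,T;\V')$ at the weak regularity $\boldsymbol{\psi}\in\mathrm{L}^\infty(\H)\cap\mathrm{L}^2(\V)$. This matters especially for the nonlocal terms $2\nu(\bu,\p)\A\bu$ and $(\bu,\p)\U$ and for the adjoint convective term, since the energy equality requires the duality pairing. I would handle it through the estimates already displayed before the theorem, using $\A\bu\in\mathrm{L}^2(0,T;\H)$ from \eqref{eqn-main-energy}.
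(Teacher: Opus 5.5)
Your proposal is correct and follows essentially the paper's route. It uses the same energy test with $\p$ as in \eqref{37}--\eqref{40}, the same duality bound giving $\frac{\d\p}{\d t}\in\mathrm{L}^2(0,T;\V')$, and Lions--Magenes for continuity and the energy equality; your time reversal, Galerkin scheme, limit passage and Gr\"onwall uniqueness argument make rigorous what the paper presents only as formal a priori estimates. One small adjustment: here the compactness you get is in $\C([0,T];\V')$ rather than $\C([0,T];\H)$ as in Step 1 of Theorem \ref{thm-exis-unique}, which is still enough to identify the terminal value $\p(T)=\bu(T)-\bu_f$.
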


	\section{First Order Necessary Conditions of Optimality}\label{sec4}
In this section, we derive the first-order necessary condition for optimality associated with the control problem \eqref{control problem}. The optimal control will be expressed in terms of the corresponding adjoint variable. We start by formally deriving \emph{Pontryagin’s maximum principle} through the \emph{Hamiltonian framework}. For this purpose, we introduce the \emph{Lagrangian} defined as follows:
$$\mathscr{L}(\bu,\U) :=\frac{1}{2}\|\bu-\bu_d\|_{\H}^2 +\frac{1}{2}\|\nabla\bu\|_{\H}^2+\frac{1}{2} \|\U\|_{\H}^2, $$ so that $\mathscr{J}(\bu,\U)=\int_0^T\mathscr{L}(\bu(t),\U(t))\d t+\frac{1}{2}\|\bu(T)-\bu^f\|_{\H}^2$.
	We define the corresponding \emph{Hamiltonian} by
	$$\mathscr{H}(\bu,\U,\p):= \mathscr{L}(\bu,\U) + \langle\p, \mathcal{N}(\bu,\U) \rangle,$$ where $$\mathcal{N}(\bu,\U)=-\nu\A\bu+\nu\|\nabla\bu\|_{\H}^2\bu-\B(\bu,\bu)+\U-(\U,\bu)\bu$$ and the adjoint variable $\p(\cdot)$ satisfies the system \eqref{adjp} given below. Since $(\bu^*, \U^*)$ is an optimal pair for problem \eqref{control problem}, Pontryagin’s minimum principle yields
	\begin{eqnarray}
		\mathscr{H}(\bu^*(t),\U^*(t),\p(t)) \leq \mathscr{H}(\bu^*(t),\W,\p(t)),
	\end{eqnarray}
	for all $\W \in \mathbb{U}$ and a.e. $t\in[0,T]$. That is, the following minimum principle is satisfied by an optimal pair $(\bu^*,\U^*)\in\mathscr{A}_{\mathrm{ad}}$ obtained in Theorem \ref{thm-exis-optimal}:
	\begin{align}\label{pm}
		&\frac{1}{2} \|\U^*(t)\|_{\H}^2+(\p(t),\U^*(t)-(\U^*(t),\bu^*(t))\bu^*(t)) \nonumber\\&\leq \frac{1}{2} \|\W\|_{\H}^2+( \p(t),\W-(\W,\bu^*(t))\bu^*(t)),
	\end{align}
	for all  $\boldsymbol{W}\in \mathbb{U},$ and a.e. $t\in[0,T]$. The above relation implies
	\begin{align*}
		\frac{1}{2}\left(\|\W\|_{\H}^2-\|\U^*(t)\|_{\H}^2\right)+\left(\p(t)-(\p(t),\bu^*(t))\bu^*(t),\W-\U^*(t)\right)\geq 0.
	\end{align*}
	Since  $\U^*,\W\in\mathscr{U}_{\mathrm{ad}}=\mathrm{L}^2(0,T;\H)$, we deduce  from \eqref{pm} that 
	\begin{align*} \mbox{$-\p(t)+(\p(t),\bu^*(t))\bu^*(t) \in \partial \frac{1}{2}\|\U^*(t)\|_{\H}^2$, for a.e. $t\in[0,T],$}
		\end{align*} where $\partial h(\cdot)$ denotes the subdifferential of $h(\cdot)$. Since $\frac{1}{2}\|\U^*\|_{\H}^2$ is Fr\'echet differentiable, so that the subdifferential contains a single point, and hence we deduce 
	\begin{align}
		\U^*(t)=-\p(t)+(\p(t),\bu^*(t))\bu^*(t)=-\pi_{\bu^*(t)}(\p(t)), \ \text{ for a.e. } \ t\in[0,T]\ \text{ in }\ \H. 
	\end{align}
	Note that the optimal control $\U^*$ is orthogonal to $\bu^*$ in $\H$ ($\U^*\perp\bu^*$ in $\H$), since 
	\begin{align*}
		(\U^*(t),\bu^*(t))=-(\p(t),\bu^*(t))+(\p(t),\bu^*(t))\|\bu^*(t)\|_{\H}^2=0,\ \text{ for a.e. }\ t\in[0,T],
	\end{align*}
	where we have used the fact that $\bu^*(t)\in\mathcal{M}$ for all $t\in[0,T]$. Since $\p\in\mathrm{C}([0,T];\H)$ and $\bu^*\in\mathrm{C}([0,T];\V)$,   we infer  $\U^*(t)\in T_{\bu^*(t)}\mathcal{M},$ for all $t\in[0,T].$ 	Recall $\mathcal M=\{\bu\in\H:\|\bu\|_{\H}=1\}$ and 
	$T_{\bu}\mathcal M=\{\v\in\H:(\v,\bu)=0\}$. The tangent bundle \(T\mathcal M=\bigsqcup_{\bu\in\mathcal M}T_{\bu}\mathcal M\)
	is a fixed manifold (the disjoint union of the tangent spaces). A trajectory
	\(\bu(\cdot)\) in \(\mathcal M\) determines at each time \(t\) the
	instantaneous tangent space \(T_{\bu(t)}\mathcal M\); these spaces vary
	with time only because the base point \(\bu(t)\) varies. An optimal control \(\U^*(\cdot)\) is then a time-dependent section
	along \(\bu^*(\cdot)\), that is, for every \(t\) we have
	\(\U^*(t)\in T_{\bu^*(t)}\mathcal M\). Thus the map
$
	t\mapsto\big(\bu^*(t),\U^*(t)\big)
$
	is a curve in the fixed manifold \(T\mathcal M\). 
	
As Pontryagin’s maximum principle provides the first-order necessary conditions for optimality, we now summarize the resulting conclusions in the theorem below. The proof of this result is based on analytical methods comparable to those developed in \cite{AF+TR-90,MTM-2022,SSS-1998}, among others. The principal result of this section is formulated as follows:
	\begin{theorem}\label{main}
		Let $(\bu^*,\U^*)\in\mathscr{A}_{\mathrm{ad}}$ be an optimal solution to problem \eqref{control problem} obtained in  Theorem \ref{thm-exis-optimal}. Then, there exists \emph{a unique weak solution} $\p\in\mathrm{W}^{1,2}(0,T;\V,\V^{\prime})$ of the adjoint system:
		\begin{eqnarray}\label{adjp}
			\left\{
			\begin{aligned}
				-\frac{\d\p(t)}{\d t}&=-\nu\A\p(t)+\nu\|\nabla\bu^*(t)\|_{\H}^2\p(t)+2\nu(\bu^*(t),\p(t))\A\bu^*(t)-(\B'(\bu^*(t)))^*\p (t)\\&\quad-\left[(\U^*(t),\bu^*(t))\p(t)+\U^*(t)(\bu^*(t),\p(t))\right]+(\bu^*(t)-\bu_d(t))+\A\bu^*(t),\ \text{ in }\ \V', \\ \p(T)&=\bu^*(T)-\bu_f\in\H,
			\end{aligned}
			\right.
		\end{eqnarray}
		for	a.e. $t\in[0,T]$,  	such that
		\begin{equation}\label{3.41}
			\U^*(t)=-\pi_{\bu^*(t)}(\p(t))=-\p(t)+(\p(t),\bu^*(t))\bu^*(t), \ \text{ for all } \ t\in[0,T]\ \text{ in }\ \H. 
		\end{equation}
		Moreover $\U^*(t)\perp\bu^*(t)$  in $\H$ for all $t\in[0,T]$, that is, $\U^*(t)\in T_{\bu^*(t)}\mathcal{M}$ for all  $t\in[0,T].$ The pair $(\bu^*(\cdot),\U^*(\cdot))$ is an element
		of the tangent bundle $T\mathcal M.$
	\end{theorem}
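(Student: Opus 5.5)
The proof follows the standard Lagrangian / adjoint route. The existence of $\p$ and its regularity $\p\in\mathrm{W}^{1,2}(0,T;\V,\V')\hookrightarrow\C([0,T];\H)$ are already given by Theorem \ref{thm3.4}, applied with $\bu=\bu^*$ and $\U=\U^*$. It therefore remains to derive \eqref{3.41} rigorously, replacing the formal Hamiltonian argument above. Fix an arbitrary direction $\V_0\in\mathrm{L}^2(0,T;\H)$ (a perturbation of the control) and consider $\U_\tau=\U^*+\tau\V_0$. Since $\mathscr{U}_{\mathrm{ad}}=\mathrm{L}^2(0,T;\H)$ is a linear space, $\tau\mapsto\mathscr{J}(\bu_{\U_\tau},\U_\tau)$ has a minimum at $\tau=0$. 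Its derivative at $\tau=0$ must therefore vanish, provided the control-to-state map is differentiable.

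\textbf{Step 1 (differentiability).} I would first show that $\U\mapsto\bu_{\U}$ is Fréchet differentiable from $\mathrm{L}^2(0,T;\H)$ into $\C([0,T];\H)\cap\mathrm{L}^2(0,T;\V)$. Its derivative at $\U^*$ in direction $\V_0$ should be the weak solution $\boldsymbol{\eta}$ of \eqref{4}, with $\bu=\bu^*$, $\U=\U^*$, $\boldsymbol{\eta}_0=\0$, and
$\W=\V_0-(\V_0,\bu^*)\bu^*=\pi_{\bu^*}\V_0$. This $\boldsymbol{\eta}$ exists and is unique by Theorem \ref{linear}.

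First I would prove a Lipschitz estimate
$\|\bu_{\U_\tau}-\bu^*\|_{\C([0,T];\H)\cap\mathrm{L}^2(0,T;\V)}\le C|\tau|\|\V_0\|_{\mathrm{L}^2(0,T;\H)}$. This follows from the same energy computation as in Step 3 of Theorem \ref{thm-exis-unique}, now with an extra forcing difference, and using the uniform bound \eqref{eqn-main-energy}.

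Next I would set $\boldsymbol{\xi}=\bu_{\U_\tau}-\bu^*-\tau\boldsymbol{\eta}$ and write its equation. Its remainder terms are quadratic in $\bu_{\U_\tau}-\bu^*$: the bilinear term $\B$, the terms from $\|\nabla\bu\|_{\H}^2\bu$, and the terms from $(\U,\bu)\bu$. The same Ladyzhenskaya and Young estimates plus Grönwall should give $\|\boldsymbol{\xi}\|=o(\tau)$. I expect this to be the main technical obstacle. The cubic term $\nu\|\nabla\bu\|_{\H}^2\bu$ produces remainders like $\|\nabla(\bu_{\U_\tau}-\bu^*)\|_{\H}^2\,\bu^*$, which need the $\mathrm{L}^\infty(0,T;\V)$ bounds from \eqref{eqn-main-energy}. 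The control coupling produces $\tau(\V_0,\bu_{\U_\tau}-\bu^*)\bu_{\U_\tau}$, which is of order $O(\tau^2)$ only in the $\mathrm{L}^1$-in-time sense. Both have to be absorbed carefully.

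\textbf{Step 2 (derivative of the cost).} By the chain rule, and following the Lagrangian normalization $\frac12\|\U\|_{\H}^2$ used above, the optimality condition reads
\begin{align*}
0=\int_0^T\Big[(\bu^*-\bu_d,\boldsymbol{\eta})+(\nabla\bu^*,\nabla\boldsymbol{\eta})+(\U^*,\V_0)\Big]\d t+(\bu^*(T)-\bu^f,\boldsymbol{\eta}(T)).
\end{align*}

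\textbf{Step 3 (duality with the adjoint).} Since both $\p$ and $\boldsymbol{\eta}$ lie in $\mathrm{W}^{1,2}(0,T;\V,\V')$, the Lions--Magenes lemma gives that $t\mapsto(\p(t),\boldsymbol{\eta}(t))$ is absolutely continuous, with
$\frac{\d}{\d t}(\p,\boldsymbol{\eta})=\langle\p',\boldsymbol{\eta}\rangle+\langle\boldsymbol{\eta}',\p\rangle$.
I would substitute \eqref{4} and \eqref{adjp} into this identity and check that all state-dependent terms cancel pairwise:
\begin{itemize}
\item $\nu\A$ and $\nu\|\nabla\bu^*\|_{\H}^2$ cancel by symmetry;
\item $2\nu(\nabla\bu^*,\nabla\boldsymbol{\eta})(\bu^*,\p)$ cancels against $2\nu(\bu^*,\p)\langle\A\bu^*,\boldsymbol{\eta}\rangle$;
\item $\langle\B'(\bu^*)\boldsymbol{\eta},\p\rangle$ cancels against $\langle(\B'(\bu^*))^*\p,\boldsymbol{\eta}\rangle$;
\item $(\U^*,\bu^*)(\boldsymbol{\eta},\p)+(\U^*,\boldsymbol{\eta})(\bu^*,\p)$ cancels against its adjoint counterpart.
\end{itemize}
What remains, after integrating over $[0,T]$ with $\boldsymbol{\eta}(0)=\0$ and $\p(T)=\bu^*(T)-\bu^f$, is
\begin{align*}
(\bu^*(T)-\bu^f,\boldsymbol{\eta}(T))=-\int_0^T(\bu^*-\bu_d+\A\bu^*,\boldsymbol{\eta})\d t+\int_0^T(\p,\pi_{\bu^*}\V_0)\d t.
\end{align*}

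\textbf{Step 4 (conclusion).} Inserting this into Step 2, and noting that $\langle\A\bu^*,\boldsymbol{\eta}\rangle=(\nabla\bu^*,\nabla\boldsymbol{\eta})$, gives
$\int_0^T(\U^*+\pi_{\bu^*}\p,\V_0)\d t=0$. Here I use that $\pi_{\bu^*(t)}$ is self-adjoint on $\H$. Since $\V_0$ is arbitrary, $\U^*=-\pi_{\bu^*}\p$ a.e. in $(0,T)$. The right-hand side is continuous in $t$ into $\H$, because $\p\in\C([0,T];\H)$ and $\bu^*\in\C([0,T];\V)$. So $\U^*$ has a continuous representative, and \eqref{3.41} holds for all $t$. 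The orthogonality $(\U^*(t),\bu^*(t))=0$ then follows from $\|\bu^*(t)\|_{\H}=1$, exactly as in the computation preceding the theorem, which yields the tangent-bundle statement.
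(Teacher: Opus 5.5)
Your proposal is correct and follows the paper's route essentially step for step. You establish Fr\'echet differentiability of $\U\mapsto\bu_{\U}$ with derivative given by the linearized system \eqref{4a} (forcing $\pi_{\bu^*}\V_0$, $\boldsymbol{\eta}(0)=\boldsymbol{0}$), differentiate the reduced cost with the $\frac12\|\U\|_{\H}^2$ normalization that the paper also uses tacitly in \eqref{lam}, use duality with the adjoint \eqref{adjp} to obtain $\U^*=-\pi_{\bu^*}\p$, and then use continuity to get the identity for all $t$.

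The one point needing care is the remainder $\nu\|\nabla\widetilde{\bu}\|_{\H}^2(\bu^*,\boldsymbol{\xi})$, with $\widetilde{\bu}=\bu_{\U^*+\tau\V_0}-\bu^*$. With only your $\C([0,T];\H)\cap\mathrm{L}^2(0,T;\V)$ Lipschitz bound, you must keep this term linear in $\|\boldsymbol{\xi}\|_{\H}$, either by sup-absorption or by the nonlinear Gr\"onwall inequality used in Proposition \ref{lem3.8}. Applying Young's inequality to push it into $\|\nabla\widetilde{\bu}\|_{\H}^4$, together with uniform $\mathrm{L}^\infty(0,T;\V)$ bounds, only yields $O(\tau)$, not $o(\tau)$. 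Alternatively, use the stronger $\C([0,T];\V)$ estimate of Lemma \ref{lem3.7}.
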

	
	\begin{remark}
		Since $\U^*(t)\perp\bu^*(t)$  in $\H$ for all $t\in[0,T]$, under an optimal control $(\bu^*,\U^*)$ for problem \eqref{3.41}, the constrained PDE reduces to 
			\begin{equation}\label{eqn-constrained-NSE-W}
			\left\{
			\begin{aligned}
				\frac{\d\bu^*(t)}{\d t}&=-\nu\A\bu^*(t)+\nu\|\nabla\bu^*(t)\|_{\H}^2\bu^*(t)-\B(\bu^*(t),\bu^*(t))+\U^*(t),\ \text{ in }\ \H, \\
				\bu^*(0)&=\bu_0,
			\end{aligned}
			\right. 
		\end{equation} 
	and 	the adjoint system \eqref{adjp} reduces to 
		\begin{eqnarray}\label{adjp-new}
			\left\{
			\begin{aligned}
				-\frac{\d\p(t)}{\d t}&=-\nu\A\p(t)+\nu\|\nabla\bu^*(t)\|_{\H}^2\p(t)+2\nu(\bu^*(t),\p(t))\A\bu^*(t)-(\B'(\bu^*(t)))^*\p (t)\\&\quad-\left[\p(t)-(\p(t),\bu^*(t))\bu^*(t)\right](\bu^*(t),\p(t))+(\bu^*(t)-\bu_d(t))+\A\bu^*(t),\ \text{ in }\ \V', \\ \p(T)&=\bu^*(T)-\bu_f\in\H,
			\end{aligned}
			\right.
		\end{eqnarray}
		for	a.e. $t\in[0,T]$. 
	\end{remark}
Before proving Theorem \ref{main}, we present several auxiliary results that will be useful in establishing the first-order necessary conditions for optimality. 

	\begin{lemma}\label{lem3.7}
		Let $(\bu^*, \U^*) \in \mathscr{A}_{\mathrm{ad}}$ denote the optimal pair for the control problem \eqref{control problem} established in Theorem \ref{thm-exis-optimal}.
		For any sufficiently small $\tau > 0$ and any $\U \in \mathscr{U}_{\mathrm{ad}}$, let $\bu_{\U^* + \tau \U}$ represent the unique strong solution of system \eqref{eqn-constrained-NSE-N} corresponding to the control $\U^* + \tau \U$.
		Then the following result holds:
		\begin{align}\label{325a}
			&\sup_{t\in[0,T]}\|\bu_{\U^*+\tau\U}(t)-\bu_{\U^*}(t)\|_{\V}^2+\nu\int_0^T\|\A\left(\bu_{\U^*+\tau\U}(t)-\bu_{\U^*}(t)\right)\|_{\H}^2\d t\nonumber\\&\leq  \tau^2 K_{\nu,T}\left(\int_0^T\|\U(t)\|_{\H}^2\d t\right),
		\end{align}
		where 
		\begin{align}
			K_{\nu,T}&=C_{\nu,T}\bigg[1+\left(\nu\|\bu_0\|_{\V}^2+\int_0^T\|\U^*(t)\|_{\H}^2\d t+\int_0^T\|\U(t)\|_{\H}^2\d t\right)^2\bigg]\nonumber\\&\quad\times
			\exp\bigg\{
			C_{\nu,T}	\bigg[1+\left(\nu\|\nabla\bu_0\|_{\H}^2+\int_0^T\|\U^*(t)\|_{\H}^2\d t+\int_0^T\|\U(t)\|_{\H}^2\d t\right)^2\bigg]\bigg\}.
		\end{align}
	\end{lemma}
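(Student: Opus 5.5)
Set $\bu^\tau:=\bu_{\U^*+\tau\U}$, $\bu:=\bu_{\U^*}$ and $\z:=\bu^\tau-\bu$; note that $\z(0)=\0$. The plan is to derive the equation for $\z$, obtain a strong ($\V$-level) energy inequality by testing with $\A\z$, and close with Gr\"onwall, with all coefficients controlled by Theorem \ref{thm-exis-unique}. Subtracting the two state equations, exactly as in \eqref{eqn-constrained-NSE-D}, gives
\begin{align*}
\frac{\d\z}{\d t}&=-\nu\A\z+\nu\|\nabla\bu^\tau\|_{\H}^2\z+\nu\left(\|\nabla\bu^\tau\|_{\H}^2-\|\nabla\bu\|_{\H}^2\right)\bu-\B(\bu^\tau,\z)-\B(\z,\bu)\\&\quad+\tau\U-(\U^*,\bu^\tau)\z-(\U^*,\z)\bu-\tau(\U,\bu^\tau)\bu^\tau ,
\end{align*}
an identity in $\mathrm{L}^2(0,T;\H)$. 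We also use $\|\bu^\tau(t)\|_{\H}=\|\bu(t)\|_{\H}=1$. Applying Theorem \ref{thm-exis-unique} with the control $\U^*+\tau\U$, for $\tau\le1$, yields the uniform bound
\[
\sup_t\|\nabla\bu^\tau\|_{\H}^2+\int_0^T\|\A\bu^\tau\|_{\H}^2\le C_{\nu,T}\bigl[1+(\nu\|\nabla\bu_0\|_{\H}^2+\|\U^*\|^2_{\mathrm{L}^2(0,T;\H)}+\|\U\|^2_{\mathrm{L}^2(0,T;\H)})^2\bigr]=:M,
\]
and the same bound holds for $\bu$.

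Next, take the $\H$-inner product of the $\z$-equation with $\A\z$; this is legitimate since $\z\in\mathrm{L}^2(0,T;\mathrm{D}(\A))$ and $\frac{\d\z}{\d t}\in\mathrm{L}^2(0,T;\H)$, with the Lions--Magenes lemma giving $\frac12\frac{\d}{\d t}\|\nabla\z\|_{\H}^2=(\z',\A\z)$. The left side becomes $\frac12\frac{\d}{\d t}\|\nabla\z\|^2_{\H}+\nu\|\A\z\|^2_{\H}$. The right side is handled term by term, each time absorbing $\frac{\nu}{8}\|\A\z\|_{\H}^2$ via Young's inequality:
\begin{itemize}
\item[(a)] $\nu\|\nabla\bu^\tau\|^2(\z,\A\z)=\nu\|\nabla\bu^\tau\|^2\|\nabla\z\|^2\le\nu M\|\nabla\z\|^2$.
\item[(b)] $\nu(\|\nabla\bu^\tau\|+\|\nabla\bu\|)\|\nabla\z\|\,|(\bu,\A\z)|\le\nu(\|\nabla\bu^\tau\|+\|\nabla\bu\|)\|\nabla\z\|\|\nabla\bu\|$, which is bounded by $CM\|\nabla\z\|^2$.
\item[(c)] The bilinear terms: $|(\B(\bu^\tau,\z),\A\z)|\le\|\bu^\tau\|_{\mathbb{L}^\infty}\|\nabla\z\|\|\A\z\|$, and by Agmon $\|\bu^\tau\|_{\mathbb{L}^\infty}^2\le C\|\A\bu^\tau\|$ (using $\|\bu^\tau\|_{\H}=1$); similarly $|(\B(\z,\bu),\A\z)|\le\|\z\|_{\mathbb{L}^4}\|\nabla\bu\|_{\mathbb{L}^4}\|\A\z\|\le C\|\nabla\z\|\|\nabla\bu\|^{1/2}\|\A\bu\|^{1/2}\|\A\z\|$. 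After Young these produce $C(1+\|\A\bu^\tau\|+\|\A\bu\|)\|\nabla\z\|^2$, i.e. a coefficient that is in $\mathrm{L}^1(0,T)$.
\item[(d)] The control terms: $|(\U^*,\bu^\tau)(\z,\A\z)|\le\|\U^*\|\|\nabla\z\|^2$ and $|(\U^*,\z)(\bu,\A\z)|\le\lambda_1^{-1/2}\|\U^*\|\|\nabla\z\|\|\nabla\bu\|\le C\sqrt M\|\U^*\|\|\nabla\z\|^2$.
\item[(e)] The two source terms $\tau(\U,\A\z)$ and $\tau(\U,\bu^\tau)(\bu^\tau,\A\z)$ are bounded by $2\tau\|\U\|\|\A\z\|\le\frac{\nu}{8}\|\A\z\|^2+\frac{8\tau^2}{\nu}\|\U\|^2$.
\end{itemize}

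Collecting these estimates gives
\[
\frac{\d}{\d t}\|\nabla\z\|^2+\nu\|\A\z\|^2\le g(t)\|\nabla\z\|^2+C\tau^2\|\U\|^2,\qquad \int_0^Tg\le C_{\nu,T}(1+M+\|\U^*\|_{\mathrm{L}^2(0,T;\H)}).
\]
Gr\"onwall's inequality with $\z(0)=\0$ then yields $\sup_t\|\z\|_{\V}^2+\nu\int_0^T\|\A\z\|^2\le C\tau^2\int_0^T\|\U\|^2\,e^{C(1+M)}$, which is \eqref{325a} with a constant of the stated form $K_{\nu,T}$. The polynomial prefactor in $K_{\nu,T}$ merely absorbs $M$; the factor $\sqrt M$ and the term $\int_0^T\|\U^*\|$ can be bounded by $1+M$.

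The main obstacle is the bilinear term $(\B(\bu^\tau,\z),\A\z)$. The cancellation $(\B(\z,\z),\A\z)=0$ is not directly available, and the estimate requires the $\mathrm{L}^2(0,T;\mathrm{D}(\A))$ bound on $\bu^\tau$ uniform in $\tau$, which is supplied by \eqref{eqn-main-energy}. A secondary point is keeping track of the $\tau$-dependence: one must write $(\U^*+\tau\U,\bu^\tau)\bu^\tau-(\U^*,\bu)\bu$ so that only terms linear in $\z$ or carrying an explicit factor $\tau$ appear. This is exactly what the decomposition displayed in the $\z$-equation above does.
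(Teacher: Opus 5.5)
Your proof is correct, and it reaches \eqref{325a} by a somewhat different route from the paper.

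\textbf{Comparison of routes.} The paper works in two stages. It first tests the difference equation with $\widetilde{\bu}$ to obtain the $\H$-level bound \eqref{eqn-difference-8}. Only then does it test with $\A\widetilde{\bu}$, estimating $(\B(\widetilde{\bu},\bu_{\U^*}),\A\widetilde{\bu})$ by Agmon's inequality applied to $\widetilde{\bu}$. This produces a source term $\frac{C}{\nu}\|\nabla\bu_{\U^*}\|_{\H}^4\|\widetilde{\bu}\|_{\H}^2$, which is controlled by the first stage.

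You instead place $\z$ and $\nabla\bu$ in $\mathbb{L}^4$ (Ladyzhenskaya plus Poincar\'e). Then every right-hand term is either absorbed into $\nu\|\A\z\|_{\H}^2$, has the form $g(t)\|\nabla\z\|_{\H}^2$ with $g\in\mathrm{L}^1(0,T)$, or is a pure $\tau^2\|\U\|_{\H}^2$ source. A single Gr\"onwall step therefore suffices. The $\H$-level bound that the paper reuses in Proposition \ref{lem3.8} follows from yours by Poincar\'e. What your route requires is the $\mathrm{L}^2(0,T;\mathrm{D}(\A))$ bound on both $\bu^\tau$ and $\bu$, uniformly in $\tau\le 1$, and \eqref{eqn-main-energy} supplies this.

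\textbf{Treatment of $(\B(\bu^\tau,\z),\A\z)$.} Your route also gains something substantive here. You estimate this term via Agmon, whereas the paper discards it by asserting $(\B(\bu_{\U^*+\tau\U},\widetilde{\bu}),\A\widetilde{\bu})=0$. On $\mathbb{T}^2$ only the diagonal identity $(\B(\v,\v),\A\v)=0$ holds. For example, if $\v=\v_1+\v_2$ with $\A\v_i=\lambda_i\v_i$, then $(\B(\bu,\v),\A\v)=(\lambda_2-\lambda_1)\,b(\bu,\v_1,\v_2)$, which is nonzero in general. Your treatment of this term is what actually makes the $\V$-level estimate rigorous.

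\textbf{Two small points.}
\begin{itemize}
\item In (b) you dropped a factor: $|(\bu,\A\z)|=|(\nabla\bu,\nabla\z)|\le\|\nabla\bu\|_{\H}\|\nabla\z\|_{\H}$. Your final bound $CM\|\nabla\z\|_{\H}^2$ is nevertheless correct.
\item The identity $(\B(\z,\z),\A\z)=0$ \emph{is} available on the torus; it simply does not help. Splitting $\B(\bu^\tau,\bu^\tau)-\B(\bu,\bu)=\B(\z,\z)+\B(\bu,\z)+\B(\z,\bu)$ still leaves the non-cancelling term $(\B(\bu,\z),\A\z)$, which must be estimated exactly as you did.
\end{itemize}
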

	\begin{proof}
		Let $(\bu^*, \U^*) \in \mathscr{A}_{\mathrm{ad}}$ be the optimal pair associated with the control problem \eqref{control problem}, as established in Theorem \ref{thm-exis-optimal}.
		For every $\U \in \mathscr{U}_{\mathrm{ad}}$ and sufficiently small $\tau > 0$, denote by $\bu_{\U^* + \tau \U}$, the unique strong solution of system \eqref{eqn-constrained-NSE-N} corresponding to the perturbed control $\U^* + \tau \U$. Under these assumptions, the following estimates are obtained:
		\begin{align}\label{eqn-difference-9}
			&	\nu\|\bu_{\U^*}(t)\|_{\V}^2+\int_0^T\left\|\frac{\d\bu_{\U^*}(t)}{\d t}\right\|_{\H}^2\d t+\nu^2\int_0^T\|\A\bu_{\U^*}(t)\|_{\H}^2\d t\nonumber\\&\leq C_{\nu,T}\left[1+\left(\nu\|\nabla\bu_0\|_{\H}^2+\int_0^T\|\U^*(t)\|_{\H}^2\d t\right)^2\right]
		\end{align}
		and 
		\begin{align}\label{eqn-difference-11}
			&	\nu\|\bu_{\U^*+\tau \U}(t)\|_{\V}^2+\int_0^T\left\|\frac{\d\bu_{\U^*+\tau \U}(t)}{\d t}\right\|_{\H}^2\d t+\nu^2\int_0^T\|\A\bu_{\U^*+\tau \U}(t)\|_{\H}^2\d t\nonumber\\&\leq C_{\nu,T}\left[1+\left(\nu\|\nabla\bu_0\|_{\H}^2+\int_0^T\|\U^*(t)\|_{\H}^2\d t+\int_0^T\|\U(t)\|_{\H}^2\d t\right)^2\right],
		\end{align}
		for any $\tau\in(0,1]$. 
		We know that $\widetilde{\bu}=\bu_{\U^*+\tau \U}-\bu_{\U^*}$  satisfies the following system for $t\in(0,T)$:
		\begin{equation}\label{11}
			\left\{
			\begin{aligned}
				\frac{\d \widetilde{\boldsymbol{u}}(t)}{\d t}&=-\nu\A \widetilde{\boldsymbol{u}}(t)+\nu\|\nabla\bu_{\U^*}(t)\|_{\H}^2\widetilde{\boldsymbol{u}}(t)+\nu\left(\|\nabla\bu_{\U^*+\tau \U}(t)\|_{\H}^2-\|\nabla\bu_{\U^*}(t)\|_{\H}^2\right)\bu_{\U^*+\tau\U}(t)\\&\quad-[\B(\bu_{\U^*+\tau \U}(t))-\B(\bu_{\U^*}(t))]+\tau\U(t)\\&\quad-\left[(\U^*(t)+\tau \U(t),\bu_{\U^*+\tau \U}(t))\bu_{\U^*+\tau \U}(t)-(\U^*(t),\bu_{\U^*}(t))\bu_{\U^*}(t)\right],\ \text{ in }\ \V^{\prime},\\
				\widetilde{\bu}(0)&=\boldsymbol{0}.
			\end{aligned}
			\right.
		\end{equation}
		Taking the inner product with $\widetilde{\bu}(\cdot)$ to the first equation in \eqref{11}, we obtain for a.e. $t\in[0,T]$ 
		\begin{align}\label{eqn-difference}
			&	\frac{1}{2}\frac{\d }{\d t}\|\widetilde{\bu}(t)\|_{\H}^2+\nu\|\nabla\widetilde{\bu}(t)\|_{\H}^2\nonumber\\&=\nu\|\nabla\bu_{\U^*}(t)\|_{\H}^2\|\widetilde{\boldsymbol{u}}(t)\|_{\H}^2+\nu\left(\|\nabla\bu_{\U^*+\tau \U}(t)\|_{\H}^2-\|\nabla\bu_{\U^*}(t)\|_{\H}^2\right)(\bu_{\U^*+\tau \U}(t),\widetilde{\bu}(t))\nonumber\\&\quad-\langle\B(\bu_{\U^*+\tau \U}(t))-\B(\bu_{\U^*}(t)),\widetilde{\bu}(t)\rangle+\tau(\U(t),\widetilde{\bu}(t))\nonumber\\&\quad-\left(\left[(\U^*(t)+\tau \U(t),\bu_{\U^*+\tau \U}(t))\bu_{\U^*+\tau \U}(t)-(\U^*(t),\bu_{\U^*}(t))\bu_{\U^*}(t)\right],\widetilde{\bu}(t)\right)\nonumber\\&=\nu\|\nabla\bu_{\U^*+\tau \U}(t)\|_{\H}^2\|\widetilde{\boldsymbol{u}}(t)\|_{\H}^2+\nu\left(\|\nabla\bu_{\U^*}(t)\|_{\H}^2-\|\nabla\bu_{\U^*}(t)\|_{\H}^2\right)(\bu_{\U^*+\tau \U}(t),\widetilde{\bu}(t))\nonumber\\&\quad-(\B(\widetilde{\bu}(t),\bu_{\U^*}(t)),\widetilde{\bu}(t))+\tau(\U(t),\widetilde{\bu}(t))-\tau(\U(t),\bu_{\U^*+\tau \U}(t))(\bu_{\U^*+\tau \U}(t),\widetilde{\bu}(t))\nonumber\\&\quad-(\U^*(t),\widetilde{\bu}(t))(\bu_{\U^*+\tau \U}(t),\widetilde{\bu}(t))-(\U^*(t),\bu_{\U^*}(t))\|\widetilde{\bu}(t)\|_{\H}^2.
		\end{align}
		Let us estimate $\nu\left(\|\nabla\bu_{\U^*+\tau \U}\|_{\H}^2-\|\nabla\bu_{\U^*+\tau \U}\|_{\H}^2\right)(\bu_{\U^*},\widetilde{\bu})$  	using  the fact that $\bu_{\U^*+\tau \U}\in\mathcal{M}$ as 
		\begin{align}\label{eqn-difference-1}
			&\nu	\left(\|\nabla\bu_{\U^*+\tau \U}\|_{\H}^2-\|\nabla\bu_{\U^*}\|_{\H}^2\right)(\bu_{\U^*+\tau \U},\widetilde{\bu})\nonumber\\&\leq \nu\left(\|\nabla\bu_{\U^*+\tau \U}\|_{\H}+\|\nabla\bu_{\U^*}\|_{\H}\right)\|\nabla(\bu_{\U^*+\tau \U}-\bu_{\U^*})\|_{\H}\|\bu_{\U^*+\tau \U}\|_{\H}\|\widetilde{\bu}\|_{\H}\nonumber\\&\leq \frac{\nu}{4}\|\nabla\widetilde{\bu}\|_{\H}^2+\nu\left(\|\nabla\bu_{\U^*+\tau \U}\|_{\H}+\|\nabla\bu_{\U^*}\|_{\H}\right)^2\|\widetilde{\bu}\|_{\H}^2.
		\end{align}
	We estimate $-(\B(\widetilde{\bu},\bu_{\U^*}),\widetilde{\bu})$ using H\"older's, Ladyzhenskaya's and Young's inequalities as
		\begin{align}
			-(\B(\widetilde{\bu},\bu_{\U^*}),\widetilde{\bu})&\leq\|\nabla\bu_{\U^*}\|_{\H}\|\widetilde{\bu}\|_{\mathbb{L}^4}^2\leq 2^{1/2}\|\nabla\bu_{\U^*}\|_{\H}\|\widetilde{\bu}\|_{\H}\|\nabla\widetilde{\bu}\|_{\H}\nonumber\\&\leq\frac{\nu}{4}\|\nabla\widetilde{\bu}\|_{\H}^2+\frac{2}{\nu}\|\nabla\bu_{\U^*}\|_{\H}^2\|\widetilde{\bu}\|_{\H}^2. 
		\end{align}
		A simple application of the Cauchy-Schwarz and Young inequalities yield
		\begin{align}
			\tau(\U,\widetilde{\bu})\leq\tau\|\U\|_{\H}\|\widetilde{\bu}\|_{\H}\leq \frac{1}{2}\|\widetilde{\bu}\|_{\H}^2+\frac{\tau^2}{2}\|\U\|_{\H}^2.
		\end{align}
		Since $\bu_{\U^*+\tau \U}\in\mathcal{M}$, the expression $-\tau(\U,\bu_{\U^*+\tau \U})(\bu_{\U^*+\tau \U},\widetilde{\bu})$ can be bounded analogously to the previous case as
		\begin{align}
			-\tau(\U,\bu_{\U^*+\tau \U})(\bu_{\U^*+\tau \U},\widetilde{\bu})&\leq\tau\|\U\|_{\H}\|\widetilde{\bu}\|_{\H}\leq \frac{1}{2}\|\widetilde{\bu}\|_{\H}^2+ \frac{\tau^2}{2}\|\U\|_{\H}^2.
		\end{align}
		Once again using the fact that $\bu_{\U^*+\tau \U},\bu_{\U^*}\in\mathcal{M}$, we estimate the term $-(\U^*,\widetilde{\bu})(\bu_{\U^*+\tau \U},\widetilde{\bu})-(\U^*,\bu_{\U^*})\|\widetilde{\bu}\|_{\H}^2$ as 
		\begin{align}\label{eqn-difference-5}
			&-(\U^*,\widetilde{\bu})(\bu_{\U^*+\tau \U},\widetilde{\bu})-(\U^*,\bu_{\U^*})\|\widetilde{\bu}\|_{\H}^2\leq 2\|\U^*\|_{\H}\|\widetilde{\bu}\|_{\H}^2. 
		\end{align}
		Combining \eqref{eqn-difference-1}-\eqref{eqn-difference-5} and substituting it in \eqref{eqn-difference}, we deduce  for all $t\in[0,T]$ that 
	{\small{	\begin{align}
			&	\|\widetilde{\bu}(t)\|_{\H}^2+\nu\int_0^t\|\nabla\widetilde{\bu}(s)\|_{\H}^2\d s
			\nonumber\\&=\|\widetilde{\bu}(0)\|_{\H}^2+2\int_0^t\left[\left(2\nu\|\nabla\bu_{\U^*+\tau \U}(s)\|_{\H}^2+\left(3\nu+\frac{2}{\nu}\right)\|\nabla\bu_{\U^*}(s)\|_{\H}^2\right)+1+2\|\U^*(s)\|_{\H}\right]\|\widetilde{\bu}(s)\|_{\H}^2\d s\nonumber\\&\quad+\tau^2\int_0^t\|\U(s)\|_{\H}^2\d s+\tau^2\int_0^t\|\U^*(s)\|_{\H}^2\d s. 
		\end{align}}}
		Using the fact that $\widetilde{\bu}(0)=\boldsymbol{0}$ and applying Gr\"onwall's inequality, we conclude 
		\begin{align}\label{eqn-difference-8}
			&	\|\widetilde{\bu}(t)\|_{\H}^2+\nu\int_0^t\|\nabla\widetilde{\bu}(s)\|_{\H}^2\d s\nonumber\\&\leq  \left(\tau^2\int_0^T\|\U(t)\|_{\H}^2\d t+\tau^2\int_0^T\|\U^*(t)\|_{\H}^2\d t \right)\exp\bigg\{2\int_0^T\bigg[\left(2\nu\|\nabla\bu_{\U^*+\tau \U}(t)\|_{\H}^2\right.\nonumber\\&\left.\qquad+\left(3\nu+\frac{2}{\nu}\right)\|\nabla\bu_{\U^*}(t)\|_{\H}^2\right)+1+2\|\U^*(t)\|_{\H}\bigg]\d t\bigg\}\nonumber\\&\leq \left(2\tau^2\int_0^T\|\U(t)\|_{\H}^2\d t\right)\exp\bigg\{
			C_{\nu,T}	\bigg[1+\left(\nu\|\nabla\bu_0\|_{\H}^2+\int_0^T\|\U^*(t)\|_{\H}^2\d t+\int_0^T\|\U(t)\|_{\H}^2\d t\right)^2\bigg]\bigg\},
		\end{align}
		for all $t\in[0,T]$.

		Taking the inner product with $\A\widetilde{\bu}(\cdot)$ to the first equation in \eqref{11}, we infer for a.e. $t\in[0,T]$
		\begin{align}\label{eqn-difference-6}
			&	\frac{1}{2}\frac{\d }{\d t}\|\nabla\widetilde{\bu}(t)\|_{\H}^2+\nu\|\A\widetilde{\bu}(t)\|_{\H}^2\nonumber\\&=\nu\|\nabla\bu_{\U^*}(t)\|_{\H}^2\|\nabla\widetilde{\boldsymbol{u}}(t)\|_{\H}^2+\nu\left(\|\nabla\bu_{\U^*+\tau \U}(t)\|_{\H}^2-\|\nabla\bu_{\U^*}(t)\|_{\H}^2\right)(\nabla\bu_{\U^*+\tau \U}(t),\nabla\widetilde{\bu}(t))\nonumber\\&\quad-\langle\B(\bu_{\U^*+\tau \U}(t))-\B(\bu_{\U^*}(t)),\A\widetilde{\bu}(t)\rangle+\tau(\U(t),\A\widetilde{\bu}(t))\nonumber\\&\quad-\left(\left[(\U^*(t)+\tau \U(t),\bu_{\U^*+\tau \U}(t))\bu_{\U^*+\tau \U}(t)-(\U^*(t),\bu_{\U^*}(t))\bu_{\U^*}(t)\right],\A\widetilde{\bu}(t)\right)\nonumber\\&=\nu\|\nabla\bu_{\U^*}(t)\|_{\H}^2\|\nabla\widetilde{\boldsymbol{u}}(t)\|_{\H}^2+\nu\left(\|\nabla\bu_{\U^*+\tau \U}(t)\|_{\H}^2-\|\nabla\bu_{\U^*}(t)\|_{\H}^2\right)(\nabla\bu_{\U^*+\tau \U}(t),\nabla\widetilde{\bu}(t))\nonumber\\&\quad-(\B(\widetilde{\bu}(t),\bu_{\U^*}(t)),\A\widetilde{\bu}(t))+\tau(\U(t),\A\widetilde{\bu}(t))-\tau(\U(t),\bu_{\U^*+\tau \U}(t))(\bu_{\U^*+\tau \U}(t),\A\widetilde{\bu}(t))\nonumber\\&\quad-(\U^*(t),\widetilde{\bu}(t))(\bu_{\U^*+\tau \U}(t),\A\widetilde{\bu}(t))-(\U^*(t),\A\bu_{\U^*}(t))\|\nabla\widetilde{\bu}(t)\|_{\H}^2,
		\end{align}
	where we have used the fact that $(\B(\bu_{\U^*+\tau\U},\widetilde{\bu}),\A\widetilde{\bu})=0$.  
		Let us now estimate the term $\nu\left(\|\nabla\bu_{\U^*+\tau \U}\|_{\H}^2-\|\nabla\bu_{\U^*}\|_{\H}^2\right)(\nabla\bu_{\U^*+\tau \U},\nabla\widetilde{\bu})$  using H\"older's and Young's inequalities as 
		\begin{align}\label{eqn-difference-7}
			&	\nu\left(\|\nabla\bu_{\U^*+\tau \U}\|_{\H}^2-\|\nabla\bu_{\U^*}\|_{\H}^2\right)(\nabla\bu_{\U^*+\tau \U},\nabla\widetilde{\bu})\nonumber\\&\leq\nu\left(\|\nabla\bu_{\U^*+\tau \U}\|_{\H}+\|\nabla\bu_{\U^*}\|_{\H}\right)\|\nabla(\bu_{\U^*+\tau \U}-\bu_{\U^*})\|_{\H}\|\nabla\bu_{\U^*+\tau \U}\|_{\H}\|\nabla\widetilde{\bu}\|_{\H}\nonumber\\&=\nu\left(\|\nabla\bu_{\U^*+\tau \U}\|_{\H}+\|\nabla\bu_{\U^*}\|_{\H}\right)\|\nabla\bu_{\U^*+\tau \U}\|_{\H}\|\nabla\widetilde{\bu}\|_{\H}^2. 
		\end{align}
		We consider $-(\B(\widetilde{\bu},\bu_{\U^*}),\A\widetilde{\bu})$  and estimate it using Agmon's, Gagliardo-Nirenberg's and  Young's inequalities as 
		\begin{align}
			-(\B(\widetilde{\bu},\bu_{\U^*}),\A\widetilde{\bu})&\leq\|\widetilde{\bu}\|_{\mathbb{L}^{\infty}}\|\nabla\bu_{\U^*}\|_{\H}\|\A\widetilde{\bu}\|_{\H}\leq C\|\widetilde{\bu}\|_{\H}^{1/2}\|\nabla\bu_{\U^*}\|_{\H}\|\A\widetilde{\bu}\|_{\H}^{3/2}\nonumber\\&\leq\frac{\nu}{8}\|\A\widetilde{\bu}\|_{\H}^{2}+\frac{C}{\nu}\|\nabla\bu_{\U^*}\|_{\H}^4\|\widetilde{\bu}\|_{\H}^2. 
		\end{align}
		We estimate $\tau(\U,\A\widetilde{\bu})$ using the Cauchy-Schwarz  and Young inequalities as 
		\begin{align}
			\tau(\U,\A\widetilde{\bu})\leq\tau\|\U\|_{\H}\|\A\widetilde{\bu}\|_{\H}\leq\frac{\nu}{8}\|\A\widetilde{\bu}\|_{\H}^2+\frac{2\tau^2}{\nu}\|\U\|_{\H}^2. 
		\end{align}
		We use a similar calculation to estimate $-\tau(\U,\bu_{\U^*+\tau \U})(\bu_{\U^*+\tau \U},\A\widetilde{\bu})$ as 
		\begin{align}
			-\tau(\U,\bu_{\U^*+\tau \U})(\bu_{\U^*+\tau \U},\A\widetilde{\bu})\leq\tau\|\U\|_{\H}\|\A\widetilde{\bu}\|_{\H}\leq\frac{\nu}{8}\|\A\widetilde{\bu}\|_{\H}^2+\frac{2\tau^2}{\nu}\|\U\|_{\H}^2,
		\end{align}
		where we have used the fact that $\bu_{\U^*+\tau \U}(t)\in\mathcal{M}$ for all $t\in[0,T]$. Using this fact, and  applying H\"older's and Young's inequalities, we estimate $-(\U^*,\widetilde{\bu})(\bu_{\U^*+\tau \U},\A\widetilde{\bu})-(\U^*,\A\bu_{\U^*})\|\nabla\widetilde{\bu}\|_{\H}^2$  as
		\begin{align}\label{eqn-difference-10}
			-&(\U^*,\widetilde{\bu})(\bu_{\U^*+\tau \U},\A\widetilde{\bu})-(\U^*,\A\bu_{\U^*})\|\nabla\widetilde{\bu}\|_{\H}^2\nonumber\\&\leq\|\U^*\|_{\H}\|\widetilde{\bu}\|_{\H}\|\A\widetilde{\bu}\|_{\H}+\|\U^*\|_{\H}\|\A\bu_{\U^*}\|_{\H}\|\nabla\widetilde{\bu}\|_{\H}^2\nonumber\\&\leq\frac{\nu}{8}\|\A\widetilde{\bu}\|_{\H}^2+\frac{2}{\nu}\|\U^*\|_{\H}^2\|\widetilde{\bu}\|_{\H}^2+\left(\frac{1}{2}\|\U^*\|_{\H}^2+\frac{1}{2}\|\A\bu_{\U^*}\|_{\H}^2\right)\|\nabla\widetilde{\bu}\|_{\H}^2. 
		\end{align}
		Combining \eqref{eqn-difference-7}-\eqref{eqn-difference-10} and substituting it in \eqref{eqn-difference-6}, we deduce  for all $t\in[0,T]$ 
		\begin{align}
			&	\|\nabla\widetilde{\bu}(t)\|_{\H}^2+\nu\int_0^t\|\A\widetilde{\bu}(s)\|_{\H}^2\d s\nonumber\\&\leq \|\nabla\widetilde{\bu}(0)\|_{\H}^2+2\int_0^t\left[\nu\left(\|\nabla\bu_{\U^*+\tau \U}(s)\|_{\H}+\|\nabla\bu_{\U^*}(s)\|_{\H}\right)^2\right.\nonumber\\&\qquad\left.+\left(\frac{1}{2}\|\U^*(s)\|_{\H}^2+\frac{1}{2}\|\A\bu_{\U^*}(s)\|_{\H}^2\right)\right]\|\nabla\widetilde{\bu}(s)\|_{\H}^2 \d s\nonumber\\&\quad+2\int_0^t\left[\frac{C}{\nu}\|\nabla\bu_{\U^*}(s)\|_{\H}^4+\frac{2}{\nu}\|\U^*(s)\|_{\H}^2\right]\|\widetilde{\bu}(s)\|_{\H}^2\d s+\frac{8\tau^2}{\nu}\int_0^t\|\U(s)\|_{\H}^2\d s.
		\end{align}
		Since $\widetilde{\bu}(0)=\boldsymbol{0}$, applying  Gr\"onwall's inequality and using  \eqref{eqn-difference-9}, \eqref{eqn-difference-11} and \eqref{eqn-difference-8}, we deduce 
		\begin{align}
			&	\|\nabla\widetilde{\bu}(t)\|_{\H}^2+\nu\int_0^t\|\A\widetilde{\bu}(s)\|_{\H}^2\d s\nonumber\\&\leq\bigg\{2\sup_{t\in[0,T]}\|\widetilde{\bu}(t)\|_{\H}^2\int_0^T\left[\frac{C}{\nu}\|\nabla\bu_{\U^*}(t)\|_{\H}^4+\frac{2}{\nu}\|\U^*(t)\|_{\H}^2\right]\d t+\frac{8\tau^2}{\nu}\int_0^T\|\U(t)\|_{\H}^2\d t\bigg\} \nonumber\\&\quad\times\exp\bigg\{2\int_0^T\left[\nu\left(\|\nabla\bu_{\U^*+\tau \U}(t)\|_{\H}+\|\nabla\bu_{\U^*}(t)\|_{\H}\right)^2+\left(\frac{1}{2}\|\U^*(t)\|_{\H}^2+\frac{1}{2}\|\A\bu_{\U^*}(t)\|_{\H}^2\right)\right]\d t\bigg\}\nonumber\\&\leq \tau^2 C_{\nu,T}\left(\int_0^T\|\U(t)\|_{\H}^2\d t\right)\bigg[1+\left(\nu\|\nabla\bu_0\|_{\H}^2+\int_0^T\|\U^*(t)\|_{\H}^2\d t+\int_0^T\|\U(t)\|_{\H}^2\d t\right)^2\bigg]\nonumber\\&\quad\times
			\exp\bigg\{
			C_{\nu,T}	\bigg[1+\left(\nu\|\nabla\bu_0\|_{\H}^2+\int_0^T\|\U^*(t)\|_{\H}^2\d t+\int_0^T\|\U(t)\|_{\H}^2\d t\right)^2\bigg]\bigg\},
		\end{align}
		for all $t\in[0,T]$. 
	\end{proof}

The next lemma establishes the Fr\'echet differentiability of the mapping
$\U \mapsto \bu_{\U},$
which maps the admissible control set $\mathscr{U}_{\mathrm{ad}}$ into the state space $\mathrm{C}([0,T]; \H) \cap \mathrm{L}^2(0,T; \V)$.

	\begin{proposition}\label{lem3.8}
		Let $\bu_0\in\V\cap\mathcal{M}$  be given. Then the mapping $\U^* \mapsto \bu_{\U^*}$  from $\mathscr{U}_{\mathrm{ad}}$ into  $\mathrm{C}([0,T];\H)\cap\mathrm{L}^{2}(0,T;\V)$ is Fr\'echet differentiable. Moreover, we have
		\begin{equation}\label{lim}
			\left\{
			\begin{aligned}
				\lim_{\tau\downarrow 0}\frac{\|\bu_{\U^*+\tau \U} (t)- \bu_{\U^*}(t)-\tau\boldsymbol{\eta}(t)\|_{\H}}{\tau}&=0, \ \text{ for all }\ t\in[0,T],\\
				\lim_{\tau\downarrow 0}\frac{\|\bu_{\U^*+\tau \U} - \bu_{\U^*}-\tau\boldsymbol{\eta}\|_{\mathrm{L}^{2}(0,T;\V)}}{\tau}&=0,
			\end{aligned}
			\right.
		\end{equation}
		where $\boldsymbol{\eta}(\cdot)$ is the \emph{unique weak solution} of the linearized system:
		\begin{equation}\label{4a}
			\left\{
			\begin{aligned}
				\frac{\d\boldsymbol{\eta}(t)}{\d t}&=-\nu\A\boldsymbol{\eta}(t)+\nu\|\nabla\bu_{\U^*}(t)\|_{\H}^2\boldsymbol{\eta}(t)+2\nu(\nabla\bu_{\U^*}(t),\nabla\boldsymbol{\eta}(t))\bu_{\U^*}(t)\\&\quad-\B'(\bu_{\U^*}(t))\boldsymbol{\eta}(t)+\U(t)-(\U(t),\bu_{\U^*}(t))\bu_{\U^*}(t)\\&\quad-[(\U^*(t),\bu_{\U^*}(t))\boldsymbol{\eta}(t)+(\U^*(t),\boldsymbol{\eta}(t))\bu_{\U^*}(t)],\ \text{ in }\ \V',\\
				\boldsymbol{\eta}(0)&=\boldsymbol{0},
			\end{aligned}
			\right.
		\end{equation}
		for 	a.e. $t\in[0,T]$,  and $\bu_{\U^*}(\cdot)$  and $\bu_{\U^*+\tau\U}(\cdot)$ are the unique strong  solutions of the controlled system \eqref{eqn-constrained-NSE-N} with  the controls $\U^*$ and $\U^*+\tau\U$, respectively.

		Moreover, the following tangency property of linearized state  holds:  $$\boldsymbol{\eta}(t)\in T_{\bu_{\U^*}(t)}\mathcal{M}:=\left\{\w(t)\in\H:(\bu_{\U^*}(t),\w(t))=0\right\},$$ for all $t\in[0,T]$, that is,
	$
			(\boldsymbol{\eta}(t),\bu_{\U^*}(t))=0\ \text{ for all }\ t\in[0,T],
	$
		 so that $\boldsymbol{\eta}$ belongs to the tangent cone of feasible perturbations.
	\end{proposition}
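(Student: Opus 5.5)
We will set $\boldsymbol{w}^\tau:=\bu_{\U^*+\tau\U}-\bu_{\U^*}-\tau\boldsymbol{\eta}$, where $\boldsymbol{\eta}$ is the weak solution of \eqref{4a} from Theorem \ref{linear} with $\W=\U-(\U,\bu_{\U^*})\bu_{\U^*}$. The goal is $\sup_t\|\boldsymbol{w}^\tau(t)\|_{\H}^2+\nu\int_0^T\|\nabla\boldsymbol{w}^\tau\|_{\H}^2\d t=o(\tau^2)$, and in fact we will aim for $O(\tau^4)$. Subtracting the equations for $\bu_{\U^*+\tau\U}$, $\bu_{\U^*}$ and $\tau\times$\eqref{4a}, with $\widetilde{\bu}=\bu_{\U^*+\tau\U}-\bu_{\U^*}$, gives a linear equation for $\boldsymbol{w}^\tau$ with the same operator as in \eqref{4a}, driven by quadratic remainders. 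These remainders are:
\begin{align*}
R_1&=\nu\|\nabla\widetilde{\bu}\|_{\H}^2\bu_{\U^*}+2\nu(\nabla\bu_{\U^*},\nabla\widetilde{\bu})\widetilde{\bu}+\nu\|\nabla\widetilde{\bu}\|_{\H}^2\widetilde{\bu},\qquad R_2=-\B(\widetilde{\bu},\widetilde{\bu}),\\
R_3&=-(\U^*,\widetilde{\bu})\widetilde{\bu}-\tau(\U,\widetilde{\bu})\bu_{\U^*+\tau\U}-\tau(\U,\bu_{\U^*})\widetilde{\bu}.
\end{align*}
These come from expanding $\|\nabla(\bu+\widetilde{\bu})\|^2(\bu+\widetilde{\bu})$, $\B(\bu+\widetilde{\bu})$ and $(\U^*+\tau\U,\bu+\widetilde{\bu})(\bu+\widetilde{\bu})$ to second order. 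The initial condition is $\boldsymbol{w}^\tau(0)=\boldsymbol{0}$.

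We then test this equation with $\boldsymbol{w}^\tau$, justified by Lions--Magenes since $\boldsymbol{w}^\tau\in\mathrm{W}^{1,2}(0,T;\V,\V')$. The linear part is handled exactly as in the a priori estimate preceding Theorem \ref{linear}: using $\bu_{\U^*}\in\mathcal{M}$, Ladyzhenskaya and Young, it is bounded by $\frac{\nu}{2}\|\nabla\boldsymbol{w}^\tau\|^2+C(\|\nabla\bu_{\U^*}\|^2+\|\U^*\|+1)\|\boldsymbol{w}^\tau\|^2$. The remainders are bounded as follows, with $\|\bu\|_{\H}=1$ and Lemma \ref{lem3.7} giving $\sup_t\|\widetilde{\bu}\|_{\V}^2\le C\tau^2$:
\begin{align*}
|(R_1,\boldsymbol{w}^\tau)|&\le C\big(\|\nabla\widetilde{\bu}\|^2+\|\nabla\widetilde{\bu}\|^3\big)\|\boldsymbol{w}^\tau\|+C\|\nabla\bu_{\U^*}\|\|\nabla\widetilde{\bu}\|\|\widetilde{\bu}\|\|\boldsymbol{w}^\tau\|,\\
|\langle\B(\widetilde{\bu},\widetilde{\bu}),\boldsymbol{w}^\tau\rangle|&\le\|\widetilde{\bu}\|_{\mathbb{L}^4}^2\|\nabla\boldsymbol{w}^\tau\|\le C\|\widetilde{\bu}\|_{\V}^2\|\nabla\boldsymbol{w}^\tau\|,\\
|(R_3,\boldsymbol{w}^\tau)|&\le C(\|\U^*\|+\tau\|\U\|)\|\widetilde{\bu}\|^2\|\boldsymbol{w}^\tau\|.
\end{align*}
After Young's inequality, every forcing contribution integrates in time to at most $C\tau^4(1+\int_0^T\|\U^*\|^2+\|\U\|^2)$. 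Gronwall's inequality, with the exponent controlled by \eqref{eqn-main-energy}, then yields the $O(\tau^4)$ bound. This gives both limits in \eqref{lim}, uniformly in $t$.

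To pass from this to Fr\'echet differentiability, we will repeat the argument with $\tau\U$ replaced by an arbitrary increment $\boldsymbol{h}$. Lemma \ref{lem3.7} is linear in $\tau\U$, so it gives $\|\widetilde{\bu}\|\le C\|\boldsymbol{h}\|_{\mathrm{L}^2(0,T;\H)}$ with $C$ uniform for $\|\boldsymbol{h}\|\le1$. The same estimate then yields a remainder of size $O(\|\boldsymbol{h}\|^2)$. The map $\boldsymbol{h}\mapsto\boldsymbol{\eta}$ is linear and bounded by the energy estimate of Theorem \ref{linear}.

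For tangency we take the $\H$-inner product of \eqref{4a} with $\bu:=\bu_{\U^*}$, and pair the equation for $\bu$ with $\boldsymbol{\eta}$. This is legitimate because $\bu$ is a strong solution and $\boldsymbol{\eta}\in\mathrm{L}^2(0,T;\V)$, $\partial_t\boldsymbol{\eta}\in\mathrm{L}^2(0,T;\V')$. Set $g=(\boldsymbol{\eta},\bu)$. We expect the $\A$ terms to cancel, the forcing term $(\U-(\U,\bu)\bu,\bu)$ to vanish since $\|\bu\|=1$, and the term $2\nu(\nabla\bu,\nabla\boldsymbol{\eta})$ to cancel against $\nu\|\nabla\bu\|^2$ contributions together with $-\nu(\A\bu,\boldsymbol{\eta})$. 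The $\B$ terms should combine as $-\langle\B(\bu,\boldsymbol{\eta})+\B(\boldsymbol{\eta},\bu),\bu\rangle-\langle\B(\bu,\bu),\boldsymbol{\eta}\rangle=0$ by antisymmetry. What remains should be a scalar linear ODE $g'=(2\nu\|\nabla\bu\|^2-2(\U^*,\bu))g$ with $g(0)=0$, hence $g\equiv0$. I expect this bookkeeping of the cancellations to be the main obstacle, in particular checking that no inhomogeneous term survives. If one does survive, I would instead deduce tangency from the limit in \eqref{lim}: $0=\|\bu_{\U^*+\tau\U}\|^2-\|\bu_{\U^*}\|^2=2\tau(\boldsymbol{\eta},\bu)+o(\tau)$, which forces $(\boldsymbol{\eta}(t),\bu(t))=0$ for every $t$.
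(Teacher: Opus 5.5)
Your proof is correct and follows the paper's overall structure. You use the remainder equation for $\boldsymbol{w}^\tau$ (your $R_1,R_2,R_3$ agree with the remainders in \eqref{4b}), an $\H$-energy estimate closed by Gr\"onwall, and the scalar ODE $\varphi'=2[\nu\|\nabla\bu_{\U^*}\|_{\H}^2-(\U^*,\bu_{\U^*})]\varphi$ for $\varphi=(\boldsymbol{\eta},\bu_{\U^*})$.

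The real difference is the estimate used to close. The paper bounds $\nu\|\nabla\widetilde{\bu}\|_{\H}^2(\bu_{\U^*},\boldsymbol{w}^\tau)$ by $\nu\|\nabla\widetilde{\bu}\|_{\H}^2\|\boldsymbol{w}^\tau\|_{\H}$, keeping it linear in $\|\boldsymbol{w}^\tau\|_{\H}$. It then applies a nonlinear (square-root type) Gr\"onwall inequality, so it needs only the $\H$-level bound \eqref{eqn-difference-8}, i.e.\ $\sup_t\|\widetilde{\bu}\|_{\H}^2+\nu\int_0^T\|\nabla\widetilde{\bu}\|_{\H}^2\,dt=O(\tau^2)$. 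You use the stronger $\V$-level bound \eqref{325a}, after which Young's inequality and the ordinary Gr\"onwall lemma suffice. This is exactly the alternative noted in the remark after the paper's proof.

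What each route buys:
\begin{itemize}
\item Your version is more elementary at the Gr\"onwall step.
\item The paper's needs less about $\widetilde{\bu}$. This matters because the derivation of \eqref{325a} in Lemma \ref{lem3.7} discards $(\B(\bu_{\U^*+\tau\U},\widetilde{\bu}),\A\widetilde{\bu})$ as if it were zero, which is false for distinct arguments. The lemma still holds, since that term is at most $\frac{\nu}{8}\|\A\widetilde{\bu}\|_{\H}^2+\frac{C}{\nu}\|\A\bu_{\U^*+\tau\U}\|_{\H}\|\nabla\widetilde{\bu}\|_{\H}^2$ by Agmon's inequality and $\|\bu_{\U^*+\tau\U}\|_{\H}=1$.
\item You make the $O(\|\boldsymbol{h}\|^2)$ remainder bound uniform over increments $\|\boldsymbol{h}\|_{\mathrm{L}^2(0,T;\H)}\le 1$ (Lemma \ref{lem3.7} with $\tau=1$, $\U=\boldsymbol{h}$). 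That uniformity is what actually gives Fr\'echet rather than directional differentiability. The paper only verifies \eqref{lim} for fixed $\U$ and states that this suffices.
\item Your fallback $0=\|\bu_{\U^*+\tau\U}(t)\|_{\H}^2-\|\bu_{\U^*}(t)\|_{\H}^2=2\tau(\boldsymbol{\eta}(t),\bu_{\U^*}(t))+o(\tau)$ is a valid one-line proof of tangency.
\end{itemize}

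You do not need the fallback, because the ODE closes with no inhomogeneous term:
\begin{itemize}
\item The two $\A$-terms add up to $-2\nu(\nabla\bu_{\U^*},\nabla\boldsymbol{\eta})$, which is cancelled by $2\nu(\nabla\bu_{\U^*},\nabla\boldsymbol{\eta})\|\bu_{\U^*}\|_{\H}^2$.
\item The two $(\U^*,\boldsymbol{\eta})$ terms cancel, and the $\B$ terms vanish by antisymmetry.
\item The two $\nu\|\nabla\bu_{\U^*}\|_{\H}^2\varphi$ terms and the two $-(\U^*,\bu_{\U^*})\varphi$ terms survive and form the coefficient. So, contrary to your description, they do not cancel against anything.
\end{itemize}

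One small slip: the $\tau\U$ terms of $R_3$ only satisfy $|\tau(\U,\widetilde{\bu})(\bu_{\U^*+\tau\U},\boldsymbol{w}^\tau)|+|\tau(\U,\bu_{\U^*})(\widetilde{\bu},\boldsymbol{w}^\tau)|\le 2\tau\|\U\|_{\H}\|\widetilde{\bu}\|_{\H}\|\boldsymbol{w}^\tau\|_{\H}$, with $\|\widetilde{\bu}\|_{\H}$ rather than $\|\widetilde{\bu}\|_{\H}^2$. Since $\|\widetilde{\bu}\|_{\H}=O(\tau)$, this is still $O(\tau^2)\|\U\|_{\H}\|\boldsymbol{w}^\tau\|_{\H}$ and integrates to $O(\tau^4)$ after Young's inequality, so your conclusion is unaffected.
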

	
	\begin{proof}
	To prove that the mapping
$	\U^* \mapsto \bu_{\U^*}$
	from $\mathscr{U}_{\mathrm{ad}}$ into $\mathrm{C}([0,T]; \H) \cap \mathrm{L}^2(0,T; \V)$ is Fr\'echet differentiable, it suffices to verify condition \eqref{lim}.
	Define
	$\y := \bu_{\U^* + \tau \U} - \bu_{\U^*} - \tau \boldsymbol{\eta}.$
	Then the function $\y(\cdot)$ satisfies the following system for a.e. $t\in[0,T]$:
		\begin{equation}\label{4b}
			\left\{
			\begin{aligned}
			\frac{\d \y(t)}{\d t}&=-\nu\A\y(t)+\nu\|\nabla\bu_{\U^*}(t)\|_{\H}^2\y(t)+\nu\left(\|\nabla\bu_{\U^*+\tau\U}(t)\|_{\H}^2-\|\nabla\bu_{\U^*}(t)\|_{\H}^2\right)\widetilde{\bu}(t)\\&\quad+\nu\left[\left(\|\nabla\bu_{\U^*+\tau\U}(t)\|_{\H}^2-\|\nabla\bu_{\U^*}(t)\|_{\H}^2\right)-2(\nabla\bu_{\U^*}(t),\nabla\widetilde{\bu}(t))\right]\bu_{\U^*}(t)\\&\quad+2\nu(\nabla\bu_{\U^*}(t),\nabla\y(t))\bu_{\U^*}(t)-\B'(\boldsymbol{u}_{\U^*}(t))\y(t)\\&\quad-\left[\B(\bu_{\U^*+\tau\U}(t))-\B(\bu_{\U^*}(t))-\B'(\bu_{\U^*}(t))\widetilde{\bu}(t)\right]\\&\quad-\tau(\U(t),\widetilde{\bu}(t))\bu_{\U^*+\tau\U}(t)-\tau(\U(t),\bu_{\U^*}(t))\widetilde{\bu}(t)-(\U^*(t),\bu_{\U^*}(t))\y(t)\\&\quad-(\U^*(t),\y(t))\bu_{\U^*}(t)-(\U^*(t),\widetilde{\bu}(t))\widetilde{\bu}(t), \\
				\y(0)&=\boldsymbol{0},
			\end{aligned}
			\right.
		\end{equation}
		in $\V'$, where $\widetilde{\bu}=\bu_{\U^*+\tau\U}-\bu_{\U^*}$.  Note that 
		\begin{align*}
			&\B(\bu_{\U^*+\tau\U})-\B(\bu_{\U^*})-\B'(\bu_{\U^*})(\bu_{\U^*+\tau\U}-\bu_{\U^*})\nonumber\\&=\B(\bu_{\U^*+\tau\U},\bu_{\U^*+\tau\U})-\B(\bu_{\U^*},\bu_{\U^*})\nonumber\\&\quad-\B(\bu_{\U^*+\tau\U}-\bu_{\U^*},\bu_{\U^*})-\B(\bu_{\U^*},\bu_{\U^*+\tau\U}-\bu_{\U^*})\nonumber\\&=\B(\bu_{\U^*+\tau\U}-\bu_{\U^*},\bu_{\U^*+\tau\U})-\B(\bu_{\U^*+\tau\U}-\bu_{\U^*},\bu_{\U^*})\nonumber\\&=\B(\bu_{\U^*+\tau\U}-\bu_{\U^*},\bu_{\U^*+\tau\U}-\bu_{\U^*})=\B(\widetilde{\bu},\widetilde{\bu}).
		\end{align*}
		Taking the inner product with $\y(\cdot)$ to the first equation in \eqref{4b}, we find for a.e. $t\in[0,T]$
		\begin{align}\label{3a27}
			&\frac{1}{2}\frac{\d}{\d t}\|\y(t)\|_{\H}^2+\nu\|\nabla\y(t)\|_{\H}^2\nonumber\\&=\nu\|\nabla\bu_{\U^*}(t)\|_{\H}^2\|\y(t)\|_{\H}^2+\nu\left(\|\nabla\bu_{\U^*+\tau\U}(t)\|_{\H}^2-\|\nabla\bu_{\U^*}(t)\|_{\H}^2\right)(\widetilde{\bu}(t),\y(t))
			\nonumber\\&\quad+\nu\left[\left(\|\nabla\bu_{\U^*+\tau\U}(t)\|_{\H}^2-\|\nabla\bu_{\U^*}(t)\|_{\H}^2\right)-2(\nabla\bu_{\U^*}(t),\nabla\widetilde{\bu}(t))\right](\bu_{\U^*}(t),\y(t))\nonumber\\&\quad+2\nu(\nabla\bu_{\U^*}(t),\nabla\y(t))(\bu_{\U^*}(t),\y(t))
			-\langle\B(\y(t),\bu_{\U^*}(t)),\y(t)\rangle\nonumber\\&\quad-\langle\B(\widetilde{\bu}(t),\widetilde{\bu}(t)),\y(t)\rangle -\tau(\U(t),\widetilde{\bu}(t))(\bu_{\U^*+\tau\U}(t),\y(t))\nonumber\\&\quad-\tau(\U(t),\bu_{\U^*}(t))(\widetilde{\bu}(t),\y(t))-(\U^*(t),\bu_{\U^*}(t))\|\y(t)\|_{\H}^2\nonumber\\&\quad-(\U^*(t),\y(t))(\bu_{\U^*}(t),\y(t))-(\U^*(t),\widetilde{\bu}(t))(\widetilde{\bu}(t),\y(t)),
		\end{align}
		where we have used the fact that $\langle\B(\bu_{\U^*},\y),\y\rangle=0$.	Using the Cauchy-Schwarz, triangle and Young  inequalities, we estimate the term $\nu\left(\|\nabla\bu_{\U^*+\tau\U}\|_{\H}^2-\|\nabla\bu_{\U^*}\|_{\H}^2\right)(\widetilde{\bu},\y)$ as
		\begin{align}\label{eqn-est-10}
			&	\nu\left(\|\nabla\bu_{\U^*+\tau\U}\|_{\H}^2-\|\nabla\bu_{\U^*}\|_{\H}^2\right)(\widetilde{\bu},\y)\nonumber\\&\leq\nu\left(\|\nabla\bu_{\U^*+\tau\U}\|_{\H}+\|\nabla\bu_{\U^*}\|_{\H}\right)\|\nabla(\bu_{\U^*+\tau\U}-\bu_{\U^*})\|_{\H}\|\widetilde{\bu}\|_{\H}\|\y\|_{\H}\nonumber\\&\leq \frac{\nu}{2}\|\widetilde{\bu}\|_{\H}^2\|\nabla\widetilde{\bu}\|_{\H}^2+\frac{\nu}{2}\left(\|\nabla\bu_{\U^*+\tau\U}\|_{\H}+\|\nabla\bu_{\U^*}\|_{\H}\right)^2\|\y\|_{\H}^2. 
		\end{align}
		Using the fact that 
		$\|\bu+\boldsymbol{h}\|_{\H}^2-\|\bu\|_{\H}^2-2(\bu,\boldsymbol{h})=\|\boldsymbol{h}\|_{\H}^2,$ for all $\bu,\boldsymbol{h}\in\H$, we estimate the term  $\nu\left[\left(\|\nabla\bu_{\U^*+\tau\U}\|_{\H}^2-\|\nabla\bu_{\U^*}\|_{\H}^2\right)-2(\nabla\bu_{\U^*},\nabla\widetilde{\bu})\right](\bu_{\U^*},\y)$ 
		by  applying  the Cauchy-Schwarz and Young inequalities as 
		\begin{align}\label{eqn-difference-20}
			&\nu\left[\left(\|\nabla\bu_{\U^*+\tau\U}\|_{\H}^2-\|\nabla\bu_{\U^*}\|_{\H}^2\right)-2(\nabla\bu_{\U^*},\nabla\widetilde{\bu})\right](\bu_{\U^*},\y)\nonumber\\&= \nu\|\nabla\widetilde{\bu}\|_{\H}^2(\bu_{\U^*},\y)\leq \nu\|\nabla\widetilde{\bu}\|_{\H}^2\|\y\|_{\H},
		\end{align}
		where we have used the fact that $\bu_{\U^*}(t)\in\mathcal{M}$ for all $t\in[0,T]$. Using the Cauchy-Schwarz and Young inequalities, we estimate the term $2\nu(\nabla\bu_{\U^*},\nabla\y)(\bu_{\U^*},\y)$  as 
		\begin{align}
			2\nu(\nabla\bu_{\U^*},\nabla\y)(\bu_{\U^*},\y)&\leq 2\nu\|\nabla\bu_{\U^*}\|_{\H}\|\nabla\y\|_{\H}\|\y\|_{\H}\leq\frac{\nu}{4}\|\nabla\y\|_{\H}^2+\frac{1}{\nu}\|\nabla\bu_{\U^*}\|_{\H}^2\|\y\|_{\H}^2. 
		\end{align}
		Using  H\"older's, Ladyzhenskaya's and Young's inequalities, we estimate $-\langle\B(\y,\bu_{\U^*}),\y\rangle$ as
		\begin{align}\label{3a28}
			-\langle\B(\y,\bu_{\U^*}),\y\rangle&\leq\|\nabla\bu_{\U^*}\|_{\H}\|\y\|_{\mathbb{L}^4}^2\leq\sqrt{2}\|\nabla\bu_{\U^*}\|_{\H}\|\y\|_{\H}\|\nabla\y\|_{\H}\nonumber\\&\leq\frac{\nu}{4}\|\nabla\y\|_{\H}^2+\frac{2}{\nu}\|\nabla\bu_{\U^*}\|_{\H}^2\|\y\|_{\H}^2.
		\end{align}
		A similar calculation yields
		\begin{align}
			-\langle\B(\widetilde{\bu},\widetilde{\bu}),\y\rangle\leq\|\nabla\y\|_{\H}\|\widetilde{\bu}\|_{\mathbb{L}^4}^2\leq\frac{\nu}{4}\|\y\|_{\V}^2+\frac{2}{\nu}\|\widetilde{\bu}\|_{\mathbb{L}^4}^4.
		\end{align}
		Using the Cauchy-Schwarz and Young inequalities, and applying  the fact that $\bu_{\U^*+\tau\U},\bu_{\U^*}\in\mathcal{M}$, we estimate 
		\begin{equation}\label{eqn-est-11}
			\begin{aligned}
				-\tau(\U,\widetilde{\bu})(\bu_{\U^*+\tau\U},\y)&\leq\tau\|\U\|_{\H}\|\widetilde{\bu}\|_{\H}\|\y\|_{\H}\leq\frac{\tau^2}{4}\|\widetilde{\bu}\|_{\H}^2+\|\U\|_{\H}^2\|\y\|_{\H}^2, \\
				-\tau(\U,\bu_{\U^*})(\widetilde{\bu},\y)&\leq\tau\|\U\|_{\H}\|\widetilde{\bu}\|_{\H}\|\y\|_{\H}\leq\frac{\tau^2}{4}\|\widetilde{\bu}\|_{\H}^2+\|\U\|_{\H}^2\|\y\|_{\H}^2, \\-(\U^*,\bu_{\U^*})\|\y\|_{\H}^2&\leq\|\U^*\|_{\H}\|\y\|_{\H}^2,\\
				-(\U^*,\y)(\bu_{\U^*},\y)&\leq\|\U^*\|_{\H}\|\y\|_{\H}^2,\\
				-(\U^*,\widetilde{\bu})(\widetilde{\bu},\y)&\leq\|\U^*\|_{\H}\|\widetilde{\bu}\|_{\H}^2\|\y\|_{\H}\leq\frac{1}{2}\|\widetilde{\bu}\|_{\H}^4+\frac{1}{2}\|\U^*\|_{\H}^2\|\y\|_{\H}^2.
			\end{aligned}
		\end{equation}
		
		Combining \eqref{eqn-est-10}-\eqref{eqn-est-11}, substituting it in \eqref{3a27}  and then integrating from $0$ to $t$, we obtain
		\begin{align}\label{3a31}
			&\|\y(t)\|_{\H}^2+\frac{\nu}{2}\int_0^t\|\nabla\y(s)\|_{\H}^2\d s\nonumber\\&\leq \nu\int_0^t\|\widetilde{\bu}(s)\|_{\H}^2\|\nabla\widetilde{\bu}(s)\|_{\H}^2\d s+\frac{4}{\nu}\int_0^t\|\widetilde{\bu}(s)\|_{\mathbb{L}^4}^4\d s+\tau^2\int_0^t\|\widetilde{\bu}(s)\|_{\H}^2\d s+\int_0^t\|\widetilde{\bu}(s)\|_{\H}^4\d s\nonumber\\&\quad+2\nu\int_0^t\|\nabla\widetilde{\bu}(s)\|_{\H}^2\|\y(s)\|_{\H}\d s +\int_0^t\Big[\nu\left(\|\nabla\bu_{\U^*+\tau\U}\|_{\H}(s)+\|\nabla\bu_{\U^*}(s)\|_{\H}\right)^2\nonumber\\&\qquad+\frac{6}{\nu}\|\nabla\bu_{\U^*}(s)\|_{\H}^2+\left(4\|\U(s)\|_{\H}^2+4\|\U^*(s)\|_{\H}+\|\U^*(s)\|_{\H}^2\right)\Big]\|\y(s)\|_{\H}^2\d s.
		\end{align}
		By defining $\eta(t)=\|\y(t)\|_{\H}^2+\frac{\nu}{2}\int_0^t\|\nabla\y(s)\|_{\V}^2\d s$ with $\alpha=\frac{1}{2}$, 	an application of the nonlinear generalization of Gr\"onwall inequality (\cite[Theorem 21]{SSD-2003}) in \eqref{3a31} yields
		\begin{align}\label{3a32}
			&\|\y(t)\|_{\H}^2+\frac{\nu}{2}\int_0^t\|\nabla\y(s)\|_{\V}^2\d s\nonumber\\&\leq \Bigg\{\bigg[\nu\int_0^T\|\widetilde{\bu}(t)\|_{\H}^2\|\nabla\widetilde{\bu}(t)\|_{\H}^2\d t+\frac{4}{\nu}\int_0^T\|\widetilde{\bu}(t)\|_{\mathbb{L}^4}^4\d t+\tau^2\int_0^T\|\widetilde{\bu}(t)\|_{\H}^2\d t+\int_0^T\|\widetilde{\bu}(t)\|_{\H}^4\d t\bigg]^{1/2}\nonumber\\&\quad+\nu\int_0^T\|\nabla\widetilde{\bu}(t)\|_{\H}^2\d t\Bigg\}^2\exp\left(\int_0^TA(t)\d t\right)
			\nonumber\\&\leq \Bigg\{2\bigg[\left(\nu+\frac{8}{\nu}\right)\sup_{t\in[0,T]}\|\widetilde{\bu}(t)\|_{\H}^2\int_0^T\|\nabla\widetilde{\bu}(t)\|_{\H}^2\d t+\tau^2T\sup_{t\in[0,T]}\|\widetilde{\bu}(t)\|_{\H}^2+T\sup_{t\in[0,T]}\|\widetilde{\bu}(t)\|_{\H}^4\bigg]\nonumber\\& \quad+2\nu^2\left(\int_0^T\|\nabla\widetilde{\bu}(t)\|_{\H}^2\d t\right)^2\Bigg\}\exp\left(\int_0^TA(t)\d t\right),
		\end{align}
		for all $t\in[0,T]$, where $$A=2\nu^2\|\nabla\bu_{\U^*+\tau\U}\|_{\H}^2+\left(2\nu^2+\frac{6}{\nu}\right)\|\nabla\bu_{\U^*}\|_{\H}^2+\left(4\|\U\|_{\H}^2+4\|\U^*\|_{\H}+\|\U^*\|_{\H}^2\right).$$ An application of \eqref{eqn-difference-9} and \eqref{eqn-difference-11} guarantee that $\int_0^TA(t)dt<\infty$. Passing $\tau\to 0$ in \eqref{3a32}  and using \eqref{eqn-difference-8} yield the required result \eqref{lim}.

		Let us now establish  that  $\boldsymbol{\eta}(t)\in T_{\bu_{\U^*}(t)}\mathcal{M}:=\left\{\w(t)\in\H:(\bu_{\U^*}(t),\w(t))=0\right\},$ for all $t\in[0,T],$ that is,
	$
			(\boldsymbol{\eta}(t),\bu_{\U^*}(t))=0\ \text{ for all }\ t\in[0,T]. 
	$ Since $\bu_{\U^*}\in \mathrm{C}([0,T];\V)$ and $\boldsymbol{\eta}\in\mathrm{C}([0,T];\H)$, we can choose these as test functions in the weak formulations of $\boldsymbol{\eta}$ and  $\bu_{\U^*},$ respectively. We define $\varphi(t)=(\boldsymbol{\eta}(t),\bu_{\U^*}(t))$ and consider  (\cite[Theoem 3.4]{Barbu-1993})
	\begin{align*}
		\frac{\d}{\d t}\varphi(t)&=\left\langle\partial_t\boldsymbol{\eta}(t),\bu_{\U^*}(t)\right\rangle+ \left(\boldsymbol{\eta}(t),\partial_t\bu_{\U^*}(t)\right)\nonumber\\&=\langle  -\nu\A\boldsymbol{\eta}(t)+\nu\|\nabla\bu_{\U^*}(t)\|_{\H}^2\boldsymbol{\eta}(t)+2\nu(\nabla\bu_{\U^*}(t),\nabla\boldsymbol{\eta}(t))\bu_{\U^*}(t)\\&\quad-\B'(\bu_{\U^*}(t))\boldsymbol{\eta}(t)+\U(t)-(\U(t),\bu_{\U^*}(t))\bu_{\U^*}(t)\\&\quad-[(\U^*(t),\bu_{\U^*}(t))\boldsymbol{\eta}(t)+(\U^*(t),\boldsymbol{\eta}(t))\bu_{\U^*}(t)], \bu_{\U^*}(t)\rangle\nonumber\\&\quad+(-\nu\A\bu_{\U^*}(t)+\nu\|\nabla\bu_{\U^*}(t)\|_{\H}^2\bu_{\U^*}(t)-\B(\bu_{\U^*}(t),\bu_{\U^*}(t))\nonumber\\&\quad+\U^*(t)-(\U^*(t),\bu_{\U^*}(t))\bu_{\U^*}(t), \boldsymbol{\eta}(t))
		\nonumber\\&=-2\nu(\nabla \boldsymbol{\eta}(t),\nabla \bu_{\U^*}(t))+2\nu \|\nabla\bu_{\U^*}(t)\|_{\H}^2 (\boldsymbol{\eta}(t), \bu_{\U^*}(t))\nonumber\\&\quad +2\nu(\nabla\bu_{\U^*}(t),\nabla\boldsymbol{\eta}(t))\|\bu_{\U^*}(t)\|_{\H}^2 \nonumber\\&\quad-\langle\B(\bu_{\U^*}(t),\boldsymbol{\eta}(t)),\bu_{\U^*}(t)\rangle-\langle\B(\bu_{\U^*}(t),\bu_{\U^*}(t)), \boldsymbol{\eta}(t) \rangle\nonumber\\&\quad+(\U(t),\bu_{\U^*}(t))-(\U(t),\bu_{\U^*}(t))\|\bu_{\U^*}(t)\|_{\H}^2
		\nonumber\\&\quad-2(\U^*(t),\bu_{\U^*}(t))(\bu_{\U^*}(t),\boldsymbol{\eta}(t))-(\U^*(t),\boldsymbol{\eta}(t))\|\bu_{\U^*}(t)\|_{\H}^2+(\U^*(t),\boldsymbol{\eta}(t))
		\nonumber\\&= 2\left[\nu \|\nabla\bu_{\U^*}(t)\|_{\H}^2 -(\U^*(t),\bu_{\U^*}(t))\right]\varphi(t),
	\end{align*}
	for a.e. $t\in[0,T]$, where we have used the fact that  $\bu_{\U^*}(t)\in\mathcal{M}$ for all $t\in[0,T]$. An application of the variation of constants formula yields 
	\begin{align}
		\varphi(t)=\varphi(0)\exp\left\{\int_0^t2\left[\nu \|\nabla\bu_{\U^*}(s)\|_{\H}^2 -(\U^*(s),\bu_{\U^*}(s))\right]\d s\right\},
	\end{align}
	for all $t\in[0,T]$. Since $\varphi(0)= (\boldsymbol{\eta}(0),\bu_{\U^*}(0))=0$ and 
	\begin{align*}
		\int_0^T\left[\nu \|\nabla\bu_{\U^*}(t)\|_{\H}^2 -(\U^*(t),\bu_{\U^*}(t))\right]\d t\leq \nu\int_0^T\|\nabla\bu_{\U^*}(t)\|_{\H}^2\d t+\int_0^T\|\U^*(t)\|_{\H}\d t<\infty, 
	\end{align*}
	we immediately have $\varphi(t)=(\boldsymbol{\eta}(t),\bu_{\U^*}(t))=0$ for all $t\in[0,T]$. 
	\end{proof}

	\begin{remark}
		The use of the nonlinear extension of Gr\"onwall's inequality allows us to avoid relying on the higher-order continuous dependence estimate given in \eqref{325a}. One can use Young's inequality to further estimate $\nu\left[\left(\|\nabla\bu_{\U^*+\tau\U}\|_{\H}^2-\|\nabla\bu_{\U^*}\|_{\H}^2\right)-2(\nabla\bu_{\U^*},\nabla\widetilde{\bu})\right](\bu_{\U^*},\y)$ given in \eqref{eqn-difference-11} as 
		\begin{align}
			\nu\left[\left(\|\nabla\bu_{\U^*+\tau\U}\|_{\H}^2-\|\nabla\bu_{\U^*}\|_{\H}^2\right)-2(\nabla\bu_{\U^*},\nabla\widetilde{\bu})\right](\bu_{\U^*},\y)\leq \frac{\nu}{2}\|\nabla\widetilde{\bu}\|_{\H}^4+\frac{\nu}{2}\|\y\|_{\H}^2. 
		\end{align}
		Substituting the above estimate into \eqref{3a31} and applying the standard Gr\"onwall inequality, we obtain the desired result by invoking the continuous dependence estimate given in \eqref{325a}.
	\end{remark}
	
We now proceed to prove our main result, Theorem \ref{main}, by invoking Lemmas \ref{lem3.7} and \ref{lem3.8}.
	\begin{proof}[Proof of Theorem \ref{main}]
		Let $(\bu_{\U^*}, \U^*) \in \mathscr{A}_{\mathrm{ad}}$ denote the optimal pair corresponding to the control problem \eqref{control problem}, as established in Theorem \ref{thm-exis-optimal}.
		Define the functional $\mathscr{G}(\U) = \mathscr{J}(\bu_{\U}, \U)$, where $(\bu_{\U}, \U)$ represents the unique weak solution of the controlled system \eqref{eqn-constrained-NSE-N} associated with the control $\U \in \mathscr{U}_{\mathrm{ad}}$.
		Assume that $\tau > 0$ is sufficiently small so that $\U^* + \tau \U \in \mathscr{U}_{\mathrm{ad}}$ and $(\bu_{\U^* + \tau \U}, \U^* + \tau \U) \in \mathscr{A}_{\mathrm{ad}}$.
		Under these conditions, the following relation holds (cf. \cite{MTM-2022}):
		\begin{align}\label{lam}
			&	\mathscr{G}(\U^*+ \tau \U) - \mathscr{G}(\U^*)\nonumber \\&= \mathscr{J}(\bu_{\U^* + \tau \U},{\U^* + \tau \U})- \mathscr{J}(\bu_{\U^*},{\U^*}) 
			\nonumber	\\ &= \frac{1}{2}\int_0^T  \|\bu_{\U^*+\tau \U} (t)- \bu_{\U^*}(t)\|^2_{\H} \d t+ \int_0^T (\bu_{\U^*+\tau \U} (t)- \bu_{\U^*}(t),\bu_{\U^*}(t) -\bu_d(t))\d t\nonumber\\&\quad+\frac{1}{2}\int_0^T\|\bu_{\U^*+\tau \U} (t)- \bu_{\U^*}(t)\|^2_{\V} \d t-\int_0^T\langle\bu_{\U^*+\tau \U} (t)+\bu_{\U^*}(t),\A\bu_{\U^*}(t)\rangle\d t\nonumber\\&\quad+ \int_0^T  \left[\tau(\U^*(t),\U(t))+\frac{\tau^2}{2}\|\U(t)\|_{\H}^2\right]\d t\nonumber\\&\quad +\frac{1}{2}\|\bu_{\U^*+\tau \U} (T)- \bu_{\U^*}(T)\|^2_{\H}+(\bu_{\U^* + \tau \U}(T)-\bu_{\U^* }(T),\bu_{\U^*}(T)-\bu^f) .
		\end{align}
		Using estimate \eqref{325a} (see Lemma \ref{lem3.7}), we know that both $\|\bu_{\U^*+\tau \U}(t) - \bu_{\U^*}(t)\|^2_{\H}$ for all $t \in [0,T]$ and $\|\bu_{\U^*+\tau \U} - \bu_{\U^*}\|^2_{\mathrm{L}^2(0,T;\V)}$ can be bounded by $C\tau^2 |\U|^2_{\mathrm{L}^2(0,T;\H)}$.
		Consequently, dividing by $\tau$ and letting $\tau \downarrow 0$, we deduce that
		$\|\bu_{\U^*+\tau \U} - \bu_{\U^*}\|_{\mathrm{L}^2(0,T;\H)} \to 0, \
		\|\bu_{\U^*+\tau \U} - \bu_{\U^*}\|_{\mathrm{L}^2(0,T;\V)} \to 0, \
		\text{and} \quad
		\|\bu_{\U^*+\tau \U}(T) - \bu_{\U^*}(T)\|_{\H} \to 0
		$
		as $\tau \downarrow 0$.
		We denote by $\mathscr{G}^{\prime}(\U^*, \U)$ the \emph{directional derivative} of $\mathscr{G}$ at $\U^*$ in the direction $\U$.	Let $\boldsymbol{\eta}(\cdot)$ be the solution to the linearized system \eqref{4a}.
		From Lemma \ref{lem3.8}, we also have the convergence stated in \eqref{lim}.
		Hence, dividing (\ref{lam}) by $\tau$, letting $\tau \downarrow 0$ and performing integration by parts (\cite[Theoem 3.4]{Barbu-1993}), we obtain the following result:
		\begin{align}\label{329}
			0 &\leq \mathscr{G}'(\U^*(t),\U(t)) \nonumber\\&= \underset{\tau \downarrow 0}{\lim} \frac{\mathscr{G}(\U^*(t)+ \tau \U(t))- \mathscr{G}(\U^*(t))}{\tau} \nonumber\\
			&= \int_0^T  \langle\boldsymbol{\eta}(t),\bu_{\U^*}(t) -\bu_d(t)+\A\bu_{\U^*}(t)\rangle\d t \nonumber\\&\quad+ \int_0^T(\U^*(t),\U(t)) \d t
			+(\boldsymbol{\eta}(T),\bu_{\U^*}(T)-\bu^f)	\nonumber\\&= \int_0^T \big \langle \boldsymbol{\eta}(t),-\partial\p_t(t)+\nu\A\p(t)-\nu\|\nabla\bu_{\U^*}(t)\|_{\H}^2\p(t)-2\nu(\bu_{\U^*}(t),\p(t))\A\bu_{\U^*}(t)\nonumber\\&\qquad+(\B'(\bu_{\U^*}(t)))^*\p (t)+\left[(\U^*(t),\bu_{\U^*}(t))\p(t)+\U^*(t)(\bu_{\U^*}(t),\p(t))\right]\big\rangle\d t\nonumber\\&\quad+ \int_0^T(\U^*(t),\U(t)) \d t+(\boldsymbol{\eta}(T),\bu_{\U^*}(T)-\bu^f)\nonumber\\&= \int_0^T\big\langle\partial_t\boldsymbol{\eta}(t)+\nu\A\boldsymbol{\eta}(t)-\nu\|\nabla\bu_{\U^*}(t)\|_{\H}^2\boldsymbol{\eta}(t)-2\nu(\nabla\bu_{\U^*}(t),\nabla\boldsymbol{\eta}(t))\bu_{\U^*}(t)\nonumber\\&\qquad+\B'(\bu_{\U^*}(t))\boldsymbol{\eta}(t)+[(\U^*(t),\bu_{\U^*}(t))\boldsymbol{\eta}(t)+(\U^*(t),\boldsymbol{\eta}(t))\bu_{\U^*}(t)],\p(t)\big\rangle\d t\nonumber\\&\quad+ \int_0^T(\U^*(t),\U(t)) \d t\nonumber\\&= \int_0^T(\U (t)-(\U(t),\bu_{\U^*}(t))\bu_{\U^*}(t),\p(t))\d t  + \int_0^T(\U^*(t),\U(t)) \d t.
		\end{align}
Therefore, from \eqref{329}, we infer 
		$$0 \leq \mathscr{G}'(\U^*(t), \U(t))= \int_0^T(\U (t)-(\U(t),\bu_{\U^*}(t))\bu_{\U^*}(t),\p(t))\d t  + \int_0^T(\U^*(t),\U(t)) \d t,$$  for all $\U\in\mathrm{L}^2(0,T;\H)$. Hence, from the above relation, it is immediate that
		\begin{equation}\label{3a37}
			\U^*(t)=-\p(t)+(\p(t),\bu_{\U^*}(t))\bu_{\U^*}(t), \ \text{ for a.e. } \ t\in[0,T]\ \text{ in }\ \H,
		\end{equation}
		which completes the proof. Since $\p\in \mathrm{C}([0,T];\H)$, the above relation holds true for all $t\in[0,T]$. 
	\end{proof}	
	
	\begin{remark}
		Since $(\boldsymbol{\eta},\bu_{\U^*})=0$ and  	$
			\U^*=-\p+(\p,\bu_{\U^*})\bu_{\U^*}, $ we find 
			\begin{align}
				(\U^*, \boldsymbol{\eta})=(-\p+(\p,\bu_{\U^*})\bu_{\U^*},\boldsymbol{\eta})=-(\p,\boldsymbol{\eta}). 
			\end{align}
	\end{remark}
	
	\begin{remark}
	Let us consider	the cost functional
		\begin{equation}\label{cost-1}
			\begin{aligned}
				\mathscr{J}(\bu,\U) &:= \frac{1}{2} \int_0^T\|\bu(t)-\bu_d\|_{\H}^2 \d t+\frac{1}{2}\int_0^T\|\nabla\bu(t)\|_{\H}^2\d t+ \int_0^Th(\U(t))\d t\\&\quad+\frac{1}{2}\|\bu(T)-\bu^f\|_{\H}^2,
			\end{aligned}
		\end{equation}
		subject to the constraint  \eqref{eqn-constrained-NSE-N}, under the following assumption on  $h(\cdot)$:
		\begin{hypothesis}\label{hyp}
			The function $h:\H\to(-\infty,\infty)$ is convex and  lower semicontinuous. Moreover, it satisfies the coercivity condition: \begin{align}\label{2a19}h(\U)\geq \kappa_1\|\U\|_{\H}^2+\kappa_2, \ \text{ for all } \ \U\in\mathbb{H}, \end{align} for some $\kappa_1>0$ and $\kappa_2\in\mathbb{R}$.
		\end{hypothesis}
		Then the optimality condition given in \eqref{3.41} becomes 
		\begin{align}
			\partial h(\U^*(t))\in-\p(t)+(\p(t),\bu^*(t))\bu^*(t)=-\pi_{\bu^*}\p(t), \ \text{ for a.e. } \ t\in[0,T]\ \text{ in }\ \H, 
		\end{align}
		where $\partial h(\cdot)$ denotes the subdifferential of $h(\cdot)$. Moreover, for  $\f(t)\in \partial h(\U^*(t))$, we have  $(\f(t),\bu^*(t))=0$ for a.e. $t\in[0,T],$ since $\bu^*(t)\in\mathcal{M}$ for all $t\geq 0$, that is, $\f(t)\in T_{\bu^*(t)}\mathcal{M}$ for a.e. $t\in[0,T].$
	\end{remark}

	\subsection{Conclusion}
	The results of Theorem \ref{main} admit a natural geometric interpretation in terms of the underlying manifold structure of the state space. The control system is constrained to evolve on the manifold $\mathcal{M}=\{\bu\in\H:\|\bu\|_{\H}=1\}\subset\H$, so that admissible perturbations of the state lie in the tangent space $T_{\bu}\mathcal{M}:=\{\w \in \H : (\w,\bu)=0\}$. The optimal control $\U^*(t)$ is obtained by projecting the adjoint variable $\p(t)$ onto this tangent space, yielding the relation $\U^*(t) = -\pi_{\bu^*(t)}(\p(t)) = -\p(t) + (\p(t),\bu^*(t))\bu^*(t)$, which ensures that the control is always tangent to the manifold and thus preserves the state constraint. The optimal control $\U^*(t)$ is then the projection of the adjoint vector $\p(t)$ onto this tangent space, it points in the direction along the sphere where the cost functional decreases most steeply, while never pushing the state off the sphere.  The adjoint variable $\p(t)$ itself encodes the gradient of the cost functional with respect to the state trajectory, and its backward evolution captures the sensitivity of the terminal and running costs along feasible directions. Meanwhile, the linearized variable $\boldsymbol{\eta}(t)$, which solves the linearized system around the optimal trajectory, represents infinitesimal perturbations of the state induced by variations in the control. Geometrically, $\boldsymbol{\eta}(t)$ remains tangent to the manifold $T_{\bu_{\U^*}(t)}\mathcal{M}$ for all $t$, reflecting the fact that only perturbations preserving the state constraint are admissible. Together, these structures form a coherent geometric picture, the optimal control aligns with the adjoint variable projected onto the tangent space, while the linearized state explores feasible directions within the tangent cone, ensuring consistency with the manifold constraint throughout the evolution.

	\medskip
	\noindent
	\textbf{Acknowledgments:}  Support for M. T. Mohan's research received from the National Board of Higher Mathematics (NBHM), Department of Atomic Energy, Government of India (Project No. 02011/13/2025/NBHM(R.P)/R\&D II/1137). 
	
%
%
%
%
%
%
%
	
\bibliographystyle{amsplain}
\bibliography{References}

\end{document}